\newcommand{\s}{\vspace{0.3cm}}
\newcommand{\be}{\begin{equation}}
\newcommand{\ee}{\end{equation}}
\numberwithin{equation}{section}
\begin{document}

\newtheorem{theorem}{Theorem}[section]                  
\newtheorem{proposition}[theorem]{Proposition}                  
\newtheorem{corollary}[theorem]{Corollary}
\newtheorem{lemma}[theorem]{Lemma}
\newtheorem{conjecture}[theorem]{Conjecture} 
\newtheorem{definition}[theorem]{Definition}
\theoremstyle{remark}         
\newtheorem{remark}[theorem]{\bf Remark}
\newtheorem{example}[theorem]{\bf Example}
\newtheorem{question}[theorem]{\bf Question}

\title{Isogenous decomposition of the Jacobian of generalized Fermat curves}

\author[M. Carvacho, R. A. Hidalgo and S. Quispe]{Mariela Carvacho, Rub\'en A. Hidalgo and Sa\'ul Quispe}

\address{Departamento de Matem\'aticas, Universidad T\'ecnica Federico Santa Mar\'{\i}a, Valpara\'{\i}so, Chile}
\email{mariela.carvacho@usm.cl}

\address{Departamento de Matem\'atica, Universidad de la Frontera, Casilla 54-D, Temuco, Chile}
\email{ruben.hidalgo@ufrontera.cl, saul.quispe@ufrontera.cl}

\thanks{Partially supported by projects Fondecyt 1150003 and 3140050}
\keywords{Riemann surfaces, Algebraic curves, Jacobians varieties, Automorphisms}
\subjclass[2000]{30F10, 30F40}

\begin{abstract}
 A closed Riemann surface $S$ is called a generalized Fermat curve of type $(p,n)$, where $p,n \geq 2$ are integers, if it admits a group $H \cong {\mathbb Z}_{p}^{n}$ of conformal automorphisms so that $S/H$ is an orbifold of genus zero with exactly $n+1$ cone points, each one of order $p$. It is known that $S$ is a fiber product of $(n-1)$ classical Fermat curves of degree $p$ and, for $(p-1)(n-1)>2$, that it is a non-hyperelliptic Riemann surface. In this paper, assuming $p$ to be a prime integer, we  provide a decomposition, up to isogeny, of the Jacobian variety $JS$ as a product of Jacobian varieties of certain cyclic $p$-gonal curves.  Explicit equations for these $p$-gonal curves are provided in terms of the equations for $S$. As a consequence of this decomposition, we are able to provide explicit positive-dimensional families of closed Riemann surfaces whose Jacobian variety is isogenous to the product of elliptic curves.

\end{abstract}

\maketitle

%%%%%%%%%%%%%%%%%%
%%%%%%%%%%%%%%%%%%
\section{Introduction}
A principally polarized abelian variety $A$ is simple if there are no lower dimensional principally polarized abelian varieties $B$ and $C$ so that $A$ is isogenous to the product $B \times C$. In the case that $A$ is non-simple, Poincar\'e's complete reducibility theorem \cite{Poincare} asserts the existence of simple principally polarized abelian varieties $A_{1}, \ldots, A_{s}$ and positive integers $n_{1},\ldots,n_{s}$ such that $A$ is isogenous to the product $A_{1}^{n_{1}} \times \cdots A_{s}^{n_{s}}$. Moreover, the $A_{j}$ and $n_{j}$ are unique (up to isogeny) and permutation of the factors (see Section \ref{sec:poincare}). 

A very interesting class of principally polarized abelian varieties are the Jacobian variety $JS$ of a closed Riemann surface $S$ of genus $g \geq 1$ (see Section \ref{sec:jacobian}).  We say that $JS$ is Jacobian-simple if it is non-isogenous to a non-trivial product of Jacobian varieties of of lower genera surfaces; otherwise  we say that it is Jacobian-non-simple. It may be that $JS$ is Jacobian-simple but non-simple as a polarized abelian variety; of course, if it is simple then it is also Jacobian-simple. Given $S$, one would like to know if it is Jacobian-simple or not. It is clear that $JS$ is Jacobian-simple for $g=1$. If $g=2$, then $JS$ is simple if and only if it is Jacobian-simple. As every principally polarized abelian variety of dimension two is either a Jacobian or isogenous to the product of two elliptics, then for $g=3$ again $JS$ is simple if and only if it is Jacobian-simple. The two properties may differ for $g \geq 4$.

In this paper we are interested in decomposing, up to isogeny, the Jacobian variety of a closed Riemann surface as product of Jacobian varieties of lower genera Riemann surfaces (this is a first approach to finding a Jacobian decomposition of a Jacobian variety). 

With respect to the above, Ekedahl and Serre \cite{E-S} constructed examples of closed Riemann surfaces $S$ for which $JS$ is isogenous to the product of elliptic curves. These examples are for genus $g\leq 1297$ with some gaps. More examples of Riemann surfaces $S$ for which $JS$ is isogenous to the product of elliptic curves have been produced by many authors, see for instance \cite{Shaska2,Nakajima,Shaska1,Yamauchi1,Yamauchi2}. Two fundamental question, stated by Ekedahl and Serre in \cite{E-S} are the following. {\it Q1} Is there a Riemann surface $S$ in every genus $g \geq 2$ such that $JS$ is isogenous to the product of elliptic curves?
{\it Q2} Is there a bound on the genus $g$ with the above decomposition property?. To the actual acknowledgment of the authors, both of these questions are still open.

In the case of hyperelliptic Riemann surfaces with a conformal automorphism of order two, different from the hyperelliptic involution, it is known that the Jacobian variety decomposes, up to isogeny, as the product of two Jacobian varieties. We recall this situation in Section \ref{sec:hiper} and we provide a two-dimensional family of genus four hyperelliptic Riemann surfaces whose Jacobian variety is isogenous to the product of four elliptic curves (explicit equations are provided).

Next, we restrict to a certain interesting families of Riemann surfaces called generalized Fermat curves of type $(p,n)$, where $p,n \geq 2$ are integers (see Section \ref{Sec:GFC} for details). These surfaces $S$ can be described as a suitable fiber product of $(n-1)$ classical Fermat curves of degree $p$ and, for $(n-1)(p-1)>2$, they are non-hyperelliptic and they admit a unique group of automorphisms $H \cong {\mathbb Z}_{p}^{n}$ with $S/$ of genus zero and exactly $(n+1)$ cone points; each one of order $p$.

 Our main result is Theorem \ref{(p,n)} which, in the case that $p$ is a prime integer, provides an isogenous decomposition of the Jacobian variety of generalized Fermat curves of type $(p,n)$ as a product of Jacobians of certain cyclic $p$-gonal curves. The equations of these $p$-gonal curves are given explicitly in terms of the equations of the corresponding generalized Fermat curve. Some consequences of the isogenous Jacobian decomposition are the following.
\begin{enumerate}
\item The Jacobian variety of any closed Riemann surface of genus $5$ admitting ${\mathbb Z}_{2}^{4}$ as a group of conformal automorphisms (a two-dimensional family) is isogenous to the product of elliptic curves. 

\item We obtain a one-dimensional family of closed Riemann surfaces of genus $17$ whose Jacobian varieties are isogenous to the product of elliptic curves.

\item We provide an example of a closed Riemann surface of genus $49$ whose Jacobian variety is isogenous to the product of elliptic curves.

\item The Jacobian variety of every closed Riemann surface of genus $10$ admitting a group $H \cong {\mathbb Z}_{3}^{3}$ as a group of conformal automorphisms whose quotient orbifold $S/H$ is the Riemann sphere with exactly three cone points, each one of order three, (a one-dimensional family) is isogenous to the product of elliptic curves.

\item The Jacobian variety of every closed Riemann surface of genus $65$ admitting a group $H \cong {\mathbb Z}_{3}^{4}$ as a group of conformal automorphisms with $S/H$ being of genus zero and having exactly five cone points, each one of order five, (a two-dimensional family) is isogenous to the product of $40$ elliptic curves and five $5$-dimensional Jacobians. 

\end{enumerate}

The facts (1) and (2), for the particular case that $p=2$ and the curves are defined over $\overline{\mathbb Q}$, were obtained by Yamauchi in \cite{Yamauchi1} (in our result we have not such an arithmetical restriction). Our results are also related to those obtained by Shaska in \cite{Shaska2}.  

Finally, we should mention the survey article by Rodr\'{\i}guez \cite{Rod:survey} in which the author review part of the theory of abelian varieties with group actions and the decomposition of them up to isogeny. Somewhere else we will try to describe our decomposition results for generalized Fermat curves in their language.

%%%%%%%%%%%%%%%%%%%
%%%%%%%%%%%%%%%%%%%
\section{Preliminaries}
In this section we recall some definitions and previous results we will need in the rest of the paper.

If $S$ denotes a closed Riemann surface, then ${\rm Aut}(S)$ will denote its full group of conformal automorphisms. If $H<{\rm Aut}(S)$, then we denote by ${\rm Aut}_{H}(S)$ the normalizer of $H$ inside ${\rm Aut}(S)$, that is, the biggest subgroup of ${\rm Aut}(S)$ containing $H$ as normal subgroup. 

\s
%%%%%%%%%%%%%%%%%%%%%%%%%%%%%%%%%
\subsection{Generalized Fermat curves}\label{Sec:GFC}
Let $S$ be a generalized Fermat curve of type $(p,n)$. A group $H<{\rm Aut}(S)$, $H\cong {\mathbb Z}_{p}^{n}$, so that the quotient orbifold $S/H$ has genus zero and exactly $n+1$ cone points of order $p$, is called a generalized Fermat group of type $(p,n)$ for $S$; the pair $(S,H)$ is called a generalized Fermat pair of type $(p,n)$. As a consequence of the Riemnn-Hurwitz formula, a generalized Fermat curve of type $(p,n)$ has genus
$$
g=g_{p,n}=1+\dfrac{\phi(p,n)}{2}, \quad \mbox{where} \quad \phi(p,n)=p^{n-1}((n-1)p-n-1),
$$

We say that two generalized Fermat pairs $(S_{1},H_{1})$ and $(S_{2},H_{2})$ are topologically (holomorphically) equivalent if there is some orientation-preserving homeomorphism (holomorphic homeomorphism) $f:S_{1} \to S_{2}$ so that $f H_{1} f^{-1}=H_{2}$. 

\s
\noindent
\begin{theorem}[\cite{GHL}]\label{N-H}\label{unif}
Any two generalized Fermat pairs of type $(p,n)$ are topologically equivalent. 
\end{theorem}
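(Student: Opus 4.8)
The plan is to prove that any two generalized Fermat pairs of type $(p,n)$ are topologically equivalent by working at the level of the fundamental groups of the quotient orbifolds and then lifting the equivalence through the covering. Since each $S/H$ is a genus-zero orbifold with exactly $n+1$ cone points, all of order $p$, the orbifold fundamental group is in each case the same abstract group, namely the polygonal (orbifold surface) group
\begin{equation}
\Gamma = \left\langle x_{1},\dots,x_{n+1} \;\middle|\; x_{1}^{p}=\dots=x_{n+1}^{p}=x_{1}\cdots x_{n+1}=1\right\rangle,
\end{equation}
which depends only on $(p,n)$ and not on the particular surface or on the positions of the cone points.

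The key observation is that the generalized Fermat group $H\cong\mathbb{Z}_p^n$ corresponds to a surjective homomorphism $\theta\colon\Gamma\to\mathbb{Z}_p^n$ whose kernel is a surface group (torsion-free, so that the covering is unramified over the complement of the branch locus and the cone points lift to genuine branch points of order exactly $p$). First I would show that, up to an automorphism of $\mathbb{Z}_p^n$, this homomorphism is forced: each generator $x_i$ must map to an element of order $p$, and the surjectivity together with the relation $x_1\cdots x_{n+1}=1$ and the requirement that the kernel be torsion-free pins down the images $\theta(x_i)$ uniquely up to the natural $\mathrm{Aut}(\mathbb{Z}_p^n)$-action and the ordering of the cone points. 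This is precisely the combinatorial heart of the statement: one checks that the $(n+1)$ images are, up to an overall linear automorphism, the standard generators $e_1,\dots,e_n,\,-(e_1+\cdots+e_n)$ of $\mathbb{Z}_p^n$, and that any torsion-free-kernel surjection is equivalent to this one.

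Given this rigidity, I would then construct the topological equivalence geometrically. Choose orientation-preserving homeomorphisms between the two base orbifolds $S_1/H_1$ and $S_2/H_2$ sending cone points to cone points (this is elementary since both are spheres with $n+1$ marked points, and the mapping class group of the punctured sphere acts transitively on orderings); such a homeomorphism induces an isomorphism of the orbifold fundamental groups compatible with the peripheral structure. Using the uniqueness of $\theta$ up to $\mathrm{Aut}(\mathbb{Z}_p^n)$ established above, I can arrange this isomorphism to intertwine the two surjections $\theta_1$ and $\theta_2$, hence it descends to an equivariant homeomorphism of the covering surfaces conjugating $H_1$ to $H_2$, which is exactly the desired $f$ with $fH_1f^{-1}=H_2$.

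I expect the main obstacle to be the uniqueness step for the homomorphism $\theta$: one must verify carefully that the condition of having torsion-free kernel (equivalently, that each $x_i$ injects into $\mathbb{Z}_p^n$ and no nontrivial product of powers of the $x_i$ lies in the kernel in a way that creates new torsion) rules out all surjections except the standard one up to automorphism. When $p$ is prime this is a clean linear-algebra argument over the field $\mathbb{F}_p$—the images must be $n+1$ vectors spanning $\mathbb{F}_p^n$, summing to zero, with every proper sub-collection of at most $n$ of them linearly independent—but for general composite $p$ one must argue with the subgroup structure of $\mathbb{Z}_p^n$ directly, and checking that no extra torsion is introduced in the kernel is the delicate point. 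Once the rigidity of $\theta$ is in hand, the lifting of the base homeomorphism to an $H$-equivariant homeomorphism is a standard application of covering space theory for orbifolds.
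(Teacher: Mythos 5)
This statement is imported from \cite{GHL} and the paper offers no proof of its own, so there is nothing internal to compare against; I can only assess your argument on its merits. Your outline is correct and is essentially the standard (and, as far as I know, the cited) argument: uniformize both quotient orbifolds by the same abstract group $\Gamma=\langle x_{1},\dots,x_{n+1}\mid x_{i}^{p}=x_{1}\cdots x_{n+1}=1\rangle$, observe that a generalized Fermat pair corresponds to a surjection $\theta\colon\Gamma\to\mathbb{Z}_{p}^{n}$ with torsion-free kernel, prove rigidity of $\theta$ up to ${\rm Aut}(\mathbb{Z}_{p}^{n})$, and lift a cone-point-preserving homeomorphism of the bases.

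The one place you leave a genuine loose end is the rigidity step for composite $p$, which you flag as delicate but do not close, even though the theorem is stated for all $p\geq 2$. It closes immediately: the abelianization of $\Gamma$ is $\mathbb{Z}_{p}^{n+1}/\langle(1,\dots,1)\rangle\cong\mathbb{Z}_{p}^{n}$, so \emph{any} surjection $\Gamma\to\mathbb{Z}_{p}^{n}$ factors through the abelianization map followed by a surjective, hence bijective, endomorphism of the finite group $\mathbb{Z}_{p}^{n}$. Consequently every such $\theta$ has kernel equal to the commutator subgroup $[\Gamma,\Gamma]$ --- a characteristic subgroup --- and one checks directly that each $x_{i}$ has order exactly $p$ in $\Gamma^{\rm ab}$, so this kernel is automatically torsion-free; no case analysis over the subgroup structure of $\mathbb{Z}_{p}^{n}$ is needed. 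This also simplifies your lifting step: since the kernel is characteristic, \emph{every} isomorphism $\Gamma_{1}\to\Gamma_{2}$ induced by a cone-point-preserving homeomorphism of the bases carries $\ker\theta_{1}$ to $\ker\theta_{2}$, so the lift exists without having to adjust by an automorphism of $\mathbb{Z}_{p}^{n}$. (Your linear-algebra version for prime $p$ --- the images are $n+1$ nonzero vectors spanning $\mathbb{F}_{p}^{n}$ and summing to zero, hence any $n$ of them form a basis --- is also correct, just less economical.) With that patch the proof is complete.
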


\s
\noindent
\begin{theorem}[\cite{GHL}]
If $p$ is a prime integer, then a generalized Fermat curve of type $(p,n)$ has a unique, up to conjugation, generalized Fermat group of the same type $(p,n)$.
\end{theorem}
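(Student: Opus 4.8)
The plan is to combine a Sylow-theoretic reduction with a rigidity analysis of the abelian cover $S \to S/H$, using the topological uniqueness already recorded in Theorem~\ref{N-H}. Write $G = {\rm Aut}(S)$ and let $H, \hat H < G$ be two generalized Fermat groups of type $(p,n)$. Since $|H| = |\hat H| = p^{n}$ and $p$ is prime, both are $p$-subgroups of $G$, hence each lies in some Sylow $p$-subgroup of $G$. Fixing a Sylow $p$-subgroup $P \supseteq H$ and using that all Sylow $p$-subgroups are conjugate, I may replace $\hat H$ by a conjugate and assume $\hat H \subseteq P$ as well. This reduces the statement to showing that $P$ contains a unique generalized Fermat group of type $(p,n)$.

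Next I would make the internal structure of such a group explicit and intrinsic. Because $H$ is abelian, every point stabilizer for the $H$-action is cyclic, and since $p$ is prime each nontrivial stabilizer is a subgroup $H_{j} \cong {\mathbb Z}_{p}$. The hypothesis that $S/H$ has genus zero with exactly $n+1$ cone points of order $p$ forces exactly $n+1$ such subgroups $H_{0}, \dots, H_{n}$, pairwise meeting trivially, with $H = \langle H_{0}, \dots, H_{n}\rangle$ and the single relation coming from $\prod_{j} a_{j} = 1$ in the genus-zero signature $(0; p, \dots, p)$ Fuchsian group realized by Theorem~\ref{N-H}. A Riemann--Hurwitz count shows that each nontrivial element of $H_{j}$ fixes exactly $p^{n-1}$ points and that $S/H_{j}$ is again of type $(p,n-1)$. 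The upshot is that a generalized Fermat group is reconstructed from its collection of stabilizer subgroups, so it suffices to compare the stabilizer data of $H$ and $\hat H$.

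I would then phrase the heart of the argument through the monodromy of the branched cover. Fixing the $n+1$ branch points on $S/H \cong \widehat{\mathbb C}$, the cover is classified by a surjection $\theta \colon \pi_{1}(\widehat{\mathbb C} \setminus \{q_{0},\dots,q_{n}\}) \to {\mathbb Z}_{p}^{n}$ sending each puncture-loop to a nonzero vector $v_{j}$, with $\sum_{j} v_{j} = 0$ and $\langle v_{0},\dots,v_{n}\rangle = {\mathbb Z}_{p}^{n}$. Over ${\mathbb F}_{p}$ any such configuration is equivalent, under ${\rm GL}_{n}({\mathbb F}_{p})$ and relabeling, to the standard one $(e_{1},\dots,e_{n},-\sum_{i} e_{i})$, so for a genus-zero quotient with a prescribed branch set the generalized Fermat group is unique. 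Combined with the previous paragraph, the remaining task is to show that $H$ and $\hat H$ induce, up to conjugacy in $P$, the same genus-zero quotient, i.e. the same configuration of $n+1$ ramification fibers inside $S$.

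The main obstacle is precisely this last point: a priori $S$ could carry two elementary abelian $p$-actions with genus-zero quotients branched over genuinely different configurations, and one must exclude this. Here the prime hypothesis is essential, as it guarantees that the ramification fibers of any generalized Fermat action are exactly the $H_{j}$-fixed point sets, each a cyclic-stabilizer orbit of length $p^{n-1}$, with no composite-order phenomenon able to split or merge fibers. I would argue that the set of points lying in such length-$p^{n-1}$ orbits is determined by the conformal structure of $S$ (via the non-hyperellipticity and the rigidity underlying Theorem~\ref{N-H}), so that $H$ and $\hat H$ share their ramification fibers; the classification above then forces $H = \hat H$ inside $P$, which completes the reduction.
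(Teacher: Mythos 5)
This statement is not proved in the paper at all: it is quoted verbatim from \cite{GHL}, so there is no internal argument to compare yours against; the burden of a ``blind proof'' here is to actually reprove the [GHL] result. Your proposal does not do that, because the step you yourself flag as ``the main obstacle'' is left unresolved, and it is precisely the entire content of the theorem. You assert that the set of points lying in the length-$p^{n-1}$ ramification orbits ``is determined by the conformal structure of $S$ (via the non-hyperellipticity and the rigidity underlying Theorem~\ref{N-H})'', but Theorem~\ref{N-H} only supplies an orientation-preserving \emph{homeomorphism} $f$ with $fHf^{-1}=\hat H$; conjugacy inside ${\rm Aut}(S)$ requires a \emph{conformal} such map, and nothing in your argument upgrades topological equivalence to holomorphic equivalence. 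Non-hyperellipticity plays no role in the way you invoke it. So the proof reduces the theorem to a claim that is equivalent to the theorem and then stops.

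There are secondary problems as well. The Sylow reduction needs ${\rm Aut}(S)$ to be finite, which fails for the non-hyperbolic types $(2,2)$, $(2,3)$, $(3,2)$ covered by the statement; and after placing $H$ and $\hat H$ in a common Sylow $p$-subgroup $P$ you never actually use that containment, so the reduction buys nothing. Finally, even if you could show the two actions have the same ramification locus as a subset of $S$, that alone would not force $H=\hat H$: two order-$p$ automorphisms can in principle share a fixed-point set without generating the same cyclic group, so you would still need to match the local rotation (monodromy) data before your ${\rm GL}_n(\mathbb F_p)$-classification of the surjections $\pi_1\to{\mathbb Z}_p^n$ (which is fine as far as it goes) can be applied. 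The standard route in \cite{GHL} avoids all of this by working with the Fuchsian uniformization, where the surface group of $S$ is identified with a characteristic subgroup (generated by commutators and $p$-th powers) of the signature $(0;p,\dots,p)$ group; some argument of that analytic/group-theoretic kind is what is missing from your sketch.
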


\s
\noindent
\begin{theorem}[\cite{GHL}]\label{cor2}
Let $(S,H)$ be a generalized Fermat pair of type $(p,n)$ and let $P:S \to S/H$ be a branched regular covering with $H$ as group of cover transformations.
\begin{itemize}
\item[1.-] If ${\rm Aut}_{H}(S)$ denotes the normalizer of $H$ inside ${\rm Aut}(S)$, then each orbifold automorphism of $S/H$ lifts to an automorphism in ${\rm Aut}_{H}(S)$; that is, for each orbifold automorphism $\tau:S/H \to S/H$ there is a conformal automorphism $\widehat{\tau}:S \to S$ so that $P \widehat{\tau}=\tau P$.

\item[2.-] There exist elements of order $p$ in $H$, say $a_{1},\ldots,a_{n}$, so that the following hold. 
\begin{itemize}
\item[(i)] $H=\langle a_{1},\ldots,a_{n}\rangle$.
\item[(ii)] Each $a_{1},\ldots,a_{n}$ and $a_{n+1}=a_{1}a_{2} \cdots a_{n}$ has exactly $p^{n-1}$ fixed points.
\item[(iii)] If $h \in H$ has fixed points, then $h \in \langle a_{1} \rangle \cup \cdots \langle a_{n} \rangle \cup \langle a_{n+1} \rangle$.
\item[(iv)] If $h \in H$ is an element of order $p$ with fixed points and $x,y$ are any two of these fixed points, then  there is some $h^{*} \in H$ so that $h^{*}(x)=y$.
\item[(v)] If $p$ is a prime integer and $h \in H-\{I\}$ has no fixed points, then no non-trivial power of $h$ has fixed points.
\end{itemize}

Such a set of generators $a_{1}$,..., $a_{n}$ shall be called a {\it standard set of generators} for the generalized Fermat group $H$. 

\end{itemize}
\end{theorem}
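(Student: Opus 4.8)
The plan is to work entirely on the level of the orbifold fundamental group. Uniformizing the quotient $O=S/H$, a genus-zero orbifold with $n+1$ cone points of order $p$, I obtain a presentation
\[
\Gamma=\pi_1^{\rm orb}(O)=\langle x_1,\ldots,x_{n+1}\mid x_1^p=\cdots=x_{n+1}^p=x_1x_2\cdots x_{n+1}=1\rangle,
\]
and the regular covering $P:S\to O$ with deck group $H$ is encoded by a surjection $\theta:\Gamma\to H\cong{\mathbb Z}_p^n$ whose kernel $N=\ker\theta$ is the torsion-free surface group $\pi_1(S)$. Because $N$ is torsion-free while each $x_i$ is torsion, every $a_i:=\theta(x_i)$ is nontrivial, hence of order $p$, and the relation $\prod x_i=1$ gives $a_1\cdots a_{n+1}=1$.

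The step I would isolate as a lemma, and the heart of the argument, is that $N=[\Gamma,\Gamma]$. Indeed $\theta$ factors through the abelianization $\Gamma^{\rm ab}$, which the presentation computes directly as $({\mathbb Z}_p)^{n+1}/\langle(1,\ldots,1)\rangle\cong{\mathbb Z}_p^n$; since the induced map $\Gamma^{\rm ab}\to{\mathbb Z}_p^n$ is a surjection between finite groups of the same order, it is an isomorphism, forcing $\ker\theta=[\Gamma,\Gamma]$. In particular $H\cong\Gamma^{\rm ab}$. Since $a_1,\ldots,a_{n+1}$ generate $H$ with the single relation $a_1\cdots a_{n+1}=1$, the elements $a_1,\ldots,a_n$ already generate $H\cong{\mathbb Z}_p^n$, hence form a basis, which is (i); moreover $\theta(x_{n+1})$ generates $\langle a_1\cdots a_n\rangle$, so we may set $a_{n+1}=a_1\cdots a_n$ as in the statement, and after an automorphism of ${\mathbb Z}_p^n$ take $a_i=e_i$.

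From this I would read off the remainder of Part 2 by elementary covering bookkeeping. As $H$ is abelian, the stabilizer in $H$ of a point of $S$ lying over the $i$-th cone point is exactly $\langle a_i\rangle$, so $h\in H$ has fixed points precisely when it lies in some $\langle a_i\rangle$, which is (iii). For the standard basis one checks $a_i\in\langle a_j\rangle$ iff $i=j$, so the fixed locus of $a_i$ is exactly the fiber over the $i$-th cone point, of size $|H|/|\langle a_i\rangle|=p^{n-1}$, giving (ii). Transitivity of the $H$-action on each fiber yields (iv). Finally, when $p$ is prime one has $\langle h^k\rangle=\langle h\rangle$ for every $k\not\equiv 0\pmod p$, so $h^k$ has fixed points iff $h$ does, which is (v).

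For Part 1 the payoff of the lemma is that $N=[\Gamma,\Gamma]$ is a \emph{characteristic} subgroup of $\Gamma$. A conformal orbifold automorphism $\tau:O\to O$ lifts to a conformal automorphism $\widetilde\tau$ of the universal cover normalizing $\Gamma$, and conjugation by $\widetilde\tau$ is an automorphism of $\Gamma$; being characteristic, $N$ is preserved, so $\widetilde\tau$ descends to a conformal $\widehat\tau:S\to S$ with $P\widehat\tau=\tau P$, and the same normalization of $\Gamma$ shows $\widehat\tau H\widehat\tau^{-1}=H$, i.e. $\widehat\tau\in{\rm Aut}_H(S)$. The only point needing care is that $\tau$ determines the automorphism of $\Gamma$ only up to an inner automorphism (from the choice of path to the image of the basepoint), but inner automorphisms also fix the characteristic subgroup $N$, so the lifting criterion holds regardless. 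I expect the reduction of the geometric lifting problem to the purely algebraic identity $\ker\theta=[\Gamma,\Gamma]$ to be the crux; once that is established, the rest is routine.
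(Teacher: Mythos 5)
Your proof is correct, and it is essentially the argument one would find in the cited source: the paper itself gives no proof of this statement (it is quoted verbatim from \cite{GHL}), and the proof in \cite{GHL} runs exactly along your lines, uniformizing $S/H$ by a group $\Gamma$ of signature $(0;p,\stackrel{n+1}{\ldots},p)$ and identifying $\pi_{1}(S)$ with $[\Gamma,\Gamma]$ via the order count $|\Gamma^{\rm ab}|=p^{n}=|H|$. That identification is indeed the crux: it simultaneously yields the standard generators, the fixed-point bookkeeping, and (since $[\Gamma,\Gamma]$ is characteristic) the lifting of orbifold automorphisms in Part 1. One phrasing to tighten: since the theorem does not assume $p$ prime, ``$a_{i}$ is nontrivial, hence of order $p$'' is not automatic in ${\mathbb Z}_{p}^{n}$; but the same torsion-freeness argument applied to $x_{i}^{d}$ for each proper divisor $d$ of $p$ shows $x_{i}^{d}\notin N$, so $a_{i}$ has order exactly $p$ --- a one-line repair, not a gap. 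Likewise your verification that $a_{i}\in\langle a_{j}\rangle$ only for $i=j$ (including $j=n+1$) is what guarantees ${\rm Fix}(a_{i})$ is exactly one fiber, of cardinality $p^{n-1}$; that step is present and correct.
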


\s
\noindent
\begin{remark}\label{rema:unico}
In \cite{HKLP} it has been proved that every generalized Fermat curve $S$ of type $(p,n)$, where $(n-1)(p-1)>2$, has a unique generalized Fermat group $H$ of that type. In particular, ${\rm Aut}_{H}(S)={\rm Aut}(S)$.
\end{remark}

\s
%%%%%%%%%%%%%%%%%%%%%%%%%%%%
\subsubsection{Non-hyperbolic generalized Fermat curves}
The generalized Fermat curves of genus $g<2$ are  the following ones.
\begin{itemize}
\item[(i)] $(p,n)=(2,2)$:  $S=\widehat{\mathbb C}$ and $H=\langle A(z)=-z, B(z)=1/z\rangle$. 

\item[(ii)] $(p,n)=(3,2)$: $S={\mathbb C}/\Lambda_{e^{2 \pi i/3}}$, where
$\Lambda_{e^{2 \pi i/3}}=\langle A(z)=z+1, B(z)=z+e^{2 \pi i/3}\rangle$, and $H$ is generated by the induced transformations of $R(z)=e^{2 \pi i/3}z$ and $T(z)=z+(2+e^{2 \pi i/3})/3$. In this case, the $3$ cyclic groups $\langle R\rangle$, $\langle TR\rangle$ and $\langle T^{2}R \rangle$ project to the only $3$ cyclic subgroups in $H$ with fixed points; $3$ fixed points each one. This is provided by the degree $3$  Fermat curve $x^{3}+y^{3}+z^{3}=0$.

\item[(iii)] $(p,n)=(2,3)$: $S={\mathbb C}/\Lambda_{\tau}$, where $\tau \in {\mathbb H}^{2}=\{ z \in {\mathbb C}: {\rm Im}(\tau)>0\}$,  $\Lambda_{\tau}=\langle A(z)=z+1, B(z)=z+\tau\rangle$, and $H$ is generated by the induced transformations from $T_{1}(z)=-z$, $T_{2}(z)=-z+1/2$ and $T_{3}(z)=-z+\tau/2$. In this case, the conformal involutions induced on the torus by $T_1$, $T_2$, $T_3$ and their product are the only ones acting with fixed points; $4$ fixed points each. This is also described by the algebraic curve 
$\{x^{2}+y^{2}+z^{2}=0, \; \lambda x^{2}+y^{2}+w^{2}=0\}$, where $\lambda \in {\mathbb C}-\{0,1\}$.
\end{itemize}

\s
%%%%%%%%%%%%%%%%%%%%%%%%%%%
\subsubsection{Hyperbolic generalized Fermat curves}
A generalized Fermat curve of type $(p,n)$ has genus bigger than one if and only if $(n-1)(p-1)>2$; we say that the generalized Fermat pair (curve) is {\it hyperbolic}. 

Next result asserts that hyperbolic generalized Fermat curves are non-hyperelliptic (but Remark \ref{rema:unico} asserts that they behave somehow as if they were hyperelliptic ones).

\s
\noindent
\begin{theorem}[\cite{GHL}]\label{teo1}
A hyperbolic generalized Fermat curve is non-hyperelliptic.
\end{theorem}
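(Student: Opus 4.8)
The plan is to argue by contradiction, reducing a hypothetical hyperelliptic structure to an impossible embedding into ${\rm PGL}_{2}(\mathbb{C})$. Suppose $S$ is a hyperbolic generalized Fermat curve of type $(p,n)$, so that $(n-1)(p-1)>2$ and $g=g_{p,n}\geq 2$, and assume for contradiction that $S$ is hyperelliptic with hyperelliptic involution $\iota\in{\rm Aut}(S)$. I will lean on two classical facts: first, for $g\geq 2$ the hyperelliptic involution is unique, hence central in ${\rm Aut}(S)$, so $\langle\iota\rangle\trianglelefteq{\rm Aut}(S)$; second, $S/\langle\iota\rangle$ has genus zero, so its automorphism group is ${\rm PGL}_{2}(\mathbb{C})$.

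Next I would pass to the quotient. Since $\iota$ is central, the subgroup $\langle\iota\rangle$ is normal in $G=\langle H,\iota\rangle$, so $H$ descends to a group of automorphisms of the genus-zero orbifold $S/\langle\iota\rangle$. The key point to verify is that this descended action is faithful modulo $\langle\iota\rangle$: if $h\in H$ acts trivially on $S/\langle\iota\rangle$, then $h(x)\in\{x,\iota(x)\}$ for every $x\in S$, and an open–closed connectedness argument on $S$ forces $h\in\{I,\iota\}$. Thus the kernel of $H\to{\rm Aut}(S/\langle\iota\rangle)$ is exactly $H\cap\langle\iota\rangle$, yielding an embedding
\[
H/(H\cap\langle\iota\rangle)\ \hookrightarrow\ {\rm PGL}_{2}(\mathbb{C}).
\]

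The punchline is the classification of finite subgroups of ${\rm PGL}_{2}(\mathbb{C})$: among the cyclic, dihedral, $A_{4}$, $S_{4}$, $A_{5}$ families, the abelian ones are exactly the cyclic groups and the Klein four-group $\mathbb{Z}_{2}\times\mathbb{Z}_{2}$. If $\iota\notin H$, the image above is $\mathbb{Z}_{p}^{n}$ with $n\geq 2$, which is neither cyclic nor Klein four once $(p,n)=(2,2)$ is excluded, and hyperbolicity $(n-1)(p-1)>2$ excludes precisely that pair. If instead $\iota\in H$, then $p=2$, the image is $\mathbb{Z}_{2}^{n-1}$, and hyperbolicity forces $n\geq 4$, so the image has rank $\geq 3$ and is again neither cyclic nor Klein four. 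Either way the embedding is impossible, so $S$ is non-hyperelliptic.

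One can streamline the case distinction by first showing directly that $\iota\notin H$: for $p$ odd this is automatic since $|H|=p^{n}$ is odd, while for $p=2$ the involution $\iota$ would have fixed points and hence lie in some $\langle a_{i}\rangle$ by Theorem \ref{cor2}(iii), forcing it to be a standard generator with exactly $2^{n-1}$ fixed points by Theorem \ref{cor2}(ii); comparing this with the $2g+2$ fixed points a hyperelliptic involution must have rules it out in the hyperbolic range. The step I expect to require the most care is the faithfulness of the descended action (the kernel computation), since everything else reduces to the two classical invocations above; once faithfulness is in hand, the ${\rm PGL}_{2}(\mathbb{C})$ classification delivers the contradiction immediately.
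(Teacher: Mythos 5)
Your argument is correct: uniqueness and centrality of the hyperelliptic involution, faithfulness of the descended $H$-action on $S/\langle\iota\rangle$ (via finiteness of the fixed-point sets of $h$ and $\iota h$), and the classification of abelian subgroups of ${\rm PGL}_{2}({\mathbb C})$ together rule out embedding ${\mathbb Z}_{p}^{n}$ (or ${\mathbb Z}_{2}^{n-1}$ when $\iota\in H$) in the hyperbolic range $(n-1)(p-1)>2$. The present paper does not prove this statement — it is imported from \cite{GHL} — and your proof is essentially the standard argument given there, so there is nothing substantive to compare beyond noting that your direct verification that $\iota\notin H$ via the fixed-point count $2g+2\neq 2^{n-1}$ is a clean way to collapse the case analysis.
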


\s
%%%%%%%%%%%%%%%
\subsubsection{Algebraic description}
Let $(S,H)$ be a generalized Fermat pair of type $(p,n)$ and,  up to a M\"obius transformation, let
$\{\infty,0,1,\lambda_{1},\lambda_{2},\ldots,\lambda_{n-2}\}$
be the conical points of $S/H$.

Let us consider the following fiber product of $(n-1)$ classical Fermat curves
$$
C^{p}(\lambda_{1},\ldots,\lambda_{n-2}): \left\{ \begin{array}{ccc}
x_{1}^{p}+x_{2}^{p}+x_{3}^{p}&=&0\\
\lambda_{1}x_{1}^{p}+x_{2}^{p}+x_{4}^{p}&=&0\\
\lambda_{2}x_{1}^{p}+x_{2}^{p}+x_{5}^{p}&=&0\\
\vdots & \vdots& \vdots\\
\lambda_{n-2}x_{1}^{p}+x_{2}^{p}+x_{n+1}^{p}&=&0
\end{array}
\right\} \subset {\mathbb P}^{n}.
$$

 The conditions on the parameters $\lambda_{j}$ ensure that $C^{p}(\lambda_{1},\ldots,\lambda_{n-2})$ is a non-singular projective algebraic curve, that is, a closed Riemann surface. On $C^{p}(\lambda_{1},\ldots,\lambda_{n-2})$ we have the Abelian group $H_{0} \cong {\mathbb Z}_{p}^{n}$ of conformal automorphisms generated by the transformations 
$$a_{j}([x_{1}:\cdots:x_{n+1}])=[x_{1}:\cdots:x_{j-1}: \omega_{p} x_{j}:x_{j+1}:\cdots:x_{n+1}], \; j=1,...,n,$$ 
where $\omega_{p}=e^{2 \pi i/p}$.

If we consider the degree $p^{n}$ holomorphic map 
$$\pi:C^{p}(\lambda_{1},...,\lambda_{n-2}) \to \widehat{\mathbb C}$$ given by
$$\pi([x_{1}:\cdots:x_{n+1}])=-\left( \dfrac{x_{2}}{x_{1}} \right)^{p},$$
then $\pi \circ a_{j} = \pi$, for every $a_{j}$, $j=1,\ldots,n$. It follows that $C^{p}(\lambda_{1},\ldots,\lambda_{n-2})$ is a generalized Fermat curve with generalized Fermat group $H_0$; whose standard generators are $a_{1},\ldots, a_{n}$ and $a_{n+1}=a_{1}a_{2}\cdots a_{n}$. 

The fixed points of $a_{j}$ on $C^{p}(\lambda_{1},\ldots,\lambda_{n-2})$ are given by the intersection 
$$Fix(a_{j})=\{x_{j}=0\} \cap C^{p}(\lambda_{1},\ldots,\lambda_{n-2}).$$ 

In this way, the branch values of $\pi$ are given by the points
$$\pi(Fix(a_{1}))=\infty, \; \pi(Fix(a_{2}))=0, \; \pi(Fix(a_{3}))=1,$$
$$\pi(Fix(a_{4}))=\lambda_{1},\ldots,\; \pi(Fix(a_{n+1}))=\lambda_{n-2}.$$

\s
\noindent
\begin{theorem}[\cite{GHL}]\label{teo6}
Within the above notations the following hold.
\begin{enumerate}
\item The generalized Fermat pairs
$(S,H)$ and $(C^{p}(\lambda_{1},\ldots,\lambda_{n-2}),H_0)$ are holomorphically equivalent. 

\item  ${\rm Aut}_{H_{0}}(C^{p}(\lambda_{1},\ldots,\lambda_{n-2}))/H_{0}$ is isomorphic to the subgroup of M\"obius transformations that preserves the finite set
$$\{\infty,0,1,\lambda_{1},\lambda_{2},\ldots,\lambda_{n-2}\}.$$
\item ${\rm Aut}_{H_{0}}(C^{p}(\lambda_{1},\ldots,\lambda_{n-2}))<PGL(n+1,{\mathbb C})$.
\end{enumerate}
\end{theorem}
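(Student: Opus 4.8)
The plan is to prove the three assertions in order, using as the only substantial inputs the topological rigidity of Theorem \ref{unif}, the lifting property of Theorem \ref{cor2}(1), and the explicit branch-value computation for $\pi$ recorded just above the statement. Throughout write $\mathcal{O}$ for the Riemann sphere marked at $\{\infty,0,1,\lambda_{1},\ldots,\lambda_{n-2}\}$ with all cone orders equal to $p$, and let $\Gamma<PGL(2,{\mathbb C})$ denote the (finite) group of M\"obius transformations preserving this finite set.

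For (1) the key observation is that the two pairs have the \emph{same} complex quotient orbifold. By hypothesis $S/H$ is $\mathcal{O}$, while the computation $\pi(\mathrm{Fix}(a_{1}))=\infty,\ldots,\pi(\mathrm{Fix}(a_{n+1}))=\lambda_{n-2}$ shows that $C^{p}(\lambda_{1},\ldots,\lambda_{n-2})/H_{0}$, with the orbifold structure induced by $\pi$, is this same $\mathcal{O}$. A generalized Fermat curve of type $(p,n)$ is a regular branched covering of $\mathcal{O}$ whose deck group is a generalized Fermat group, determined by a monodromy homomorphism $\theta\colon\pi_{1}^{\mathrm{orb}}(\mathcal{O})\to{\mathbb Z}_{p}^{n}$ sending the loop around the $j$-th cone point to the $j$-th standard generator (Theorem \ref{cor2}(2)). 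Since the ordering of the cone points fixes the correspondence between cone points and standard generators, the two monodromy homomorphisms for $S$ and for $C^{p}(\lambda_{1},\ldots,\lambda_{n-2})$ differ only by an automorphism of ${\mathbb Z}_{p}^{n}$; such an automorphism leaves the kernel unchanged, so both define the \emph{same} subgroup of $\pi_{1}^{\mathrm{orb}}(\mathcal{O})$ (this rigidity is exactly the content of Theorem \ref{unif}). Consequently the identity map of $\mathcal{O}$ lifts to a biholomorphism $f\colon S\to C^{p}(\lambda_{1},\ldots,\lambda_{n-2})$, and since $f$ covers the identity it conjugates $H$ onto $H_{0}$, giving the holomorphic equivalence.

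For (2) I would exhibit the isomorphism explicitly. Any $\phi\in{\rm Aut}_{H_{0}}(C^{p}(\lambda_{1},\ldots,\lambda_{n-2}))$ normalizes $H_{0}$, hence permutes the $H_{0}$-orbits and descends to an orbifold automorphism $\overline{\phi}$ of $\mathcal{O}=C^{p}/H_{0}$; as all cone points have the same order $p$, $\overline{\phi}$ is precisely a M\"obius transformation permuting $\{\infty,0,1,\lambda_{1},\ldots,\lambda_{n-2}\}$, i.e. an element of $\Gamma$. This defines a homomorphism $\Psi\colon{\rm Aut}_{H_{0}}(C^{p})\to\Gamma$, $\phi\mapsto\overline{\phi}$. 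Its kernel consists of those $\phi$ with $\pi\circ\phi=\pi$, that is, the deck transformations of $\pi$; since $\pi$ is regular of degree $p^{n}=|H_{0}|$ and $H_{0}$ already acts by deck transformations, the kernel is exactly $H_{0}$. Surjectivity is immediate from Theorem \ref{cor2}(1), which lifts every orbifold automorphism of $\mathcal{O}$, hence every element of $\Gamma$, into ${\rm Aut}_{H_{0}}(C^{p})$. Therefore $\Psi$ induces the stated isomorphism ${\rm Aut}_{H_{0}}(C^{p}(\lambda_{1},\ldots,\lambda_{n-2}))/H_{0}\cong\Gamma$.

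For (3) the strategy is to realize the lifts of (2) by projective linear maps. Since $H_{0}<PGL(n+1,{\mathbb C})$ already, by (2) it suffices to produce, for each $\tau\in\Gamma$, a \emph{monomial} lift $M_{\tau}\in PGL(n+1,{\mathbb C})$ of the form $x_{j}\mapsto c_{j}\,x_{\sigma^{-1}(j)}$, where $\sigma\in S_{n+1}$ is the permutation that $\tau$ induces on the cone points, equivalently on the slices $\{x_{j}=0\}=\mathrm{Fix}(a_{j})$. Granting these, for any $\phi\in{\rm Aut}_{H_{0}}(C^{p})$ with $\tau=\Psi(\phi)$ one has $\Psi(\phi M_{\tau}^{-1})=\mathrm{id}$, so $\phi M_{\tau}^{-1}\in H_{0}$ and $\phi=h\,M_{\tau}$ with $h\in H_{0}$; as both factors lie in $PGL(n+1,{\mathbb C})$, so does $\phi$. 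To build $M_{\tau}$ I would pass to the $H_{0}$-invariant coordinates $u_{j}=x_{j}^{p}$, in which the defining system becomes the linear relations $u_{1}+u_{2}+u_{3}=0$ and $\lambda_{k}u_{1}+u_{2}+u_{k+3}=0$, and in which $\pi=-u_{2}/u_{1}$; the $j$-th slice is $\{u_{j}=0\}$ and $\pi$ takes on it exactly the $j$-th cone point. The requirement that $\tau$ be a M\"obius transformation permuting $\{\infty,0,1,\lambda_{1},\ldots,\lambda_{n-2}\}$ is then precisely the condition that the substitution $u_{j}\mapsto c_{j}^{p}\,u_{\sigma^{-1}(j)}$ carries this linear system into an equivalent one, which determines the values $c_{j}^{p}$ (the $p$-th root ambiguity in $c_{j}$ being absorbed by $H_{0}$). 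The main obstacle is exactly this final step: checking that the resulting $M_{\tau}$ preserves the full, non-linearized system and satisfies $\pi\circ M_{\tau}=\tau\circ\pi$. This is the one genuinely computational point, amounting to a cross-ratio bookkeeping matching the transformed coefficients against the permuted parameters $\lambda_{k}$; once it is verified, assertions (2) and (3) together complete the proof.
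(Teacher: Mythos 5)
The paper does not prove Theorem \ref{teo6} at all: it is imported verbatim from \cite{GHL}, so there is no internal proof to compare against. Judged on its own, your reconstruction is essentially the standard argument and is correct in structure, with two soft spots worth flagging. First, in part (1) you attribute the rigidity to Theorem \ref{unif}, but that theorem only provides a topological equivalence covering \emph{some} homeomorphism of the base, not the identity; what you actually need (and what makes your argument work) is that $\pi_{1}^{\mathrm{orb}}(\mathcal{O})=\langle x_{1},\ldots,x_{n+1}\mid x_{j}^{p},\ x_{1}\cdots x_{n+1}\rangle$ has abelianization exactly ${\mathbb Z}_{p}^{n}$, so that \emph{every} epimorphism onto ${\mathbb Z}_{p}^{n}$ has kernel equal to the commutator subgroup; hence the two coverings correspond to the same normal subgroup and the identity of $\mathcal{O}$ lifts. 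That one-line observation replaces the misattributed citation and also disposes of your unproved claim that the two monodromies ``differ by an automorphism of ${\mathbb Z}_{p}^{n}$.'' Second, the computation you defer in part (3) is genuinely routine and should be spelled out: in the coordinates $u_{j}=x_{j}^{p}$ the curve is the line $t\mapsto[1:-t:t-1:t-\lambda_{1}:\cdots:t-\lambda_{n-2}]$, each coordinate being a degree-one form vanishing at exactly one cone point, so for $\tau(t)=(at+b)/(ct+d)$ preserving the cone set one has $(ct+d)\,u_{j}(\tau(t))=c_{j}\,u_{\sigma^{-1}(j)}(t)$ for nonzero constants $c_{j}$, and $x_{j}\mapsto c_{j}^{1/p}x_{\sigma^{-1}(j)}$ is the desired monomial element of $PGL(n+1,{\mathbb C})$ normalizing $H_{0}$; the decomposition $\phi=hM_{\tau}$ then goes through as you describe. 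Your part (2) is complete as written.
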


\s
\noindent
\begin{remark}
Recall from Remark \ref{rema:unico} that ${\rm Aut}_{H_{0}}(C^{p}(\lambda_{1},\ldots,\lambda_{n-2}))={\rm Aut}(C^{p}(\lambda_{1},\ldots,\lambda_{n-2}))$.
\end{remark}

\s
%%%%%%%%%%%%%%%%
\subsection{Principally polarized abelian varieties}\label{sec:poincare}
A {\it principally polarized abelian variety} of dimension $g \geq 1$ is a pair $A=(T,Q)$, where $T={\mathbb C}^{g}/L$ is a complex torus of dimension $g$ and $Q$ (called a {\it principal polarization} of $A$) is a positive-definite Hermitian product in ${\mathbb C}^{g}$ whose imaginary part ${\rm Im}(Q)$ has integral values over elements of the lattice $L$ and such that there is basis of $L$ for which ${\rm Im}(Q)$ can be represented by the matrix
$$\left( \begin{array}{cc}
0_{g} & I_{g}\\
-I_{g} & 0_{g}
\end{array}
\right)
$$

Two principally polarized abelian varieties $A_{1}$ and $A_{2}$ are called {\it isogenous} if there is a 
non-constant surjective morphism $h:A_{1} \to A_{2}$ (between the corresponding tori) with finite kernel; in this case $h$  
is called an {\it isogeny} (an isomorphism if the kernel is trivial)

A principally polarized abelian variety $A$ is called {\it decomposable} if it is isogenous to the product of abelian varieties of smaller dimensions (otherwise, it is said to be {\it simple}). It is called {\it completely decompossable} if it is the product of elliptic curves (varieties of dimension $1$).

\s
\noindent
\begin{theorem}[Poincar\'e's complete reducibility theorem \cite{Poincare}]
If $A$ is a principally polarized abelian variety, then there exist simple polarized abelian varieties $A_{1}, \ldots, A_{s}$ and positive integers $n_{1},\ldots,n_{s}$ such that $A$ is isogenous to the product $A_{1}^{n_{1}} \times \cdots A_{s}^{n_{s}}$. Moreover, the $A_{j}$ and $n_{j}$ are unique (up to isogeny) and permutation of the factors.
\end{theorem}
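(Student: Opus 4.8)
The plan is to prove existence by repeatedly splitting off abelian subvarieties using the polarization, and to settle uniqueness through a Schur-type argument on homomorphisms between simple factors. The central tool for existence is Poincar\'e's reducibility statement: given any abelian subvariety $B$ of $A$, the polarization produces a complementary abelian subvariety $C$ so that the addition map $B \times C \to A$ is an isogeny. Everything else is either an induction on dimension or a formal consequence of a vanishing lemma.

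First I would establish reducibility analytically. Write $A = V/L$ with $V = {\mathbb C}^{g}$ and let $Q$ be the polarizing Hermitian form, whose imaginary part $E = {\rm Im}(Q)$ takes integral values on $L$. An abelian subvariety $B$ corresponds to a complex subspace $W \subset V$ for which $W \cap L$ is a full lattice in $W$; equivalently, $W$ is rational for the ${\mathbb Q}$-structure $L \otimes {\mathbb Q}$. Let $W^{\perp}$ be the orthogonal complement of $W$ relative to $Q$. The key point, and the main obstacle, is to show that $W^{\perp}$ is again rational, so that $C = W^{\perp}/(W^{\perp} \cap L)$ is an abelian subvariety. This is secured by checking that for a complex subspace $W$ the $Q$-orthogonal complement coincides with the $E$-orthogonal complement (using $W = iW$ together with the identity $E(iv,w) = -E(v,iw)$); since $E$ is a rational alternating form on $L \otimes {\mathbb Q}$, the $E$-orthogonal complement of a rational subspace is rational, and this rationality transfers to $W^{\perp}$. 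Positive-definiteness of $Q$ forces $W \cap W^{\perp} = 0$, hence $B \cap C$ is finite and $B + C = A$, giving the desired isogeny; moreover $Q$ restricts to a polarization on each of $B$ and $C$.

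With reducibility in hand, existence of the decomposition follows by induction on $\dim A$. If $A$ is simple there is nothing to prove. Otherwise $A$ admits a proper nontrivial abelian subvariety $B$, and the reducibility step yields $A \sim B \times C$ with both factors of strictly smaller dimension, each carrying the restricted polarization. By the inductive hypothesis each of $B$ and $C$ is isogenous to a product of simple polarized abelian varieties; concatenating these and grouping together mutually isogenous factors yields $A \sim A_{1}^{n_{1}} \times \cdots \times A_{s}^{n_{s}}$ with the $A_{j}$ pairwise non-isogenous simple factors.

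For uniqueness I would invoke the Schur-type lemma: any nonzero homomorphism $f \colon A_{i} \to A_{j}$ between simple abelian varieties is an isogeny, since $\ker f$ and the image are abelian subvarieties and simplicity forces each to be trivial or everything. Consequently $\mathrm{Hom}(A_{i}, A_{j}) \otimes {\mathbb Q} = 0$ whenever $A_{i}$ and $A_{j}$ are non-isogenous. Given two decompositions $A \sim \prod_{i} A_{i}^{n_{i}} \sim \prod_{j} B_{j}^{m_{j}}$ into powers of pairwise non-isogenous simple factors, composing the isogeny with the projections and using this vanishing matches each $A_{i}$ with a unique $B_{\sigma(i)}$ up to isogeny; comparing dimensions of the corresponding isotypic components, $n_{i}\dim A_{i} = m_{\sigma(i)}\dim B_{\sigma(i)}$, forces $n_{i} = m_{\sigma(i)}$. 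Equivalently, one may pass to $\mathrm{End}^{0}(A)$, which is semisimple by the reducibility just proved, and read off the simple factors and their multiplicities from its Wedderburn decomposition. The delicate point throughout is the rationality of the orthogonal complement in the reducibility step; once that is secured, both the induction and the uniqueness arguments are formal.
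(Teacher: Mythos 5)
Your proof is correct and complete at the expected level of detail: the reduction to showing that the $Q$-orthogonal complement of a rational complex subspace is again rational (via the identification with the $E$-orthogonal complement for $E=\mathrm{Im}(Q)$), the induction on dimension for existence, and the Schur-lemma argument for uniqueness are all the standard ingredients. The paper itself offers no proof of this statement; it is quoted as a classical theorem with a citation to Poincar\'e, so there is nothing in the text to compare your argument against. What you have written is essentially the textbook proof (as in Birkenhake--Lange or Mumford), and it is sound.
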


\s

We should restrict to certain small locus of principally polarized abelian varieties coming from closed Riemann surfaces.

%%%%%%%%%%%%%%%%%%%%%
\subsection{The Jacobian variety}\label{sec:jacobian}
Let $S$ be a closed Riemann surface of genus $g \geq 1$. Its first homology group $H_{1}(S,{\mathbb Z})$ is isomorphic, as a ${\mathbb Z}$-module, to ${\mathbb Z}^{2g}$ and the complex vector space $H^{1,0}(S)$ of its holomorphic $1$-forms is isomorphic to ${\mathbb C}^{g}$. There is a natural injective map
$$\iota:H_{1}(S,{\mathbb Z}) \hookrightarrow \left( H^{1,0}(S) \right)^{*} \quad \mbox{(the dual space of $H^{1,0}(S)$)}$$
$$\alpha \mapsto \int_{\alpha}.$$

The image $\iota(H_{1}(S,{\mathbb Z}))$ is a lattice in $\left( H^{1,0}(S) \right)^{*}$ and the quotient $g$-dimensional torus 
$$JS=\left( H^{1,0}(S) \right)^{*}/\iota(H_{1}(S,{\mathbb Z}))$$
is called the Jacobian variety of $S$.  The intersection product form in $H_{1}(S,{\mathbb Z})$ induces a principal polarization on $JS$.

If we fix a point $p_{0} \in S$, then there is a natural holomorphic embedding $$\rho_{p_{0}}:S \to JS$$
defined by $\rho(p)=\int_{\alpha}$, where $\alpha \subset S$ is an arc connecting $p_{0}$ with $p$.

If we choose a symplectic homology basis for $S$, say $\{\alpha_{1},\ldots,\alpha_{g},\beta_{1},\ldots,\beta_{g}\}$ (i.e. a basis for $H_{1}(S,{\mathbb Z})$ such that the intersection products $\alpha_{i} \cdot \alpha_{j}=\beta_{i} \cdot \beta_{j}=0$ and $\alpha_{i} \cdot \beta_{j}=\delta_{ij}$, where $\delta_{ij}$ is the Kronecker delta function), we may find a dual basis $\{\omega_{1},\ldots,\omega_{g}\}$ (i.e. a basis of $H^{1,0}(S)$ such that $\int_{\alpha_{i}} \omega_{j}=\delta_{ij}$).  We may consider the Riemann matrix 
$$Z=\left( \int_{\beta_{j}} \omega_{i} \right)_{g \times g} \in {\mathfrak H}_{g}.$$

If we now consider the Riemann period matrix $\Omega=(I \; Z)_{g \times 2g}$, then its $2g$ columns define a lattice in ${\mathbb C}^{g}$. The quotient torus ${\mathbb C}^{g}/\Omega$ is isomorphic to $JS$.
 
%%%%%%%%%%%%%%%%%%%%%
\subsection{Holomorphic maps and the Jacobian variety}
Let $f:S \to R$ be a non-constant holomorphic map between closed Riemann surfaces, both of genus at least $1$. There are natural induced maps 
$$H_{1}(f):H_{1}(S,{\mathbb Z}) \to H_{1}(R,{\mathbb Z})$$
$$H^{1,0}(f):H^{1,0}(R) \to H^{1,0}(S) \quad \mbox{(pull-back of forms)}$$
which together (we use the dual map of the last) permit to define a non-constant surjective morphism
$$J(f):JS \to JR$$
such that, if $p_{0} \in S$, then $$J(f) \circ \rho_{p_{0}} = \rho_{f(p_{0})} \circ f.$$

The connected component of the kernel of $J(f)$ containing $0$ is called the Prym sub-variety of $JS$ with respect to $f$.

%%%%%%%%%%%%%%%%%%%%%
\subsection{Kani-Rosen's decomposition theorem}
As a consequence of Poincar\'e complete reducibility theorem, the jacobian variety of a closed Riemann surface can be decomposed, up to isogeny, into a product of simple sub-varieties. To obtain such a decomposition is not an easy job, but there are general results which permit to work in this direction.

The following result, due to Kani and Rosen \cite{K-R}, provides sufficient conditions for the Jacobian variety of a closed Riemann surface to decompose into the product of the Jacobian varieties of suitable quotient Riemann surfaces. If $K<{\rm Aut}(S)$, we denote by $g_{K}$ the genus of the quotient orbifold $S/K$ and by $S_{K}$ the underlying Riemann surface structure of the orbifold $S/K$.

\s
\noindent
\begin{theorem}[Kani-Rosen's decomposition theorem \cite{K-R}]
Let $S$ be a closed Riemann surface of genus $g \geq 1$ and let $H_{1},\ldots,H_{s}<{\rm Aut}(S)$ such that:
\begin{enumerate}
\item $H_{i} H_{j}=H_{j} H_{i}$, for all $i,j =1,\ldots,s$;
\item there are integers $n_{1},\ldots, n_{s}$ satisfying that
\begin{enumerate}
\item $\sum_{i,j=1}^{s} n_{i}n_{j} g_{H_{i}H_{j}}=0$, and
\item for every $i=1,\ldots,s$, it also holds that $\sum_{j=1}^{s} n_{j} g_{H_{i}H_{j}}=0$.
\end{enumerate}
\end{enumerate}

Then 
$$\prod_{n_{i}>0} \left( JS_{H_{i}} \right)^{n_{i}} \cong_{isog.}  \prod_{n_{j}<0} \left( JS_{H_{j}} \right)^{-n_{j}}.$$
\end{theorem}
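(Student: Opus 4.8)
The plan is to realize all the relevant Jacobians as images of idempotents in a rational group algebra acting on $JS$, and then to translate the hypotheses into a single operator identity. Set $G=\langle H_{1},\ldots,H_{s}\rangle<\mathrm{Aut}(S)$, a finite group, acting on the rational homology $V=H_{1}(S,\mathbb{Q})$, a $\mathbb{Q}$-vector space of dimension $2g$ that carries the complex (Hodge) structure $J$ coming from $JS$; since $G$ acts by holomorphic automorphisms, every element of $\mathbb{Q}[G]$ commutes with $J$. For a subgroup $K<G$ I would introduce the idempotent $\pi_{K}=\tfrac{1}{|K|}\sum_{k\in K}k\in\mathbb{Q}[G]$. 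The first step is the standard identification: the quotient map $S\to S_{K}$ induces an isogeny between the image abelian subvariety $\pi_{K}(JS)$ and $JS_{K}$, because $\pi_{K}$ acts on $H^{1,0}(S)$ with image the $K$-invariant forms $H^{1,0}(S)^{K}\cong H^{1,0}(S_{K})$; in particular $\dim\pi_{K}(JS)=g_{K}$ and $\mathrm{tr}\!\left(\pi_{K}\mid V\right)=2g_{K}$.

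Next I would use hypothesis (1). When $H_{i}H_{j}=H_{j}H_{i}$ the product $H_{i}H_{j}$ is a subgroup, and counting the number of ways of writing an element of $H_{i}H_{j}$ as a product $ab$ with $a\in H_{i}$, $b\in H_{j}$ gives $\pi_{H_{i}}\pi_{H_{j}}=\pi_{H_{i}H_{j}}$. Consequently $g_{H_{i}H_{j}}=\tfrac12\,\mathrm{tr}\!\left(\pi_{H_{i}}\pi_{H_{j}}\mid V\right)$, so that hypotheses (2a)--(2b) become trace identities for the element $\Phi=\sum_{i=1}^{s}n_{i}\pi_{H_{i}}\in\mathbb{Q}[G]$: condition (2b) reads $\mathrm{tr}(\pi_{H_{i}}\Phi\mid V)=0$ for every $i$, and condition (2a) reads $\mathrm{tr}(\Phi^{2}\mid V)=0$.

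The heart of the argument is then a positivity observation. Averaging any inner product over the finite group $G$ produces a $G$-invariant positive definite inner product on $V_{\mathbb{R}}$; with respect to it each $k\in G$ is orthogonal, so $\pi_{K}^{*}=\pi_{K}$ and hence $\Phi$ is self-adjoint. Therefore $\mathrm{tr}(\Phi^{2}\mid V)=\mathrm{tr}(\Phi\,\Phi^{*}\mid V)\geq 0$, with equality if and only if $\Phi=0$ as an operator on $V$. Thus condition (2a) forces the operator identity
$$\sum_{n_{i}>0}n_{i}\,\pi_{H_{i}}=\sum_{n_{j}<0}(-n_{j})\,\pi_{H_{j}}\qquad\text{on }V,$$
and condition (2b) is then recovered automatically by applying $\mathrm{tr}(\pi_{H_{i}}\,\cdot\,)$.

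Finally I would convert this identity of operators into the desired isogeny. The commuting idempotents $\pi_{H_{i}}$ generate a semisimple commutative $\mathbb{Q}$-subalgebra of $\mathrm{End}_{\mathbb{Q}\text{-HS}}(V)$; writing its minimal idempotents as $f_{1},\ldots,f_{m}$, each $\pi_{H_{i}}$ is a sum of certain $f_{t}$, and comparing the coefficient of each $f_{t}$ on the two sides of the displayed identity shows that the sub-Hodge-structure $f_{t}V$ occurs with the same multiplicity in $\bigoplus_{n_{i}>0}(\pi_{H_{i}}V)^{n_{i}}$ and in $\bigoplus_{n_{j}<0}(\pi_{H_{j}}V)^{-n_{j}}$. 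Since $H_{1}(\,\cdot\,,\mathbb{Q})$ of the two product varieties $\prod_{n_{i}>0}(JS_{H_{i}})^{n_{i}}$ and $\prod_{n_{j}<0}(JS_{H_{j}})^{-n_{j}}$ is exactly one of these two Hodge structures, they are isomorphic as polarizable weight-one Hodge structures, which is equivalent to the two abelian varieties being isogenous. The step I expect to be the main obstacle is the clean identification $\pi_{K}(JS)\sim JS_{K}$ together with the compatibility of the $\pi_{K}$ with the complex structure $J$, since this is precisely what guarantees that the purely algebraic operator identity transports faithfully to an isogeny of abelian varieties; the positivity argument, by contrast, is short once the $G$-invariant inner product is in place.
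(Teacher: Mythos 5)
This theorem is imported by the paper from Kani--Rosen \cite{K-R} and is stated without proof, so there is no in-paper argument to compare against; I have therefore checked your proposal against the standard argument. Your proof is correct, and it is essentially the Kani--Rosen strategy: the averaging idempotents $\pi_{K}=\tfrac{1}{|K|}\sum_{k\in K}k$ in $\mathbb{Q}[G]\subset{\rm End}^{0}(JS)$, the identity $\pi_{H_{i}}\pi_{H_{j}}=\pi_{H_{i}H_{j}}$ (valid precisely because hypothesis (1) makes $H_{i}H_{j}$ a subgroup), the trace computation ${\rm tr}(\pi_{K}\mid H_{1}(S,\mathbb{Q}))=\dim V^{K}=2g_{K}$, and a positivity argument --- your $G$-invariant inner product plays exactly the role of the Rosati involution, under which each $\pi_{K}$ is likewise self-adjoint since automorphisms preserve the canonical polarization. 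Two small points worth noting. First, your positivity step shows that hypothesis (2a) alone already forces $\Phi=\sum_{i}n_{i}\pi_{H_{i}}=0$ as an operator, so (2b) becomes redundant in your treatment; this is not an error (you are proving the theorem under weaker hypotheses), but you should flag it rather than present (2b) as being ``recovered,'' and it is the main respect in which your route is cleaner than working with the weaker equivalence relation on idempotents as in \cite{K-R}. Second, for $g=1$ the group ${\rm Aut}(S)$ is infinite, so the finiteness of $G=\langle H_{1},\ldots,H_{s}\rangle$ (needed for the averaged inner product) deserves the one-line remark that hypothesis (1) gives $G=H_{1}H_{2}\cdots H_{s}$, a finite set.
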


\s

If in the above theorem we set $H_{s}=\{1\}$, $n_{1}=\cdots=n_{s-1}=-1$ and $n_{s}=1$, then one has the following consequence that we will use for our family of generalized Fermat curves.

\s
\noindent
\begin{corollary}[\cite{K-R}]\label{coroKR}
Let $S$ be a closed Riemann surface of genus $g \geq 1$ and let $H_{1},\ldots,H_{t}<{\rm Aut}(S)$ such that:
\begin{enumerate}
\item $H_{i} H_{j}=H_{j} H_{i}$, for all $i,j =1,\ldots,t$;
\item $g_{H_{i}H_{j}}=0$, for $1 \leq i < j \leq t$
\item $g=\sum_{j=1}^{t} g_{H_{j}}$.
\end{enumerate}

Then 
$$JS \cong_{isog.}  \prod_{j=1}^{t} JS_{H_{j}}.$$
\end{corollary}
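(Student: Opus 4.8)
The plan is to derive Corollary~\ref{coroKR} as a direct specialization of the Kani--Rosen decomposition theorem by choosing the auxiliary data in a way that collapses the general isogeny relation into the desired product formula. Concretely, I would introduce one extra subgroup $H_{s}=\{1\}$ with $s=t+1$, so that $S_{H_{s}}=S$ itself and $g_{H_{s}}=g$, and then set the multiplicities to be $n_{1}=\cdots=n_{t}=-1$ and $n_{s}=+1$. With this choice the positive-index factors of the theorem reduce to the single variety $JS$, while the negative-index factors reduce to $\prod_{j=1}^{t} JS_{H_{j}}$, matching exactly the two sides of the claimed isogeny once we verify that the hypotheses (2a) and (2b) of the theorem hold.

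The heart of the argument is therefore the verification of the two numerical conditions under the three hypotheses of the corollary. First I would record the commutativity hypothesis: since $H_{s}=\{1\}$ commutes with everything, condition (1) of the theorem follows immediately from condition (1) of the corollary. The key observation I would use repeatedly is that $g_{H_{i}H_{s}}=g_{H_{i}\cdot\{1\}}=g_{H_{i}}$ and $g_{H_{s}H_{s}}=g_{\{1\}}=g$, while for $1\le i<j\le t$ hypothesis (2) of the corollary gives $g_{H_{i}H_{j}}=0$. I would then check condition (2b) (for each fixed $i$, the sum $\sum_{j} n_{j} g_{H_{i}H_{j}}=0$) by splitting into two cases. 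For a fixed index $i\le t$, the sum is $\sum_{j=1}^{t}(-1)g_{H_{i}H_{j}} + (+1)g_{H_{i}} = -g_{H_{i}H_{i}} - \sum_{j\neq i, j\le t} g_{H_{i}H_{j}} + g_{H_{i}}$; since $g_{H_{i}H_{i}}=g_{H_{i}}$ and each off-diagonal $g_{H_{i}H_{j}}=0$, this telescopes to zero. For $i=s$, the sum is $\sum_{j=1}^{t}(-1)g_{H_{j}} + g = -\sum_{j=1}^{t}g_{H_{j}} + g$, which vanishes precisely by hypothesis (3) of the corollary. Condition (2a) is then obtained by summing the (2b) identities weighted by $n_{i}$, or directly by the same case analysis.

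The step I expect to require the most care is the bookkeeping in condition (2a), namely that the full double sum $\sum_{i,j} n_{i}n_{j} g_{H_{i}H_{j}}$ vanishes. I would organize this as a sum over the block structure induced by whether each index equals $s$ or not: the purely-off-diagonal terms with $i,j\le t$ and $i\neq j$ all vanish by hypothesis (2) of the corollary, the diagonal terms $\sum_{i\le t} g_{H_{i}}$ appear with coefficient $(-1)(-1)=+1$, the mixed terms pairing an index $\le t$ with the index $s$ contribute $2\cdot(-1)(+1)\sum_{j\le t} g_{H_{j}}$, and the single term $i=j=s$ contributes $(+1)(+1)g$; collecting these gives $\sum_{j\le t}g_{H_{j}} - 2\sum_{j\le t}g_{H_{j}} + g = g - \sum_{j\le t} g_{H_{j}} = 0$ again by hypothesis (3). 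Once both numerical conditions are confirmed, the theorem yields $(JS)^{1}\cong_{\mathrm{isog.}}\prod_{j=1}^{t}(JS_{H_{j}})^{1}$, which is the assertion.
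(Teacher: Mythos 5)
Your proposal is correct and is exactly the route the paper takes: the authors obtain Corollary \ref{coroKR} by specializing the Kani--Rosen theorem with $H_{s}=\{1\}$, $n_{1}=\cdots=n_{s-1}=-1$, $n_{s}=1$, which is precisely your choice of auxiliary data. The paper leaves the verification of conditions (2a) and (2b) implicit, whereas you carry it out in full; your checks (using $g_{H_iH_i}=g_{H_i}$, $g_{H_iH_s}=g_{H_i}$, and hypothesis (3) for the $i=s$ case) are all accurate.
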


\s

%%%%%%%%%%%%%%%%
%%%%%%%%%%%%%%%%
\section{A simple (well known) application to hyperelliptic Riemann surfaces}\label{sec:hiper} 
In this section we describe (some of them well known) results on the decomposition of the jacobian variety of hyperelliptic Riemann surface with automorphisms of order two different from the hyperelliptic involution, as an application of Corollary \ref{coroKR}, and we provide some examples whose Jacobian variety decomposes as product of elliptic curves (in genera two and four).

\subsection{A decomposition for hyperelliptic Riemann surfaces with an extra involution}
\s
\noindent
\begin{corollary}\label{hiper2}
Let $S$ be a hyperelliptic Riemann surface of genus $g \geq 2$, with hyperelliptic involution $\iota$, admitting an extra conformal involution $\tau$. 

\begin{enumerate}
\item There are values $\lambda_{1},\ldots,\lambda_{g} \in {\mathbb C}-\{0,1\}$, with  $\lambda_{i} \neq \lambda_{j}$ for $i \neq j$, such that $S$ can be described by the hyperelliptic curve 
\begin{equation}\label{ecuacion}
C: y^{2}=x(x-1)\left(x+\frac{(1-\mu_{1})^{2}}{4\mu_{1}}\right) \prod_{j=2}^{g} 
\left(x^{2}-\frac{(1-\mu_{1})(\mu_{j}^{2}+\mu_{1})}{\mu_{j}^{2}-\mu_{1}^{2}} x  + \frac{(1-\mu_{1})^{2}(\mu_{j}^{2}-1)}{4(\mu_{j}^{2}-\mu_{1}^{2})}  \right),
\end{equation}
where 
$$\mu_{1}=\sqrt{\frac{\lambda_{2\gamma}}{\lambda_{2\gamma-1}} \left(\frac{\lambda_{2\gamma-1}-1}{\lambda_{2\gamma}-1}\right)}, \quad 
\mu_{2}=\sqrt{\frac{\lambda_{2\gamma-1}-1}{\lambda_{2\gamma}-1}},$$
$$\mu_{j}=\sqrt{\left(\frac{\lambda_{2\gamma-1}-1}{\lambda_{2\gamma}-1}\right) \left( \frac{\lambda_{j-2}-\lambda_{2\gamma}}{\lambda_{j-2}-\lambda_{2\gamma-1}}\right)}, \quad j=3,...,g.$$

Observe that, in this model of $C$ one has that $\iota(x,y)=(x,-y)$ and 

$$\tau(x,y)=$$
$${\tiny \left( \frac{(\mu_{1}-1)^{2}(1-x)}{(4\mu_{1} x + (\mu_{1}-1)^{2}},
y \left( \frac{2\sqrt{\mu_{1}}(1-\mu_{1}^{2})^{2}}{(4 \mu_{1} x+(1-\mu_{1})^{2})^{g+1}}
\right) \prod_{j=2}^{g} \frac{(1-\mu_{1})}{\sqrt{\mu_{j}^{2}-\mu_{1}^{2}}}
\sqrt{\left((\mu_{j}^{2}-\mu_{1}^{2})(1-\mu_{1})^{2}+4\mu_{1}^{2} (1-\mu_{j}^{2})+4\mu_{1}(1-\mu_{1})(\mu_{1}+\mu_{j}^{2})  \right)}
 \right).}
 $$

\item If $g=2\gamma$, then, 
in this model, the underlying hyperelliptic Riemann surfaces of the orbifolds $S/\langle \tau \rangle$  and $S/\langle \tau\iota \rangle$
are respectively described by the curves
$$C_{1}: y^{2}=x(x-1) (x-\lambda_{g})\prod_{j=1}^{g-2} (x-\lambda_{j}),$$
$$C_{2}: y^{2}=x(x-1) (x-\lambda_{g-1}) \prod_{j=1}^{g-2} (x-\lambda_{j}).$$

\item If $g=2\gamma+1$, then, in this model,  
the underlying hyperelliptic Riemann surfaces of the orbifolds $S/\langle \tau \rangle$  and $S/\langle \tau\iota \rangle$
are respectively described by the curves
$$C_{1}: y^{2}=x(x-1) \prod_{j=1}^{g-2} (x-\lambda_{j}),$$
$$C_{2}: y^{2}=x(x-1) \prod_{j=1}^{g} (x-\lambda_{j}).$$

\item $JS \cong_{isog.} JC_{1} \times JC_{2}$.

\item $S$ is the fiber product of $C_{1}$ and $C_{2}$, where in each case we use the natural projection $(x,y) \mapsto x$.

\end{enumerate}

\end{corollary}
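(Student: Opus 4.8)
The plan is to exploit the fact that the hyperelliptic involution $\iota$ is central in ${\rm Aut}(S)$, so that $\tau$ and $\iota$ commute and $V := \langle \tau, \iota \rangle \cong {\mathbb Z}_2^2$; its three non-trivial involutions $\iota$, $\tau$, $\tau\iota$ give three intermediate quotients $S/\langle\iota\rangle$, $C_1 = S/\langle\tau\rangle$ and $C_2 = S/\langle\tau\iota\rangle$ of the Galois cover $S \to S/V$. Since $\iota \in V$, the quotient $S/V$ is a further quotient of $S/\langle\iota\rangle \cong \widehat{\mathbb C}$ and hence has genus zero; this already yields $g_{\langle\tau\rangle\langle\tau\iota\rangle} = 0$. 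I would set $x$ to be a coordinate on $S/V \cong \widehat{\mathbb C}$ and realize $C_1$ and $C_2$ as double covers of this $\widehat{\mathbb C}$ through their induced hyperelliptic structures.

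For the explicit normal form of part (1), I would descend $\tau$ to an involution $\bar\tau$ of $S/\langle\iota\rangle \cong \widehat{\mathbb C}$, which is a M\"obius involution with two fixed points permuting the $2g+2$ branch points of the hyperelliptic projection. After a M\"obius normalization placing the fixed points of $\bar\tau$ conveniently and sending three branch points to $\{\infty,0,1\}$, the remaining branch data is recorded by parameters $\lambda_1,\ldots,\lambda_g \in {\mathbb C}-\{0,1\}$. A rational change of the hyperelliptic coordinate, combined with the substitution defining the $\bar\tau$-invariant coordinate $x$ on $S/V$, produces the stated equation (\ref{ecuacion}) with the derived parameters $\mu_j$ and yields the displayed formula for $\tau(x,y)$. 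Parts (2) and (3) then follow by reading off the fixed-point data of $\tau$ and $\tau\iota$: the branch points of each $C_i$ are the images of the Weierstrass points together with those created by the respective involution, and the split depends on the parity of $g$ exactly as displayed, giving genera $\gamma,\gamma$ when $g=2\gamma$ and $\gamma,\gamma+1$ when $g=2\gamma+1$, so in all cases $g_{\langle\tau\rangle} + g_{\langle\tau\iota\rangle} = g$.

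With parts (1)--(3) in hand, part (4) is immediate from Corollary \ref{coroKR} applied to $H_1 = \langle\tau\rangle$, $H_2 = \langle\tau\iota\rangle$: condition (1) holds since both lie in the abelian group $V$; condition (2) is $g_{H_1 H_2} = g_V = 0$, established above; and condition (3) is the genus count $g = g_{H_1} + g_{H_2}$ just verified. For part (5), I would observe that in the chosen coordinates $C_1$ and $C_2$ are the double covers $y^2 = f_1(x)$ and $y^2 = f_2(x)$ of the common base $\widehat{\mathbb C}_x = S/V$ via $(x,y)\mapsto x$, and that the two quotient maps $S \to C_1$ and $S \to C_2$ induce a morphism $S \to C_1 \times_{\widehat{\mathbb C}} C_2$ over $\widehat{\mathbb C}_x$. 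On function fields this says that ${\mathbb C}(C_1)$ and ${\mathbb C}(C_2)$ are the two distinct index-two subfields of the $V$-extension ${\mathbb C}(S)/{\mathbb C}(x)$ fixed by $\langle\tau\rangle$ and $\langle\tau\iota\rangle$; since these subgroups intersect trivially their fixed fields compose to all of ${\mathbb C}(S)$, so the induced map is birational and $S$ is the (normalization of the) fiber product. A consistency check is that the third double cover appearing in the fiber product, $y^2 = f_1(x)f_2(x)$, has $f_1 f_2$ equal to a square times the two non-common linear factors, hence corresponds to the genus-zero quotient $S/\langle\iota\rangle$, exactly as it must.

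The \emph{main obstacle} is the explicit computation in part (1): producing the precise equation (\ref{ecuacion}) and the closed form of $\tau$ requires committing to a specific M\"obius normalization of the $\bar\tau$-symmetric branch configuration and then carrying out the coordinate change to the invariant variable $x$, which is where the parameters $\mu_j$ and the intricate $y$-multiplier for $\tau$ are generated. Everything there is elementary but lengthy hyperelliptic bookkeeping; by contrast, the decomposition (4) and the fiber-product identification (5) are formal consequences of the $V \cong {\mathbb Z}_2^2$ structure once the normal form is fixed.
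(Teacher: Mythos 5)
Your proposal is correct and follows essentially the same route as the paper: the authors also apply Corollary \ref{coroKR} to $H_{1}=\langle \tau\rangle$ and $H_{2}=\langle \tau\iota\rangle$ with $H_{1}H_{2}=\langle\iota,\tau\rangle\cong{\mathbb Z}_{2}^{2}$ of genus-zero quotient, and their explicit computation in part (1) starts from precisely the $\bar\tau$-symmetric normalization you describe, namely $y^{2}=(x^{2}-1)\prod_{j}(x^{2}-\mu_{j}^{2})$ with $\tau(x,y)=(-x,y)$, followed by a M\"obius change of coordinates and the invariant map $P(x)$. Your function-field argument for part (5) is a slightly more detailed version of what the paper dismisses as ``almost clear.''
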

\begin{proof}
Let $S_{\tau}$ (respectively, $S_{\iota\tau}$) the underlying Riemann surface structure of the orbifold $S/\langle \tau \rangle$ (respectively, $S/\langle \iota\tau \rangle$). Take $H_{1}=\langle \tau \rangle$ and $H_{2}=\langle \iota\tau \rangle$. Since $H_{1}H_{2}=\langle \iota, \tau \rangle \cong {\mathbb Z}_{2}^{2}$, $S/H_{1}H_{2}$ has genus zero and $S/H_{i}$ has genus $\gamma$. It follows from Corollary \ref{coroKR} that $JS \cong_{isog.} JS_{\tau} \times JS_{\iota\tau}$; providing part (4) of the theorem. Part (5) is almost clear.

\s

We only provide the proof for part (1) and (2) for the case that $g=2\gamma$ as for the other the arguments are similar.
Let us now construct algebraic curve representations for $S$, $S_{\tau}$ and $S_{\iota\tau}$ as desired.

It is well known that the hyperelliptic Riemann surface $S$ can be described by a hyperelliptic curve of the form
$$C_{0}: y^{2}=(x^{2}-1)\prod_{j=1}^{2\gamma}(x^{2}-\mu_{j}^{2}),$$
where $\mu_{1},\ldots, \mu_{2\gamma} \in {\mathbb C}-\{0, \pm 1\}$ and, for $i \neq j$, $\mu^{2}_{i} \neq \mu^{2}_{j}$. In this algebraic model, 
$$\iota(x,y)=(x,-y), \quad \tau(x,y)=(-x,y),$$
the regular branched two-cover defined by the hyperelliptic involution is given by
$$\pi(x,y)=x$$
and the induced involution, under $\pi$, by $\tau$ is
$$\widetilde{\tau}(x)=-x.$$

If we consider the M\"obius transformation 
$$T(x)=\left(\frac{1-\mu_{1}}{2}\right) \left(\frac{x+1}{x-\mu_{1}}\right),$$
then, since
$$T(1)=1,\; T(-1)=0, \; T(\mu_{1})=\infty, \; 
T(-\mu_{1})=-\frac{(1-\mu_{1})^{2}}{4\mu_{1}},$$
and for $j=2,\ldots,2\gamma$, 
$$T(\mu_{j})=\left( \frac{1-\mu_{1}}{2} \right) \left( \frac{\mu_{j}+1}{\mu_{j}-\mu_{1}}  \right),\;
T(-\mu_{j})=\left( \frac{1-\mu_{1}}{2} \right) \left( \frac{\mu_{j}-1}{\mu_{j}+\mu_{1}}  \right),$$
$S$ can be described by the hyperelliptic curve \eqref{ecuacion}.

A regular branched two-cover $P:C_{0} \to \widehat{\mathbb C}$, induced by $\widetilde{\tau}$, is given by
$$P(x)=\left( \frac{1-\mu_{2}^{2}}{1-\mu_{1}^{2}}\right) \left( \frac{x^{2}-\mu_{1}^{2}}{x^{2}-\mu_{2}^{2}}\right).$$ 

We have chosen $P$ so that $P(\pm 1)=1$, $P(\pm \mu_{1})=0$ and $P(\pm \mu_{2})=\infty$.

Let us set, for $j=3,...,2\gamma$,  
$$\lambda_{j-2}=P(\pm \mu_{j})=\left( \frac{1-\mu_{2}^{2}}{1-\mu_{1}^{2}}\right) \left( \frac{\mu_{j}^{2}-\mu_{1}^{2}}{\mu_{j}^{2}-\mu_{2}^{2}}\right),$$ 
and
$$\lambda_{2\gamma-1}=P(\infty)=\left( \frac{1-\mu_{2}^{2}}{1-\mu_{1}^{2}}\right), \quad
\lambda_{2\gamma}=P(0)=\frac{\mu_{1}^{2}}{\mu_{2}^{2}} \left( \frac{1-\mu_{2}^{2}}{1-\mu_{1}^{2}}\right).$$

The underlying hyperelliptic Riemann surface of the orbifold $S/\langle \tau \rangle$ is described by the hyperelliptic curve
$$C_{1}: y^{2}=x(x-1) \left( \prod_{j=1}^{2\gamma-2} (x-\lambda_{j}) \right) (x-\lambda_{2\gamma})$$ 
and the underlying hyperelliptic Riemann surface of the orbifold $S/\langle \tau\iota \rangle$ is described by the hyperelliptic curve
$$C_{2}: y^{2}=x(x-1) \left( \prod_{j=1}^{2\gamma-2} (x-\lambda_{j}) \right) (x-\lambda_{2\gamma-1}).$$ 

The equalities $P(0)=\lambda_{2\gamma}$ and $P(\infty)=\lambda_{2\gamma-1}$ imply that
$$\mu_{1}^{2}=\frac{\lambda_{2\gamma}}{\lambda_{2\gamma-1}} \left(\frac{\lambda_{2\gamma-1}-1}{\lambda_{2\gamma}-1}\right), \quad 
\mu_{2}^{2}=\frac{\lambda_{2\gamma-1}-1}{\lambda_{2\gamma}-1}.$$

Also, the equality $P(\pm \mu_{j})=\lambda_{j-2}$, for $j=3,...,2\gamma$, ensures that
$$\mu_{j}^{2}=\left(\frac{\lambda_{2\gamma-1}-1}{\lambda_{2\gamma}-1}\right) \left( \frac{\lambda_{j-2}-\lambda_{2\gamma}}{\lambda_{j-2}-\lambda_{2\gamma-1}}\right).$$

The involution $\tau$ can be directly computed.

\end{proof}

\s
%%%%%%%%%%%%%%%%%
\subsection{Genus two hyperelliptic Riemann surfaces}\label{genustwo}
Corollary \ref{hiper2}, in particular, provides the very well known fact that a closed Riemann surface of genus two admitting a conformal involution, different from the hyperelliptic one, has its Jacobian varity isogenous to the product of two elliptic curves (see for instance \cite{Riera-Rodriguez}).

\s
%%%%%%%%%%%%%%%%%
\subsection{Genus four hyperelliptic Riemann surfaces}
Let us consider a hyperelliptic Riemann surface $S$ of genus four admitting a conformal involution $\tau$ with exactly two fixed points. If $S_{1}$ and $S_{2}$ are the hyperelliptic Riemann surface structures of the orbifolds $S/\langle \tau \rangle$ and $S/\langle \tau\iota \rangle$, then Corollary \ref{hiper2} asserts that 
$JS \cong_{isog.} JS_{1} \times JS_{2}$. Now, if the Riemann surface $S_{j}$, for $j=1,2$, admits a conformal involution with two fixed points, then again $JS_{j} \cong_{isog.} E_{1,j} \times E_{2,j}$, where $E_{i,j}$ is elliptic curve and it will follow that  $JS \cong_{isog.} E_{1,1} \times E_{1,2} \times E_{2,1} \times E_{2,2}$. Next result provides explicitly such kind of examples.

\s
\noindent
\begin{corollary}\label{genusfour}
Let $\lambda_{1,1}, \lambda_{1,2} \in {\mathbb C}-\{0,1\}$ so that $\lambda_{1,1} \neq \lambda_{1,2}$ and $\lambda_{1,1}\lambda_{1,2} \neq 1$. Set
$$\lambda_{2,1}=\frac{1}{2} \left(-4\lambda_{1,2}+\frac{4+2\lambda_{1,1}-13\lambda_{1,2}+8\lambda_{1,2}^{2}-\lambda_{1,2}^{3}}{1-4\lambda_{1,2}+2\lambda_{1,1}\lambda_{1,2}+\lambda_{1,2}^{2}} \right)$$
$$\lambda_{2,2}=\frac{2\lambda_{1,1}+\lambda_{1,2}-4\lambda_{1,1}\lambda_{1,2}+\lambda_{1,1}^{2}\lambda_{1,2}}{1-4\lambda_{1,1}+2\lambda_{1,1}\lambda_{1,2}+\lambda_{1,1}^{2}}.$$

We also assume that $\lambda_{2,1}, \lambda_{2,2} \in {\mathbb C}-\{0,1\}$, $\lambda_{2,1} \neq \lambda_{1,1}$ and $\lambda_{2,2} \neq \lambda_{1,2}$ (this provides an open dense subspace in the $(\lambda_{1,1},\lambda_{1,2})$-space). Set

$$\lambda_{1}=-\frac{(1-\mu_{1,1})^{2}}{4\mu_{1,1}}; \;
\lambda_{2}=\left(\frac{1-\mu_{1,1}}{2}\right)\left(\frac{\mu_{2,1}+1}{\mu_{2,1}-\mu_{1,1}}\right);$$
$$\lambda_{2+j}=\left(\frac{1-\mu_{1,j}}{2}\right)\left(\frac{\mu_{2,j}-1}{\mu_{2,j}+\mu_{1,j}}\right), \; j=1,2,$$
where
$$\mu_{1,j}=\sqrt{\frac{\lambda_{2,j}}{\lambda_{1,j}} \left(\frac{\lambda_{1,j}-1}{\lambda_{2,j}-1}\right)}; \quad 
\mu_{2,j}=\sqrt{\frac{\lambda_{1,j}-1}{\lambda_{2,j}-1}}.$$

Also set
$$\mu_{1}=\sqrt{\frac{\lambda_{4}}{\lambda_{3}} \left(\frac{\lambda_{3}-1}{\lambda_{4}-1}\right)}; \quad 
\mu_{2}=\sqrt{\frac{\lambda_{3}-1}{\lambda_{4}-1}};$$
$$\mu_{3}=\sqrt{\left(\frac{\lambda_{3}-1}{\lambda_{4}-1}\right) \left(\frac{\lambda_{1}-\lambda_{4}}{\lambda_{1}-\lambda_{3}}\right)}; \quad \mu_{4}=\sqrt{\left(\frac{\lambda_{3}-1}{\lambda_{4}-1}\right) \left(\frac{\lambda_{2}-\lambda_{4}}{\lambda_{2}-\lambda_{3}}\right)}.$$

Let $S$ be the genus four hyperelliptic curve 
\begin{equation}
C: y^{2}=x(x-1)\left(x+\frac{(1-\mu_{1})^{2}}{4\mu_{1}}\right) \prod_{j=2}^{4} 
\left(x^{2}-\frac{(1-\mu_{1})(\mu_{j}^{2}+\mu_{1})}{\mu_{j}^{2}-\mu_{1}^{2}} x  + \frac{(1-\mu_{1})^{2}(\mu_{j}^{2}-1)}{4(\mu_{j}^{2}-\mu_{1}^{2})}  \right).
\end{equation}

Then $$JC \cong_{isog.} C_{11} \times C_{12} \times C_{21} \times C_{22},$$
where
$$C_{11}: y^{2}=x(x-1)(x-\lambda_{1,2}); \quad 
C_{12}: y^{2}=x(x-1)(x-\lambda_{1,1});$$
$$C_{21}: y^{2}=x(x-1)(x-\lambda_{2,2}); \quad 
C_{22}: y^{2}=x(x-1)(x-\lambda_{2,1}).$$

\end{corollary}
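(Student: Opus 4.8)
The plan is to apply Corollary \ref{hiper2} twice, peeling off one involution at each stage.

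First I would recognize the displayed curve $C$ as the canonical genus-four model \eqref{ecuacion}: the quantities $\mu_{1},\mu_{2},\mu_{3},\mu_{4}$ are built from $\lambda_{1},\lambda_{2},\lambda_{3},\lambda_{4}$ by exactly the relations of Corollary \ref{hiper2}(1) in the case $g=2\gamma=4$ (so that $2\gamma-1=3$). Hence $C$ carries an extra involution $\tau$, and Corollary \ref{hiper2}(2),(4) gives
$$JC\cong_{isog.} JC_{1}\times JC_{2},$$
where the two genus-two factors are
$$C_{1}\colon y^{2}=x(x-1)(x-\lambda_{4})(x-\lambda_{1})(x-\lambda_{2}),\qquad C_{2}\colon y^{2}=x(x-1)(x-\lambda_{3})(x-\lambda_{1})(x-\lambda_{2}).$$

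Next I would feed each of $C_{1},C_{2}$ back into Corollary \ref{hiper2}, now with $g=2$. For $C_{2}$ this is automatic: by the defining formulas its finite branch values $\lambda_{1},\lambda_{2},\lambda_{3}$ are precisely $-\tfrac{(1-\mu_{1,1})^{2}}{4\mu_{1,1}}=T_{1}(-\mu_{1,1})$ together with the images $T_{1}(\pm\mu_{2,1})$, where $T_{1}(x)=\tfrac{1-\mu_{1,1}}{2}\cdot\tfrac{x+1}{x-\mu_{1,1}}$; thus $C_{2}$ is the canonical model attached to the pair $(\lambda_{1,1},\lambda_{2,1})$, and Corollary \ref{hiper2}(2),(4) splits $JC_{2}\cong_{isog.}C_{12}\times C_{22}$, the elliptic curves with branch sets $\{0,1,\lambda_{1,1},\infty\}$ and $\{0,1,\lambda_{2,1},\infty\}$. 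The factor $C_{1}$, whose branch set $\{0,1,\lambda_{1},\lambda_{2},\lambda_{4},\infty\}$ mixes the two pairs, must instead be shown isomorphic to the canonical model of the pair $(\lambda_{1,2},\lambda_{2,2})$; granting this, the same corollary gives $JC_{1}\cong_{isog.}C_{11}\times C_{21}$, and transitivity of isogeny assembles the four factors into the claimed product.

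The hard part will be exactly this last identification, and it is what the two formulas for $\lambda_{2,1},\lambda_{2,2}$ are designed to produce. The curve $C_{1}$ and the pair-$(\lambda_{1,2},\lambda_{2,2})$ model already share the four branch points $0,1,\infty,\lambda_{4}$, so the natural route is to force their two remaining branch values to coincide as well, i.e. to set $\{\lambda_{1},\lambda_{2}\}$ equal to $\bigl\{-\tfrac{(1-\mu_{1,2})^{2}}{4\mu_{1,2}},\,T_{2}(\mu_{2,2})\bigr\}$ with $T_{2}(x)=\tfrac{1-\mu_{1,2}}{2}\cdot\tfrac{x+1}{x-\mu_{1,2}}$. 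Since $\lambda_{1},\lambda_{2}$ involve only the first pair while these target values involve only the second, the two equalities become two rational relations determining $\lambda_{2,1},\lambda_{2,2}$ as functions of the free parameters $\lambda_{1,1},\lambda_{1,2}$, and solving them yields precisely the displayed formulas. Carrying out this verification is a direct but lengthy manipulation of the nested radicals $\mu_{i,j}$; the stated open-dense conditions on $\lambda_{2,1},\lambda_{2,2}$ are what keep all six branch points distinct, so that each of the two extra involutions is genuine and Corollary \ref{hiper2} applies at every step.
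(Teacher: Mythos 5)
Your proposal is correct and follows essentially the same route as the paper: both arguments amount to two applications of Corollary \ref{hiper2}, with the displayed formulas for $\lambda_{2,1},\lambda_{2,2}$ encoding exactly the branch-set matching $\{\rho_{1,1},\rho_{2,1}\}=\{\rho_{1,2},\rho_{2,2}\}$ (the paper chooses $\rho_{1,1}=\rho_{2,2}$, $\rho_{2,1}=\rho_{1,2}$) that makes the second application possible, and like you the paper asserts rather than carries out the algebraic verification that these two equations solve to the stated rational expressions. The only cosmetic difference is direction: the paper builds $S$ bottom-up from the two genus-two quotients and deduces the matching condition from counting the cone points of $S/\langle\iota,\tau\rangle$, whereas you start from the given genus-four model and peel off the involutions top-down.
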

\begin{proof}
Let us start observing that in order for $S_{j}$ to admit a conformal involution with exactly two fixed points it must have an equation as follows (by Corollary \ref{hiper2})

$$C_{j}: y^{2}=x(x-1)( x-\rho_{1,j})(x-\rho_{2,j})(x-\rho_{3,j}),$$
where 
$$\rho_{1,j}=-\frac{(1-\mu_{1,j})^{2}}{4\mu_{1,j}}; \quad 
\rho_{2,j}=\left(\frac{1-\mu_{1,j}}{2}\right)\left(\frac{\mu_{2,j}+1}{\mu_{2,j}-\mu_{1,j}}\right); \quad
\rho_{3,j}=\left(\frac{1-\mu_{1,j}}{2}\right)\left(\frac{\mu_{2,j}-1}{\mu_{2,j}+\mu_{1,j}}\right);$$
$$\mu_{1,j}=\sqrt{\frac{\lambda_{2,j}}{\lambda_{1,j}}\left(\frac{\lambda_{1,j}-1}{\lambda_{2,j}-1}\right)}; \quad
\mu_{2,j}=\sqrt{\frac{\lambda_{1,j}-1}{\lambda_{2,j}-1}};$$
and
$\lambda_{1,j},\lambda_{2,j} \in {\mathbb C}-\{0,1\}$ with $\lambda_{1,j} \neq \lambda_{2,j}$.

At this point, we are working with $4$ parameters; $\lambda_{1,1}$, $\lambda_{2,1}$, $\lambda_{1,2}$ and $\lambda_{2,2}$.

Let $\iota_{j}$ be the hyperelliptic involution of $S_{j}$ (which is induced by $\iota$). The fixed points of $\iota_{j}$ are the images in $S_{j}$ of the fixed points of $\iota$ (this produces five of them) and the other is the  image of the fixed points of $\iota\tau$ for $j=1$ and the image of the fixed points of $\tau$ for $j=2$. 

The quotient $S_{j}/\langle \iota_{j} \rangle$ is the Riemann sphere whose cone points are $\infty$, $0$, $1$, $\rho_{1,j}$, $\rho_{2,j}$ (these are the projections of those fixed points of $\iota_{j}$ which are comming from the fixed points of $\iota$) and a point $\rho_{3,j}$, which is the image of the fixed point of $\iota_{j}$ which is comming from the fixed points of $\iota\tau$ for $j=1$ and the image of the fixed points of $\tau$ for $j=2$.

Since $\{\infty,0,1,\rho_{1,1},\rho_{2,1},\rho_{3,1},\rho_{1,2},\rho_{2,2},\rho_{3,2}\}$ are also the cone points of $S/\langle \iota, \tau \rangle$, we may see that 
$\{\rho_{1,1},\rho_{2,1}\} =\{\rho_{1,2},\rho_{2,2}\}$, $\rho_{3,1} \neq \rho_{3,2}$.

Let us assume that $\rho_{1,1}=\rho_{2,2}$ and $\rho_{2,1}=\rho_{1,2}$. This is equivalent to have
$$\lambda_{2,1}=\frac{1}{2} \left(-4\lambda_{1,2}+\frac{4+2\lambda_{1,1}-13\lambda_{1,2}+8\lambda_{1,2}^{2}-\lambda_{1,2}^{3}}{1-4\lambda_{1,2}+2\lambda_{1,1}\lambda_{1,2}+\lambda_{1,2}^{2}} \right)$$
$$\lambda_{2,2}=\frac{2\lambda_{1,1}+\lambda_{1,2}-4\lambda_{1,1}\lambda_{1,2}+\lambda_{1,1}^{2}\lambda_{1,2}}{1-4\lambda_{1,1}+2\lambda_{1,1}\lambda_{1,2}+\lambda_{1,1}^{2}}.$$

The above provides two equations for the four parameters $\lambda_{1,1}$, $\lambda_{2,1}$, $\lambda_{1,2}$ and $\lambda_{2,2}$.

The condition $\rho_{3,1} \neq \rho_{3,2}$ is now equivalent to have 
$$\lambda_{1,1} \neq \lambda_{1,2}, \quad \lambda_{1,1}\lambda_{1,2} \neq 1.$$

Now, Corollary \ref{hiper2} tells us how to write the equation for $S$ in terms of the above values, where in this case, $\lambda_{1}=\rho_{1,1}$, $\lambda_{2}=\rho_{2,1}$, $\lambda_{3}=\rho_{3,1}$ and $\lambda_{4}=\rho_{3,2}$.

\end{proof}

\s
\noindent
\begin{remark}
In \cite{Paulhus1, Paulhus2, Paulhus3} it is shown a particular hyperelliptic Riemann surface of genus four whose Jacobian variety is isogenous to a product $E_{1}^{2} \times E_{2}^{2}$, where $E_{1}$ and $E_{2}$ are elliptic curves. Such a curve is given by
$$P: y^{2}=x(x^{4}-1)(x^{4}+2\sqrt{-3} \; x^{2}+1),$$
but it does not have an automorphism of order two differently form the hyperelliptic involution; so it cannot be produced with our method. Anyway, if we consider, for instance, $\lambda_{1,1}=-1$ and $\lambda_{1,2}=1/5$, then 
$j(\lambda_{1,1})=j(\lambda_{2,1})=27/4$ and $j(\lambda_{1,2})=j(\lambda_{2,2})=9261/400$, where $j$ is the Klein-elliptic function
$$j(\lambda)=\frac{(1-\lambda+\lambda^{2})^{3}}{\lambda^{2}(1-\lambda)^{2}},$$ and the Jacobian variety of the hyperelliptic Riemann surface $C$ constructed in Corollary \ref{genusfour} will satisfy $JC \cong_{isog} C_{1,1}^{2} \times C_{1,2}^{2}$. Similar kind of examples can be produced from Corollary \ref{genusfour} by considering appropriated parameters $\lambda_{1,1}$ and $\lambda_{1,2}$. For instance, if we take $\lambda_{1,1}=4+\sqrt{11}$ and $\lambda_{1,2}=1-\lambda_{1,1}=-3-\sqrt{11}$,
then
$\lambda_{2,1}=(3-\sqrt{11})/2$ and $\lambda_{2,2}=(1+\sqrt{11})/2$; so
$j(\lambda_{1,1})=j(\lambda_{1,2})=j(\lambda_{2,1})=(1489+294 \sqrt{11})=50$ and $j(\lambda_{2,2})=343/50$, and $JC$ is isogenous to $C_{11}^{3} \times C_{22}$.

\end{remark}

\s

%%%%%%%%%%%%%%%%%
%%%%%%%%%%%%%%%%%
\section{Jacobian variety of generalized Fermat curves of type $(p,n)$, with $p$ a prime integer}
In this section we describe the main result of the paper, that is, an isogenous decomposition of the Jacobian variety of a generalized Fermat curve of type $(p,n)$, where $p$ is a prime integer. In the next two sections we make this explicit for $p \in \{2,3\}$. 

\s
%%%%%%%%%%%%%%%%%
\subsection{A counting formula}

\s
\noindent
\begin{lemma}\label{formulita1}
Let $q \geq 2$ and $r \geq 2$ be integers and let 
$\psi_{q}(r)$ be the number of different tuples $(\alpha_{2},\ldots,\alpha_{r})$ so that 
$\alpha_{j} \in \{1,2,\ldots,q-1\}$, and $\alpha_{2}+\cdots+\alpha_{r} \equiv -1 \mod(q)$. 
Then 
$$
\psi_{q}(r)=(-1)^{r+1}\left( \frac{(1-q)^{r-1}-1}{q}\right).
$$
\end{lemma}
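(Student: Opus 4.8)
The plan is to set up a recursion on the number of variables and solve the resulting first-order linear recurrence. For integers $m \geq 1$ and a residue $s$ modulo $q$, let $f_m(s)$ denote the number of tuples $(\alpha_2, \ldots, \alpha_{m+1})$ with each $\alpha_j \in \{1, \ldots, q-1\}$ and $\alpha_2 + \cdots + \alpha_{m+1} \equiv s \pmod q$, so that $\psi_q(r) = f_{r-1}(-1)$. Summing over all residues, the total number of such tuples is $(q-1)^m$.

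First I would establish the key recursion
$$f_m(s) = (q-1)^{m-1} - f_{m-1}(s), \qquad m \geq 2.$$
This follows by conditioning on the last coordinate $\alpha_{m+1} = a$: as $a$ runs over $\{1, \ldots, q-1\}$, the residue $s-a$ required of the remaining $m-1$ coordinates runs over every residue class \emph{except} $s$ itself (since $a \equiv 0$ is excluded). Hence $f_m(s) = \sum_{t \neq s} f_{m-1}(t) = (q-1)^{m-1} - f_{m-1}(s)$, the last step using $\sum_t f_{m-1}(t) = (q-1)^{m-1}$. The recursion is uniform in $s$, which is what keeps the argument elementary and independent of whether $q$ is prime.

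Next I would solve the recurrence. The homogeneous part contributes $C(-1)^m$, and a particular solution of the form $a(q-1)^{m-1}$ forces $a = (q-1)/q$, giving
$$f_m(s) = \frac{(q-1)^m}{q} + C_s (-1)^m.$$
The constant is pinned down by the base case $m=1$, where $f_1(s) = 1$ if $s \not\equiv 0$ and $f_1(s)=0$ if $s \equiv 0$. For $s \equiv -1$ (and $q \geq 2$, so $-1 \not\equiv 0$) we have $f_1(-1) = 1$, which yields $C_{-1} = -1/q$ and therefore
$$\psi_q(r) = f_{r-1}(-1) = \frac{(q-1)^{r-1} - (-1)^{r-1}}{q}.$$
I would then rewrite this in the stated form by factoring $(q-1)^{r-1} = (-1)^{r-1}(1-q)^{r-1}$ and using $(-1)^{r-1} = (-1)^{r+1}$, obtaining exactly $(-1)^{r+1}\bigl((1-q)^{r-1}-1\bigr)/q$.

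The only genuinely delicate points are the combinatorial justification of the recursion (checking that $a \mapsto s-a$ is a bijection onto the residues $\neq s$) and the correct identification of the base case; the algebra afterward is routine. As a cross-check I would also mention the equivalent roots-of-unity filter: with $\omega = e^{2\pi i/q}$ one has $\psi_q(r) = \frac{1}{q}\sum_{k=0}^{q-1} \omega^{k}\left(\sum_{a=1}^{q-1}\omega^{ka}\right)^{r-1}$, and since $\sum_{a=1}^{q-1}\omega^{ka} = -1$ for $k \neq 0$ while the $k=0$ term equals $(q-1)^{r-1}$, this collapses to the same closed form directly.
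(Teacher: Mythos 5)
Your proof is correct and follows essentially the same route as the paper: both arguments reduce to the recursion $\psi_{q}(r)=(q-1)^{r-2}-\psi_{q}(r-1)$ (obtained by noting that the last coordinate is uniquely determined and the remaining partial sum avoids exactly the class $-1$) and then solve this first-order linear recurrence to reach the closed form. Your bookkeeping via $f_m(s)$ and the roots-of-unity cross-check are harmless embellishments of the paper's more direct unrolling of the recursion into a geometric sum.
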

\begin{proof}
Let us consider a tuple $(\alpha_{2},\ldots,\alpha_{r-1},\alpha_{r})$, where $\alpha_{j} \in \{1,\ldots,q-1\}$ and
$\alpha_{2}+\cdots+\alpha_{r} \equiv -1 \mod(q)$. Since $\alpha_{r}$ is not congruent to $0$ mod $q$, we must have that 
$\alpha_{2}+\cdots+\alpha_{r-1}$ cannot be congruent to $-1$ mod $q$. But this last sum can be congruent to any value inside $\{0,1,\ldots,q-2\}$. We also note that $\alpha_{r}$ gets uniquely determined by $\alpha_{1},\ldots, \alpha_{r-1}$. In this way, 
$\psi_{q}(r)=(q-1)^{r-2}-\psi_{q}(r-1)$. This recurrence asserts that
$$\psi_{q}(r)=\sum_{k=2}^{r} (-1)^{k} (q-1)^{r-k}=(-1)^{r}\sum_{k=2}^{r} (1-q)^{r-k}=(-1)^{r}\sum_{k=0}^{r-2} (1-q)^{k}=(-1)^{r+1}\left( \frac{(1-q)^{r-1}-1}{q}\right).$$

\end{proof}

\s

For instance, $\psi_{2}(2s)=1$, $\psi_{2}(2s-1)=0$, and for $q \geq 3$ we have $\psi_{q}(2)=1$, $\psi_{q}(3)=q-2$, $\psi_{q}(4)=q^{2}-3q+3$ and $\psi_{q}(5)=q^{3}-4q^{2}+6q-4$. 

\s

We will need the following equality, which permits to write the genus of a generalized Fermat curve as the sum of the genus of cyclic gonal curves (we will use this for the prime case). 

\s
\noindent
\begin{lemma}\label{conteo}
Let $n,q \geq 2$ be integers with $n+1 \geq r_{q}$, where $r_{2}=4$ and $r_{q}=3$ for $q \geq 3$. Then
$$1+ \frac{\phi(q,n)}{2}=\sum_{r=r_{q}}^{n+1}  \binom{n+1}{r}  \frac{(r-2)(q-1)}{2} \psi_{q}(r).$$
\end{lemma}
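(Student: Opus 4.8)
The goal is to verify the identity
$$
1+\frac{\phi(q,n)}{2}=\sum_{r=r_{q}}^{n+1}\binom{n+1}{r}\frac{(r-2)(q-1)}{2}\psi_{q}(r),
$$
where $\phi(q,n)=q^{n-1}((n-1)q-n-1)$ and $\psi_{q}(r)$ is the closed form from Lemma \ref{formulita1}. The plan is to treat this as a purely combinatorial/algebraic identity and reduce it to manipulating a single generating-function-style sum. First I would substitute the explicit value $\psi_{q}(r)=(-1)^{r+1}\bigl((1-q)^{r-1}-1\bigr)/q$ into the right-hand side, so that the whole sum becomes a linear combination of two sums of the shape $\sum_{r}\binom{n+1}{r}(r-2)(-1)^{r+1}(1-q)^{r-1}$ and $\sum_{r}\binom{n+1}{r}(r-2)(-1)^{r+1}$. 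The factor $\tfrac{q-1}{2q}$ can be pulled out front, leaving sums that are amenable to the binomial theorem.

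The key computational step is to evaluate sums of the form $\sum_{r}\binom{n+1}{r}r\,x^{r}$ and $\sum_{r}\binom{n+1}{r}x^{r}$ via $(1+x)^{n+1}$ and its derivative, namely $\sum_{r}\binom{n+1}{r}r\,x^{r}=(n+1)x(1+x)^{n}$. I would apply these with $x=-(1-q)=q-1$ for the first piece (after absorbing the signs $(-1)^{r+1}(1-q)^{r-1}$ into a power of $x=q-1$) and with $x=-1$ for the second piece; the factor $(r-2)=r-2$ splits each evaluation into an $r$-weighted sum and a plain sum. The main subtlety is bookkeeping the lower summation limit: the binomial identities are clean when summed over all $r$ from $0$ to $n+1$, but the stated sum starts at $r_{q}$ (which is $3$ for $q\geq 3$ and $4$ for $q=2$). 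I would therefore check that the omitted low-order terms vanish or cancel: for $q\geq 3$ the terms $r=0,1,2$ contribute zero because either $\binom{n+1}{r}(r-2)$ or $\psi_{q}(r)$ forces the factor $(r-2)\psi_{q}(r)$ to vanish at $r=2$, and the $r=0,1$ terms are not genuinely present (one needs $r\geq 2$ for $\psi_{q}(r)$ to be defined, and $\psi_{q}(2)=1$ with $(r-2)=0$), while for $q=2$ the vanishing of $\psi_{2}$ at odd $r$ together with $(r-2)=0$ at $r=2$ lets me extend the range down to $r=0$ or $2$ freely.

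The expected main obstacle is precisely this endpoint-reconciliation: ensuring that extending the summation range (to invoke the closed-form binomial evaluations over $0\le r\le n+1$) introduces only terms that are provably zero, and handling the $q=2$ versus $q\geq 3$ cases uniformly despite their different $r_{q}$. Once the range is extended, the remaining work is to collect $(q-1)/(2q)$ times the evaluated binomial sums, simplify $(n+1)(q-1)\cdot(\text{power of }q)$ against the closed form, and recognize the result as $q^{n-1}((n-1)q-n-1)/2+1$. I expect this final simplification to reduce to a short algebraic rearrangement: the dominant term $(n+1)(q-1)(q-1)^{\,n-1}\cdot\tfrac{1}{2q}$ type contribution should reconstruct $\phi(q,n)/2$, and the leftover constants should assemble into the $+1$. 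I would finish by confirming the identity against the small-$r$ values of $\psi_{q}$ recorded after Lemma \ref{formulita1} as a consistency check.
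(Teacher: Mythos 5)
Your plan is essentially the paper's proof: substitute the closed form of $\psi_{q}(r)$ from Lemma \ref{formulita1} so that the right-hand side becomes $\tfrac{q-1}{2q}\sum_{r}\binom{n+1}{r}(r-2)\bigl[(q-1)^{r-1}-(-1)^{r-1}\bigr]$, evaluate the full sum over $0\le r\le n+1$ via $(1+x)^{n+1}$ and its derivative at $x=q-1$ and $x=-1$, and then reconcile the lower summation limit. (The paper works with $f(x)=\tfrac{q-1}{2q}(1+x)^{n+1}x^{-2}$ and its derivative, which is the same computation with the factor $r-2$ built in.)

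There is, however, one concrete error in your endpoint analysis, and it sits exactly where the $+1$ on the left-hand side has to come from. You assert that for $q\ge 3$ the terms $r=0,1,2$ all contribute zero. The terms $r=1$ and $r=2$ do vanish (the first because $(q-1)^{0}-(-1)^{0}=0$, the second because of the factor $r-2$), and for $q=2$ the term $r=3$ also vanishes since $(q-1)^{2}-(-1)^{2}=0$. But the $r=0$ term of the extended sum is
$$\frac{q-1}{2q}\binom{n+1}{0}(0-2)\Bigl[(q-1)^{-1}-(-1)^{-1}\Bigr]=\frac{q-1}{2q}\cdot(-2)\cdot\frac{q}{q-1}=-1,$$
not $0$. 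Since the full sum over $0\le r\le n+1$ evaluates to $\phi(q,n)/2$, subtracting this $-1$ is precisely what produces $1+\phi(q,n)/2$ for the sum starting at $r_{q}$. Your closing remark that ``leftover constants should assemble into the $+1$'' is the correct instinct, but it contradicts your claim that all low-order terms vanish; if you carried out the plan believing that claim, you would arrive at the identity without the $+1$, which is false. Fix the $r=0$ bookkeeping and the argument goes through exactly as in the paper.
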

\begin{proof} By Lemma \ref{formulita1}, the equality we want to obtain is 
$$
1+\frac{q^{n-1}((n-1)q-n-1)}{2}=\frac{(q-1)}{2q}\sum_{r=r_{q}}^{n+1}  \binom{n+1}{r}  (r-2)((q-1)^{r-1}-(-1)^{r-1}).
$$

Let us consider the function 
$$f(x)=\frac{(q-1)(1+x)^{n+1}}{2qx^{2}}=\frac{(q-1)}{2q}\sum_{r=0}^{n+1} \binom{n+1}{r} x^{r-2}.$$

Then taking the derivative with respect to $x$, the above provides the following equality
$$\frac{(q-1)(1+x)^{n}((n-1)x-2)}{2qx^{3}}=\frac{(q-1)}{2q}\sum_{r=0}^{n+1} \binom{n+1}{r} (r-2)x^{r-3}.$$

Evaluating the last one at $x=q-1$ provides the equality
$$\frac{q^{n-1}((n-1)q-n-1)}{2}=\frac{(q-1)}{2q}\sum_{r=0}^{n+1} \binom{n+1}{r}(r-2)(q-1)^{r-1}$$
and evaluating at $x=-1$ provides the equality
$$0=\frac{(q-1)}{2q}\sum_{r=0}^{n+1} \binom{n+1}{r}(r-2)(-1)^{r-1}.$$

By subtracting both equalities one obtains 
$$\frac{q^{n-1}((n-1)q-n-1)}{2}=\frac{(q-1)}{2q}\sum_{r=0}^{n+1}  \binom{n+1}{r}  (r-2)((q-1)^{r-1}-(-1)^{r-1}).
$$
from which the desired equality follows.
\end{proof}

\s
\noindent
\begin{remark}
A way to see $\psi_{p}(r)$, where $p \geq 2$ is  prime,  in terms of the generalized Fermat group is the following. Let us consider a generalized Fermat curve $S$ of type $(p,n)$ and its corresponding generalized Fermat group $H_{0}=\langle a_{1},\ldots,a_{n}\rangle \cong {\mathbb Z}_{p}^{n}$. Let us fix $r$ of the cone points $\infty$, $0$, $1$, $\lambda_{1}, \ldots, \lambda_{n-2}$. Let $R$ be a closed Riemann surface admitting a group $K \cong {\mathbb Z}_{p}$ as a group of conformal automorphisms so that $R/K$ is the Riemann sphere whose cone points are exactly the chosen $r$ points. Let $P:R \to \widehat{\mathbb C}$ be a regular branched cover with $K$ as its deck group and whose branch values are these $r$ points. If we lift the $n+1-r$ other points via $P$ to $R$, then we obtain a Riemann orbifold ${\mathcal O}_{R}$, of signature $((r-2)(p-1)/2;p,\stackrel{p(n+1-r)}{\ldots},p)$, admitting $K$ as a group of conformal automorphisms. It follows the existence of a subgroup $H_{K} \cong {\mathbb Z}_{p}^{n-1}$ of $H_{0}$ so that ${\mathcal O}_{R}=S/H_{K}$. In this way, $\psi_{p}(r)$ equals to the number of subgroups $H \cong {\mathbb Z}_{p}^{n-1}$ of $H_{0}$ with $S/H$ of signature $((r-2)(p-1)/2;p,\stackrel{p(n+1-r)}{\ldots},p)$.
\end{remark}

\s
%%%%%%%%%%%%%%%%%%
\subsection{Some cyclic $p$-gonal covers}
Let $p \geq 2$ be a prime integer and set $r_{2}=4$ and, for $p \geq 3$, set $r_{p}=3$. 

Let us fix an 
integer $r \geq r_{p}$ (we assume $r$ even if $p=2$) and we also fix a collection of $r$  pairwise different complex numbers $\mu_{1},\ldots,\mu_{r} \in {\mathbb C}$. 

Let $S$ be closed Riemann surface $S$ admitting a group $K \cong {\mathbb Z}_{p}$ as a group of conformal automorphism so that $S/K$ has signature $(0;p,\stackrel{r}{\ldots},p)$ and whose cone points are $\mu_{1}$,..., $\mu_{r}$. By the Riemann-Hurwitz formula, the genus of $S$ is
$\gamma=(r-2)(p-1)/2$. Also, the surface
$S$ can be described by a cyclic $p$-gonal curve of the form
$$C(\alpha_{1},\ldots,\alpha_{r}): y^{p}=\prod_{j=1}^{r}(x-\mu_{j})^{\alpha_{j}},$$
where $\alpha_{j} \in \{1,2,\ldots,p-1\}$ and $\alpha_{1}+\cdots+\alpha_{r} \equiv 0 \mod(p)$.  In this curve model, a generator of $K$ is given by $\tau(x,y)=(x,e^{2\pi i/p}y)$. 

If $\beta \in \{1,2,\ldots,p-1\}$, then the curves $C(\alpha_{1},\ldots,\alpha_{r})$ and $C(\beta\alpha_{1},\ldots,\beta\alpha_{r})$ define isomorphic Riemann surfaces. It follows that we may assume $\alpha_{1}=1$. Lemma \ref{formulita1} tell us that the number of such (normalized) different cylic $p$-gonal curves is $\psi_{p}(r)$.

We should note that two curves $C(1,\alpha_{2},\ldots,\alpha_{r})$ and $C(1,\beta_{2},\ldots,\beta_{r})$ may be isomorphic even if the ordered tuples $(\alpha_{2},\ldots,\alpha_{r})$ and $(\beta_{2},\ldots,\beta_{r})$ are different; so $\psi_{p}(r)$ is not in general equal to the number of isomorphic classes of cyclic $p$-gonal curves.
\s

%%%%%%%%%%%%%%%%%
\subsection{Decomposition structure of the Jacobian variety of a generalized Fermat curves of type $(p,n)$, with $p \geq 2$ a prime integer}
Let us consider a generalized Fermat curve of type $(p,n)$, with $n+1\geq r_{p}$, say 
$$
S:=C^{p}(\lambda_{1},\ldots,\lambda_{n-2})=\left\{ \begin{array}{ccc}
x_{1}^{p}+x_{2}^{p}+x_{3}^{p}&=&0\\
\lambda_{1}x_{1}^{p}+x_{2}^{p}+x_{4}^{p}&=&0\\
\lambda_{2}x_{1}^{p}+x_{2}^{p}+x_{5}^{p}&=&0\\
\vdots & \vdots& \vdots\\
\lambda_{n-2}x_{1}^{p}+x_{2}^{p}+x_{n+1}^{p}&=&0
\end{array}
\right\} \subset {\mathbb P}^{n},
$$
where $\lambda_{1},\ldots,\lambda_{n-2} \in {\mathbb C}-\{0,1\}$ and, for $i \neq j$, $\lambda_{i} \neq \lambda_{j}$. The Riemann surface $S$ has genus $g=1+\phi(p,n)/2 \geq 1$ and it admits, as a group of conformal automorphisms, the generalized Fermat group $H_{0} \cong {\mathbb Z}_{p}^{n}$; generated by the transformations
$$a_{j}([x_{1}:\cdots:x_{n+1}])=[x_{1}:\cdots:x_{j-1}: \omega_{p} x_{j}:x_{j+1}:\cdots:x_{n+1}], \; j=1,\ldots,n,$$ 
where $\omega_{p}=e^{2 \pi i/p}$. We set $a_{n+1}=a_{1}^{-1}\cdots a_{n}^{-1}$, that is,
$$a_{n+1}([x_{1}:\cdots:x_{n+1}])=[x_{1}:\cdots: x_{n}: \omega_{p} x_{n+1}].$$

Let us make a choice of $r$ points inside $\{\infty,0,1,\lambda_{1},\ldots,\lambda_{n-2}\}$, where $n+1 \geq r \geq r_{p}$. We assume $r$ even if $p=2$. If $R$ is a closed Riemann surface admitting a group $K \cong {\mathbb Z}_{p}$ as group of conformal automorphism so that $R/K$ has signature $(0;p,\stackrel{r}{\ldots},p)$ and cone points are the $r$ chosen points, then there must be a subgroup $H_{R} \cong {\mathbb Z}_{p}^{n-1}$ of $H_{0}$ so that the quotient orbifold $S/H_{R}$ has underlying Riemann surface isomorphic to $R$ and so that the quotient orbifold under the action of $H/H_{R} \cong {\mathbb Z}_{p}$ over such a Riemann surface structure has signature $(0;p,\stackrel{r}{\ldots},p)$ and cone points the $r$ chosen points. Such a group $H_{R}$ must contain the elements $a_{j}$ corresponding to the complement of the $r$ chosen points but cannot contains the ones corresponding to these chosen points.

For any two different subgroups $H_{1}, H_{2}$ of $H_{0}$, with $H_{1} \cong {\mathbb Z}_{p}^{n-1} \cong H_{2}$ and with the above quotient property (maybe for different values of $r$), we clearly must have that $H_{1}H_{2}=H_{0}$.

All the above and Lemma \ref{conteo} permits us to apply Corollary \ref{coroKR} in order to obtain the following.

\s
\noindent
\begin{theorem}\label{(p,n)}
Let $(S,H)$ be a generalized Fermat pair of type $(p,n)$, where $p$ is a prime integer. Then 
$$JS \cong_{isog.} \prod_{H_r} JS_{H_r},$$
where $H_r$ runs over all subgroups of $H_{0}$ which are isomorphic to ${\mathbb Z}_{p}^{n-1}$ and such that $S/H_r$ has genus at least one, and $S_{H_r}$ is the underlying Riemann surface of the orbifold $S/H_r$.

The cyclic $p$-gonal curves $S_{H_r}$ runs over all curves of the form
$$y^{p}=\prod_{j=1}^{r}(x-\mu_{j})^{\alpha_{j}},$$
where $\{\mu_{1},\ldots,\mu_{r}\} \subset \{\infty,0,1,\lambda_{1},\ldots,\lambda_{n-2}\}$, $\mu_{i} \neq \mu_{j}$ if $i \neq j$,  $\alpha_{j} \in \{1,2,\ldots,p-1\}$ satisfying the following.
\begin{enumerate}
\item[(i)] If every $\mu_{j} \neq \infty$, then $\alpha_{1}=1$, $\alpha_{2}+\cdots+\alpha_{r} \equiv p-1 \mod(p)$;
\item[(ii)] If some $\lambda_{a}=\infty$, then  $\alpha_{1}+\cdots+\alpha_{a-1}+\alpha_{a+1}+\alpha_{r} \equiv p-1 \mod(p)$.
\end{enumerate}  
\end{theorem}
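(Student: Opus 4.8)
The plan is to apply the Kani--Rosen Corollary \ref{coroKR} to the family $\{H_r\}$ of all subgroups of $H_0$ isomorphic to ${\mathbb Z}_p^{n-1}$ whose quotient orbifold has positive genus. First I would fix a clean parametrization of these subgroups. Since $H_0\cong{\mathbb Z}_p^n$ is elementary abelian and $p$ is prime, every index-$p$ subgroup $H_r$ is the kernel of a surjection $\chi:H_0\to{\mathbb Z}_p$, and two surjections share a kernel exactly when they differ by scaling by an element of ${\mathbb Z}_p^\ast$. Writing $\alpha_j=\chi(a_j)$ for $j=1,\ldots,n+1$, the relation $a_1\cdots a_{n+1}=I$ forces $\alpha_1+\cdots+\alpha_{n+1}\equiv 0\pmod p$, so each $H_r$ corresponds, up to scaling, to such a tuple $(\alpha_1,\ldots,\alpha_{n+1})$.

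Next I would extract the geometric meaning of this data. The quotient $S/H_r$ carries a residual ${\mathbb Z}_p\cong H_0/H_r$ action with $(S/H_r)/(H_0/H_r)=S/H_0=\widehat{\mathbb C}$, so $S_{H_r}$ is a cyclic $p$-gonal curve over the sphere. By Theorem \ref{cor2} the fixed points of $a_j$ lie over the cone point $\mu_j=\pi(\mathrm{Fix}(a_j))$, so the cover $S_{H_r}\to\widehat{\mathbb C}$ is branched precisely over those $\mu_j$ with $a_j\notin H_r$, that is with $\alpha_j\neq 0$, with local monodromy $\alpha_j$ there. Hence $S_{H_r}$ is the curve $y^p=\prod_{\alpha_j\neq 0}(x-\mu_j)^{\alpha_j}$, whose genus is $(r-2)(p-1)/2$ by Riemann--Hurwitz, where $r$ is the number of nonzero $\alpha_j$; this is positive exactly when $r\geq r_p$. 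The normalizations in (i)--(ii) are just a choice of scaling representative (exponent $1$ at a finite first branch point, respectively at $\infty$), and the stated congruence in each case is the vanishing (or not) of the total monodromy, which records whether $\infty$ is itself a branch point.

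It then remains to verify the three hypotheses of Corollary \ref{coroKR}. Hypothesis (1) is automatic since $H_0$ is abelian. For hypothesis (2), any two distinct index-$p$ subgroups $H_i\neq H_j$ of ${\mathbb Z}_p^n$ intersect in index $p^2$, so $H_iH_j$ has order $p^n$ and hence $H_iH_j=H_0$; thus $g_{H_iH_j}=g_{H_0}=0$ because $S/H_0$ is the sphere. Hypothesis (3), that $g=\sum_{H_r} g_{H_r}$, is exactly Lemma \ref{conteo}: grouping the subgroups by their level $r$, there are $\binom{n+1}{r}$ choices of the $r$ branch points and, since the scaling action of ${\mathbb Z}_p^\ast$ on admissible tuples is free, exactly $\psi_p(r)$ subgroups per choice, each of genus $(r-2)(p-1)/2$. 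Summing gives $\sum_{r=r_p}^{n+1}\binom{n+1}{r}\frac{(r-2)(p-1)}{2}\psi_p(r)=1+\phi(p,n)/2=g$, and Corollary \ref{coroKR} yields $JS\cong_{isog.}\prod_{H_r} JS_{H_r}$.

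The verifications in the last paragraph (commutativity, $H_iH_j=H_0$, and the substitution into Lemma \ref{conteo}) are routine. The step requiring the most care is the second paragraph: correctly identifying the underlying surface $S_{H_r}$ with the stated $p$-gonal curve carrying the exponents $\alpha_j$, and bookkeeping the monodromy at $\infty$ so that the congruence conditions (i) and (ii) emerge precisely as written.
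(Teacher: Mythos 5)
Your proposal is correct and follows essentially the same route as the paper: both identify the index-$p$ subgroups with nowhere-zero exponent tuples up to ${\mathbb Z}_p^{*}$-scaling, observe that any two distinct such subgroups generate $H_0$ (so the pairwise quotients have genus zero), invoke Lemma \ref{conteo} for the genus sum, and conclude via Corollary \ref{coroKR}. Your character-kernel parametrization is merely a crisper phrasing of the paper's description of which generators $a_j$ lie in each $H_r$.
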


\s
\noindent
\begin{example}[Classical Fermat curves]
Let us consider, as an example, the classical Fermat curve $S=F_{p}=\{x_{1}^{p}+x_{2}^{p}+x_{3}^{p}=0\} \subset {\mathbb P}^{2}$, where $p \geq 2$ is a prime, which has genus $g=(p-1)(p-2)/2$. If ${\mathcal U}_{p}=\{1,\ldots,p-2\}$, then it is well known that $JS \cong_{isog.} \prod_{\alpha\in{\mathcal U}_{p}} A_{\alpha}$, where $A_{\alpha}$ are abelian varieties of CM type (in the sense of Shimura and Taniyama) (see, for instance, \cite{Rohrlich}). In fact, $A_{\alpha}=JS_{\alpha}$, where $S_{\alpha}$ is the cyclic $p$-gonal curve $$y^{p}=x(x-1)^{\alpha}.$$

This fact can be seen from Theorem \ref{(p,n)}. The above cyclic $p$-gonal curves correspond to the surfaces $F_{p}/\langle a_{1}a_{2}^{l}\rangle$, where $l \in \{2,\ldots,p-1\}$.

In the particular case $p=5$, each $C_{j}$ is isomorphic to the unique Riemann surface of genus two (up to isomorphisms) admitting an automorphism of order five; this being $C: y^{2}=x^{5}-1$. So, $JF_{5} \cong_{isog.} JC^{3}$.

\end{example}

\s

%%%%%%%%%%%%%%%%%
%%%%%%%%%%%%%%%%%
\section{Jacobian variety of generalized Fermat curves of type $(2,n)$}
In this section we make explicit computations for the case $p=2$.

\subsection{Generalized Fermat curves of type $(2,4)$}
In this case, 
\begin{equation}
S=\left\{ \begin{array}{ccc}
x_{1}^{2}+x_{2}^{2}+x_{3}^{2}&=&0\\
\lambda_{1}x_{1}^{2}+x_{2}^{2}+x_{4}^{2}&=&0\\
\lambda_{2}x_{1}^{2}+x_{2}^{2}+x_{5}^{2}&=&0
\end{array}
\right\} \subset {\mathbb P}^{4},
\end{equation}
where $\lambda_{1}, \lambda_{2} \in {\mathbb C}-\{0,1\}$ and $\lambda_{1} \neq \lambda_{2}$, 
has genus $g=5$ and the generalized Fermat group $H_{0} \cong {\mathbb Z}_{2}^{4}$ is generated by the transformations 
$$a_{1}([x_{1}:x_{2}:x_{3}:x_{4}:x_{5}])=[-x_{1}:x_{2}:x_{3}:x_{4}:x_{5}],$$ 
$$a_{2}([x_{1}:x_{2}:x_{3}:x_{4}:x_{5}])=[x_{1}:-x_{2}:x_{3}:x_{4}:x_{5}],$$ 
$$a_{3}([x_{1}:x_{2}:x_{3}:x_{4}:x_{5}])=[x_{1}:x_{2}:-x_{3}:x_{4}:x_{5}],$$ 
$$a_{4}([x_{1}:x_{2}:x_{3}:x_{4}:x_{5}])=[x_{1}:x_{2}:x_{3}:-x_{4}:x_{5}].$$ 

The transformation $a_{5}=a_{1}a_{2}a_{3}a_{4}$ is 
$$a_{5}([x_{1}:x_{2}:x_{3}:x_{4}:x_{5}])=[x_{1}:x_{2}:x_{3}:x_{4}:-x_{5}].$$ 

The list of the different subgroups $H$ of $H_{0}$, isomorphic to ${\mathbb Z}_{2}^{3}$ with $S/H$ of genus at least one is the following:
$$H_{1}=\langle a_{1}, a_{2}a_{3}, a_{2}a_{4} \rangle, \; 
H_{2}=\langle a_{2}, a_{1}a_{3}, a_{1}a_{4} \rangle$$
$$H_{3}=\langle a_{3}, a_{1}a_{2}, a_{2}a_{4} \rangle, \; 
H_{4}=\langle a_{4}, a_{2}a_{3}, a_{1}a_{2} \rangle, \;
H_{5}=\langle a_{5}, a_{2}a_{3}, a_{2}a_{4} \rangle.$$

The quotient orbifold $S/H_{j}$ has signature $(1;2,2)$ and the underlying Riemann surface is described by the elliptic curve $C_j$ as below.

$$\begin{array}{llll}
C_{1}: & y^{2}=x(x-1)\left(x-\dfrac{\lambda_{2}-1}{\lambda_{1}-1} \right);&
C_{2}: & y^{2}=x(x-1)\left(x-\dfrac{\lambda_{2}}{\lambda_{1}} \right);\\ \\
C_{3}: & y^{2}=x(x-1)\left(x-\dfrac{\lambda_{2}(1-\lambda_{1})}{\lambda_{2}-\lambda_{1}} \right);&
C_{4}: & y^{2}=x(x-1)(x-\lambda_{2});\\ \\
C_{5}: & y^{2}=x(x-1)(x-\lambda_{1}).
\end{array}
$$

\s

In this case, if $i \neq j$, then $H_{i}H_{j}=H_{0}$; so $g_{H_{i}H_{j}}=0$. We may apply Corollary \ref{coroKR} to obtain the following.

\s
\noindent
\begin{theorem}\label{g=5}
Let $S$, $C_{1}$,..., $C_{5}$ be as above. Then
$$JS \cong_{isog.} C_{1} \times C_{2} \times C_{3} \times C_{4} \times C_{5}.$$
\end{theorem}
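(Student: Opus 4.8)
The plan is to apply Corollary \ref{coroKR} directly with the five subgroups $H_1,\ldots,H_5 \cong {\mathbb Z}_2^3$ listed above. To do so I must verify its three hypotheses. First, since all the $H_j$ are subgroups of the abelian group $H_0 \cong {\mathbb Z}_2^4$, condition (1) on commutativity is automatic: $H_iH_j = H_jH_i$ for all $i,j$. Second, I would check condition (2), namely that $g_{H_iH_j} = 0$ for $i < j$. The key observation, already noted in the excerpt, is that each $H_j$ has index $2$ in $H_0$ (it is isomorphic to ${\mathbb Z}_2^3$ inside ${\mathbb Z}_2^4$), so any two distinct such subgroups generate all of $H_0$; that is, $H_iH_j = H_0$ whenever $i \neq j$. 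Since $S/H_0$ is the quotient orbifold of genus zero (the Riemann sphere with cone points $\infty,0,1,\lambda_1,\lambda_2$), we get $g_{H_iH_j} = g_{H_0} = 0$, as required.

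The remaining hypothesis (3), that $g = \sum_{j=1}^5 g_{H_j}$, is where the counting enters. I would note that each quotient orbifold $S/H_j$ has signature $(1;2,2)$, hence genus $g_{H_j} = 1$; this is the content of the five elliptic-curve models $C_1,\ldots,C_5$ exhibited above, each of which is a genus-one curve. Summing gives $\sum_{j=1}^5 g_{H_j} = 5 = g$, matching the genus of $S$. This is precisely the instance $p=2$, $n=4$ of the general counting identity in Lemma \ref{conteo}, which guarantees that the genera of the relevant cyclic $p$-gonal quotients sum to the genus of the generalized Fermat curve; here only the value $r = 4$ contributes (since $r_2 = 4$ and $n+1 = 5$), and $\binom{5}{4}\frac{(4-2)(2-1)}{2}\psi_2(4) = 5 \cdot 1 \cdot 1 = 5$ counts exactly the five subgroups.

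With all three hypotheses of Corollary \ref{coroKR} verified, the conclusion $JS \cong_{isog.} \prod_{j=1}^5 JS_{H_j}$ follows immediately. Finally, since each $S_{H_j}$ is a genus-one Riemann surface, its Jacobian $JS_{H_j}$ is isomorphic to the elliptic curve $C_j$ itself, yielding $JS \cong_{isog.} C_1 \times C_2 \times C_3 \times C_4 \times C_5$.

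I expect the only nontrivial points to be bookkeeping rather than conceptual. The main verification is identifying the five subgroups of $H_0 \cong {\mathbb Z}_2^4$ that are isomorphic to ${\mathbb Z}_2^3$ and give quotients of positive genus, together with confirming that each such quotient has signature $(1;2,2)$ and producing its explicit elliptic-curve equation $C_j$. These equations are obtained by specializing the cyclic $p$-gonal description from Theorem \ref{(p,n)}: for each $H_j$ one reads off which four of the five branch points $\{\infty,0,1,\lambda_1,\lambda_2\}$ survive and with which exponents $\alpha_i \in \{1\}$, giving a quotient $y^2 = \prod(x-\mu_i)$ over four points, which is an elliptic curve after the standard normalization. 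The genuinely computational step is rewriting these four-point double covers in the normalized Legendre-type form displayed for $C_1,\ldots,C_5$, but this is a routine Möbius normalization and poses no essential obstacle.
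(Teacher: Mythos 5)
Your proposal is correct and follows essentially the same route as the paper: the authors likewise observe that $H_iH_j=H_0$ for $i\neq j$ (so $g_{H_iH_j}=0$), note that each $S/H_j$ has signature $(1;2,2)$ so the genera sum to $5=g$, and then invoke Corollary \ref{coroKR}. Your additional remark tying the count of five subgroups to Lemma \ref{conteo} via $\binom{5}{4}\psi_2(4)=5$ is a correct cross-check consistent with the paper's general framework.
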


\s
\noindent
\begin{remark}
\begin{enumerate}
\item Theorem \ref{g=5}, for $\lambda_{1}, \lambda_{2} \in \overline{\mathbb Q}$,  was obtained by Yamauchi in \cite{Yamauchi1}. Our results states that all generalized Fermat curves of type $(2,4)$, a two-dimensional family of Riemann surfaces of genus five, have a complete decomposable Jacobian variety.

\item The elliptic curves $C_{1}$, $C_{2}$ and $C_{3}$, respectively, can also be described by the following ones
$$\widehat{C}_{1}: y^{2}=(x-1)(x-\lambda_{1})(x-\lambda_{2}),$$
$$\widehat{C}_{2}: y^{2}=x(x-\lambda_{1})(x-\lambda_{2}),$$
$$\widehat{C}_{3}: y^{2}=x(x-1)(x-\lambda_{1})(x-\lambda_{2}).$$

If we consider the fiber product of $\widehat{C}_{1}$, $\widehat{C}_{2}$, $\widehat{C}_{3}$, $C_{4}$ and $C_{5}$, in each case using the projection $(x,y) \mapsto x$, we obtain a reducible curve. This curve has two irreducible components, both of them isomorphic to $S$.
\end{enumerate}

\end{remark}

\s
\subsection{An example}
Let $j$ be the Klein-elliptic function
$$j(\lambda)=\frac{(1-\lambda+\lambda^{2})^{3}}{\lambda^{2}(1-\lambda)^{2}}.$$
If we choose $\lambda_{2}=1/\lambda_{1}$, then we see that
$$j\left( \frac{\lambda_{2}(1-\lambda_{1})}{\lambda_{2}-\lambda_{1}} \right) =j(\lambda_{1})$$
is equivalent to have
$$(1+\lambda_{1}^{2})(\lambda_{1}^{2}-\lambda_{1}-1)(\lambda_{1}^{2}+\lambda_{1}-1)=0.$$

$$j\left( \frac{\lambda_{2}}{\lambda_{1}} \right) =j(\lambda_{1})$$
is equivalent to have
$$(\lambda_{1}^{2}-\lambda_{1}-1)(\lambda_{1}^{2}+\lambda_{1}-1)(\lambda_{1}^{2}+\lambda_{1}+1)(\lambda_{1}^{3}-\lambda_{1}+1)(\lambda_{1}^{3}-\lambda_{1}^{2}+1)=0.$$

$$j\left( \frac{\lambda_{2}-1}{\lambda_{1}-1} \right) =j(\lambda_{1})$$
is equivalent to have
$$(1+\lambda_{1}^{2})(\lambda_{1}^{2}-\lambda_{1}-1)(\lambda_{1}^{2}+\lambda_{1}-1)=0.$$

So, if we take $\lambda_{1}=(1-\sqrt{5})/2$ and $\lambda_{2}=-(1+\sqrt{5})/2$, we obtain that 
$C_{1}$, $C_{2}$, $C_{3}$, $C_{4}$ and $C_{5}$ are isomorphic elliptic curves and, in particular, 
$JS \cong_{isog.} C_{1}^{5}$.

\s
%%%%%%%%%%%%%%%%%%%%
\subsection{Generalized Fermat curves of type $(2,5)$}
In this case
\begin{equation}
S=\left\{ \begin{array}{ccc}
x_{1}^{2}+x_{2}^{2}+x_{3}^{2}&=&0\\
\lambda_{1}x_{1}^{2}+x_{2}^{2}+x_{4}^{2}&=&0\\
\lambda_{2}x_{1}^{2}+x_{2}^{2}+x_{5}^{2}&=&0\\
\lambda_{3}x_{1}^{2}+x_{2}^{2}+x_{6}^{2}&=&0\\
\end{array}
\right\} \subset {\mathbb P}^{5},
\end{equation}
where $\lambda_{1}, \lambda_{2}, \lambda_{3} \in {\mathbb C}-\{0,1\}$ and $\lambda_{i} \neq \lambda_{j}$, for $i \neq j$, has genus $g=17$ and the generalized Fermat group $H_{0} \cong {\mathbb Z}_{2}^{5}$ is generated by the transformations 
$$a_{1}([x_{1}:x_{2}:x_{3}:x_{4}:x_{5}:x_{6}])=[-x_{1}:x_{2}:x_{3}:x_{4}:x_{5}:x_{6}],$$ 
$$a_{2}([x_{1}:x_{2}:x_{3}:x_{4}:x_{5}:x_{6}])=[x_{1}:-x_{2}:x_{3}:x_{4}:x_{5}:x_{6}],$$ 
$$a_{3}([x_{1}:x_{2}:x_{3}:x_{4}:x_{5}:x_{6}])=[x_{1}:x_{2}:-x_{3}:x_{4}:x_{5}:x_{6}],$$ 
$$a_{4}([x_{1}:x_{2}:x_{3}:x_{4}:x_{5}:x_{6}])=[x_{1}:x_{2}:x_{3}:-x_{4}:x_{5}:x_{6}],$$ 
$$a_{5}([x_{1}:x_{2}:x_{3}:x_{4}:x_{5}:x_{6}])=[x_{1}:x_{2}:x_{3}:x_{4}:-x_{5}:x_{6}].$$ 

The transformation $a_{6}=a_{1}a_{2}a_{3}a_{4}a_{5}$ is 
$$a_{6}([x_{1}:x_{2}:x_{3}:x_{4}:x_{5}:x_{6}])=[x_{1}:x_{2}:x_{3}:x_{4}:x_{5}:-x_{6}].$$ 

The list of the different subgroups $H$ of $H_{0}$, isomorphic to ${\mathbb Z}_{2}^{4}$ with $S/H$ of genus at least one is the following:
$$H_{1}=\langle a_{1}, a_{2}, a_{3}a_{4}, a_{3}a_{5} \rangle, \; 
H_{2}=\langle a_{1}, a_{3}, a_{2}a_{4}, a_{2}a_{5}  \rangle, \;
H_{3}=\langle a_{1}, a_{4}, a_{2}a_{3}, a_{3}a_{5}  \rangle,$$
$$H_{4}=\langle a_{1}, a_{5}, a_{3}a_{4}, a_{2}a_{3}  \rangle, \;
H_{5}=\langle a_{1}, a_{6}, a_{3}a_{4}, a_{3}a_{5}  \rangle, \;
H_{6}=\langle a_{2}, a_{3}, a_{1}a_{4}, a_{1}a_{5} \rangle,$$
$$H_{7}=\langle a_{2}, a_{4}, a_{1}a_{3}, a_{1}a_{5}  \rangle, \; 
H_{8}=\langle a_{2}, a_{5}, a_{1}a_{4}, a_{1}a_{3}  \rangle, \;
H_{9}=\langle a_{2}, a_{6}, a_{1}a_{4}, a_{1}a_{5}  \rangle,$$
$$H_{10}=\langle a_{3}, a_{4}, a_{1}a_{2}, a_{1}a_{5}  \rangle, \;
H_{11}=\langle a_{3}, a_{5}, a_{1}a_{2}, a_{1}a_{4}  \rangle, \;
H_{12}=\langle a_{3}, a_{6}, a_{1}a_{2}, a_{1}a_{5}  \rangle,$$
$$H_{13}=\langle a_{4}, a_{5}, a_{1}a_{2}, a_{1}a_{3}  \rangle, \;
H_{14}=\langle a_{4}, a_{6}, a_{1}a_{2}, a_{1}a_{3}  \rangle, \;
H_{15}=\langle a_{5}, a_{6}, a_{1}a_{2}, a_{1}a_{3}  \rangle,$$
$$H_{16}=\langle a_{1}a_{2}, a_{1}a_{3}, a_{1}a_{4}, a_{1}a_{5}\rangle.$$

The quotient orbifold $S/H_{j}$, for $j=1,...,15$ has signature $(1;2,2,2,2)$ and its underlying genus one Riemann surface is described by the elliptic curve $C_j$ as below.
$$
\begin{array}{llll}
C_{1}: & y^{2}=x(x-1)\left(x - \dfrac{(\lambda_{3}-\lambda_{2})(1-\lambda_{1})}{(\lambda_{3}-\lambda_{1})(1-\lambda_{2})}\right); &
C_{2}: & y^{2}=x(x-1)\left(x - \dfrac{\lambda_{3}(\lambda_{2}-1)}{\lambda_{2}(\lambda_{3}-\lambda_{1})}\right);\\
C_{3}: & y^{2}=x(x-1)\left(x - \dfrac{\lambda_{3}(1-\lambda_{2})}{\lambda_{3}-\lambda_{2}}\right);&
C_{4}: & y^{2}=x(x-1)\left(x - \dfrac{\lambda_{3}(1-\lambda_{1})}{\lambda_{3}-\lambda_{1}}\right);\\ \\
C_{5}: & y^{2}=x(x-1)\left(x - \dfrac{\lambda_{2}(1-\lambda_{1})}{\lambda_{2}-\lambda_{1}}\right);&
C_{6}: & y^{2}=x(x-1)\left(x - \dfrac{\lambda_{3}-\lambda_{1}}{\lambda_{2}-\lambda_{1}}\right); \\ \\
C_{7}: & y^{2}=x(x-1)\left(x - \dfrac{\lambda_{3}-\lambda_{2}}{1-\lambda_{2}}\right); &
C_{8}: & y^{2}=x(x-1)\left(x - \dfrac{\lambda_{3}-\lambda_{1}}{1-\lambda_{1}}\right);\\ \\
C_{9}: & y^{2}=x(x-1)\left(x - \dfrac{\lambda_{2}-\lambda_{1}}{1-\lambda_{1}}\right); &
C_{10}: & y^{2}=x(x-1)\left(x - \dfrac{\lambda_{3}}{\lambda_{2}}\right);\\ \\
C_{11}: & y^{2}=x(x-1)\left(x - \dfrac{\lambda_{3}}{\lambda_{1}}\right);&
C_{12}: & y^{2}=x(x-1)\left(x - \dfrac{\lambda_{2}}{\lambda_{1}}\right);\\ \\
C_{13}: & y^{2}=x(x-1)\left(x - \lambda_{3}\right); &
C_{14}: & y^{2}=x(x-1)\left(x - \lambda_{2}\right); \\ \\
C_{15}: & y^{2}=x(x-1)\left(x - \lambda_{1}\right).
\end{array}
$$

\s

The quotient Riemann surface $S/H_{16}$ has genus two an it is given by the curve
$$C_{16}: y^{2}=x(x-1)(x-\lambda_{1})(x-\lambda_{2})(x-\lambda_{3}).$$

In this case, if $i \neq j$, then $H_{i}H_{j}=H_{0}$; so $g_{H_{i}H_{j}}=0$. We may apply Corollary \ref{coroKR} to obtain that

\s
\noindent
\begin{theorem}\label{g=17} 
Let $S$, $C_{1}$,..., $C_{16}$ be as above. Then
$$JS \cong_{isog.} C_{1} \times \cdots C_{15} \times JC_{16}.$$
\end{theorem}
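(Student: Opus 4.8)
The plan is to recognize this statement as the instance $(p,n)=(2,5)$ of Theorem \ref{(p,n)} and to obtain it as a direct application of Corollary \ref{coroKR} to the sixteen subgroups $H_{1},\ldots,H_{16}$ listed above. Because every $H_{j}$ is a subgroup of the abelian group $H_{0}\cong{\mathbb Z}_{2}^{5}$, the subgroups pairwise commute, so hypothesis (1) of Corollary \ref{coroKR} is automatic. Thus the proof reduces to checking hypotheses (2) and (3) and to matching each quotient $S_{H_{j}}$ with the curve $C_{j}$ written above.

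For hypothesis (2), I would argue as follows. Each $H_{j}$ is an index-two subgroup of $H_{0}$, and two \emph{distinct} index-two subgroups of an elementary abelian $2$-group always satisfy $H_{i}H_{j}=H_{0}$. Since $S/H_{0}$ is a genus-zero orbifold (the base sphere with the six cone points $\infty,0,1,\lambda_{1},\lambda_{2},\lambda_{3}$), it follows that $g_{H_{i}H_{j}}=g_{H_{0}}=0$ for all $i\neq j$, which is exactly hypothesis (2). For hypothesis (3) I would invoke the counting behind Lemma \ref{conteo}: a subgroup $H\cong{\mathbb Z}_{2}^{4}$ of $H_{0}$ whose associated ${\mathbb Z}_{2}$-cover is branched over $r$ of the six base points gives a quotient $S_{H}$ of genus $(r-2)/2$, and the number of such subgroups is $\binom{6}{r}\psi_{2}(r)$. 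Using $\psi_{2}(4)=\psi_{2}(6)=1$, one gets $15$ subgroups with $r=4$ (hence $S_{H}$ elliptic, the curves $C_{1},\ldots,C_{15}$) and a single subgroup with $r=6$ (hence $S_{H}$ of genus two, the curve $C_{16}$); no other value of $r$ yields genus at least one. Lemma \ref{conteo} then gives precisely $\sum_{j=1}^{16} g_{H_{j}}=15\cdot 1+2=17=g$, which is hypothesis (3). With (1)--(3) verified, Corollary \ref{coroKR} yields $JS\cong_{isog.}\prod_{j=1}^{16} JS_{H_{j}}=C_{1}\times\cdots\times C_{15}\times JC_{16}$, since $JS_{H_{j}}=C_{j}$ is an elliptic curve for $j\le 15$ and $JS_{H_{16}}=JC_{16}$.

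The only remaining task is to confirm the explicit equations of the $C_{j}$. For each $H_{j}$ the quotient map $S_{H_{j}}\to S/H_{0}$ realizes $S_{H_{j}}$ as the hyperelliptic (here $2$-gonal) curve whose branch set is the chosen subset of $\{\infty,0,1,\lambda_{1},\lambda_{2},\lambda_{3}\}$; applying the M\"obius transformation that normalizes three of these points to $\{0,1,\infty\}$ turns the remaining branch points into the cross-ratio expressions appearing in $C_{1},\ldots,C_{16}$, exactly as in the algebraic description preceding Theorem \ref{teo6}. I expect this M\"obius bookkeeping to be the main labor of the proof; the isogeny decomposition itself is immediate from Corollary \ref{coroKR} once the elementary group-theoretic facts in (2) and the counting in (3) are in place.
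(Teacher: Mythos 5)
Your proposal is correct and follows essentially the same route as the paper: the paper likewise observes that the sixteen listed index-two subgroups of $H_{0}\cong{\mathbb Z}_{2}^{5}$ satisfy $H_{i}H_{j}=H_{0}$ (hence $g_{H_{i}H_{j}}=0$) for $i\neq j$, notes that the genera $15\cdot 1+2$ sum to $g=17$, and applies Corollary \ref{coroKR}. Your added justifications — that distinct index-two subgroups of an elementary abelian $2$-group generate the whole group, and the count $\binom{6}{r}\psi_{2}(r)$ of subgroups per branch-set size via Lemma \ref{conteo} — are correct elaborations of steps the paper leaves implicit.
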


\s
\noindent
\begin{remark}
\begin{enumerate}
\item A particular situation of the above result is when 
$\lambda_{1}=-1$, $\lambda_{2} \in {\mathbb C}-\{0,\pm 1, \pm i \}$ and $\lambda_{3}=-1/\lambda_{2}$. In this case the hyperelliptic curve $C_{16}$ admits an involution different from the hyperelliptic involution, this being
$(x,y) \mapsto (-1/x, iy/x^{3})$. By Corollary \ref{hiper2}) $JC_{16} \cong_{isog.} E_{1} \times E_{2}$, where $E_{1}$ and $E_{2}$ are elliptic curves, so $$JS \cong_{isog.} C_{1} \times \cdots C_{15} \times E_{1} \times E_{2}.$$

\item In the case that $\lambda_{1}, \lambda_{2}, \lambda_{3} \in \overline{\mathbb Q}$, Theorem \ref{g=17} was obtained by Yamauchi in \cite{Yamauchi1}. 
\end{enumerate}
\end{remark}

\s

%%%%%%%%%%%%%%%%%%%%
%%%%%%%%%%%%%%%%%%%%
\subsection{Generalized Fermat curves of type $(2,n)$, where $n \geq 6$}
In this case,  
\begin{equation}
S=\left\{ \begin{array}{ccc}
x_{1}^{2}+x_{2}^{2}+x_{3}^{2}&=&0\\
\lambda_{1}x_{1}^{2}+x_{2}^{2}+x_{4}^{2}&=&0\\
\lambda_{2}x_{1}^{2}+x_{2}^{2}+x_{5}^{2}&=&0\\
\vdots & \vdots& \vdots\\
\lambda_{n-2}x_{1}^{2}+x_{2}^{2}+x_{n+1}^{2}&=&0
\end{array}
\right\} \subset {\mathbb P}^{n},
\end{equation}
where $\lambda_{1},...,\lambda_{n-2} \in {\mathbb C}-\{0,1\}$ and $\lambda_{i} \neq \lambda_{j}$, for $i \neq j$, has genus $g=1+2^{n-2}(n-3)$ and the generalized Fermat group  $H_{0} \cong {\mathbb Z}_{2}^{n}$ is generated by the transformations 
$$a_{j}([x_{1}:\cdots:x_{n+1}])=[x_{1}:\cdots:x_{j-1}: -x_{j}:x_{j+1}:\cdots:x_{n+1}], \; j=1,...,n.$$ 

The transformation $a_{n+1}=a_{1}a_{2} \cdots a_{n}$ is
$$a_{n+1}([x_{1}:\cdots:x_{n+1}])=[x_{1}:\cdots:x_{n}:-x_{n+1}].$$

Set 
$$b_{1}=\infty, \; b_{2}=0, \; b_{3}=1, \; b_{4}=\lambda_{1}, \ldots, b_{n+1}=\lambda_{n-2}.$$

%%%%%%%%%%%%%%%%%
\subsubsection{Case $n \geq 6$ even} 
For each $j=1,2,\ldots,(n-2)/2$, we consider the subgroup
$$K_{j}=\langle a_{1}, a_{2},\ldots, a_{2j-1}, a_{2j}a_{2j+1}, a_{2j}a_{2j+2},\ldots, a_{2j}a_{n}\rangle \cong {\mathbb Z}_{2}^{n-1}$$

The quotient orbifold $S/K_{j}$ has signature $((n-2j)/2;2,\stackrel{2(2j-1)}{\ldots},2)$ and admits an automorphism $\tau_{j}$ of order two. If $S_{j}$ is the Riemann surface structure of $S/K_{j}$, then $S_{j}/\langle \tau_{j}\rangle$ is a Riemann orbifold of signature $(0;2,\stackrel{n-2j+2}{\ldots},2)$. The cone points are given by the points $b_{2j},b_{2j+1},\ldots,b_{n+1}$, that is, it is the hyperelliptic Riemann surface of genus $(n-2j)/2$ described by the equation
$$S_{j}: y^{2}=(x-b_{2j})(x-b_{2j+1})\cdots(x-b_{n+1}).$$

Fix $j \in \{1,2,\ldots,(n-2)/2\}$.
For each tuple $(i_{1},\ldots,i_{2j-1}) \in \{1,2,\ldots,n+1\}$, $i_{1}<i_{2}<\cdots<i_{2j-1}$ (we denote by $I_{j}$ such a collection of tuples), we may consider the permutation
$$\sigma_{(i_{1},\ldots,i_{2j-1})}=(1,i_{1})(2,i_{2})\cdots(2j-1,i_{2j-1}) \in \Sigma_{n+1}.$$

We have $\binom{n+1}{2j-1}$ permutations as above. For each such permutation we consider the group
$$H_{\sigma_{(i_{1},\ldots,i_{2j-1})}}$$
$$||$$
$$\langle a_{i_{1}}, a_{i_{2}},\ldots, a_{i_{2j-1}}, a_{\sigma_{(i_{1},\ldots,i_{2j-1})}(2j)}a_{\sigma_{(i_{1},\ldots,i_{2j-1})}(2j+1)}, \ldots, a_{\sigma_{(i_{1},\ldots,i_{2j-1})}(2j)}a_{\sigma_{(i_{1},\ldots,i_{2j-1})}(n)}\rangle \cong {\mathbb Z}_{2}^{n-1}$$

As was the case for $K_{j}$, the quotient orbifold $S/H_{\sigma_{(i_{1},\ldots,i_{2j-1})}}$ has signature $((n-2j)/2;2,\stackrel{2(2j-1)}{\ldots},2)$ and admits an automorphism $\tau_{(i_{1},\ldots,i_{2j-1})}$ of order two. If $S_{(i_{1},\ldots,i_{2j-1})}$ is the Riemann surface structure of $S/H_{\sigma_{(i_{1},\ldots,i_{2j-1})}}$, then $S_{(i_{1},\ldots,i_{2j-1})}/\langle \tau_{(i_{1},\ldots,i_{2j-1})}\rangle$ is a Riemann orbifold of signature $(0;2,\stackrel{n-2j+2}{\ldots},2)$. The cone points are given by the points in 
$$\{c_{1},\ldots,c_{n-2j+2}\}=\{b_{1},\ldots,b_{n+1}\}-\{b_{i_{1}},b_{i_{2}},\ldots,b_{i_{2j-1}}\},$$
that is, it is the hyperelliptic Riemann surface of genus $(n-2j-2)/2$ described by the equation
$$S_{(i_{1},\ldots,i_{2j-1})}: y^{2}=(x-c_{1})(x-c_{2})\cdots(x-c_{n-2j+2}).$$

\s

Any two different subgroups as above, say $H_{\sigma_{(i_{1},\ldots,i_{2j-1})}}$ and $H_{\sigma_{(i_{1},\ldots,i_{2r-1})}}$, satisfies that  
$$H_{\sigma_{(i_{1},\ldots,i_{2j-1})}} H_{\sigma_{(i_{1},\ldots,i_{2r-1})}}=H_{0},$$ 
and, by Lemma \ref{conteo},
$$\sum_{j=1}^{(n-2)/2}\binom{n+1}{2j-1}(n-2j)/2=\sum_{l=2}^{n/2}\binom{n+1}{n+1-2l}(l-1)=\sum_{l=2}^{n/2}\binom{n+1}{2l}(l-1)=1+(n-3)2^{n-2}=g.$$

So, we may apply Corollary \ref{coroKR} to obtain the following.

\s
\noindent
\begin{theorem}
$$JS \cong_{isog.} \prod_{j=1}^{(n-2)/2} \prod_{(i_{1},\ldots,i_{2j-1}) \in I_{j}} JS_{(i_{1},\ldots,i_{2j-1})}$$
\end{theorem}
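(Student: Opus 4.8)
The plan is to apply Kani--Rosen's decomposition theorem, in the form of Corollary~\ref{coroKR}, to the collection of subgroups $H_{\sigma_{(i_{1},\ldots,i_{2j-1})}}$ as $j$ ranges over $\{1,\ldots,(n-2)/2\}$ and $(i_{1},\ldots,i_{2j-1})$ ranges over $I_{j}$. Accordingly I would verify, one by one, the three hypotheses of that corollary for this family. The commutativity hypothesis (1) is immediate, since every member of the family is a subgroup of the abelian group $H_{0}\cong{\mathbb Z}_{2}^{n}$, so any two of them commute.

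For hypothesis (2), I would use the fact recorded just before the statement, namely that any two distinct subgroups in the family satisfy $H_{\sigma_{(i_{1},\ldots,i_{2j-1})}}H_{\sigma_{(i_{1},\ldots,i_{2r-1})}}=H_{0}$. Because $(S,H_{0})$ is a generalized Fermat pair of type $(2,n)$, the quotient orbifold $S/H_{0}$ has genus zero; hence $g_{H_{\sigma}H_{\sigma'}}=g_{H_{0}}=0$ for every pair of distinct subgroups $H_{\sigma}\neq H_{\sigma'}$ in the family, which is exactly hypothesis (2).

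Hypothesis (3) is the counting step. For each fixed $j$ there are precisely $\binom{n+1}{2j-1}$ subgroups $H_{\sigma_{(i_{1},\ldots,i_{2j-1})}}$, one for each tuple in $I_{j}$, and the signature computation already carried out shows that the associated surface $S_{(i_{1},\ldots,i_{2j-1})}$ has genus $(n-2j)/2$. Summing the genera over the whole family therefore gives $\sum_{j=1}^{(n-2)/2}\binom{n+1}{2j-1}(n-2j)/2$, and Lemma~\ref{conteo}, applied with $q=2$, identifies this sum with $1+2^{n-2}(n-3)=g$. With the three hypotheses in hand, Corollary~\ref{coroKR} delivers the asserted isogeny $JS\cong_{isog.}\prod_{j}\prod_{(i_{1},\ldots,i_{2j-1})\in I_{j}}JS_{(i_{1},\ldots,i_{2j-1})}$.

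The genuine content of the argument is thus all packaged in the preparatory material rather than in the application of the corollary: the real obstacle is establishing that each quotient $S/H_{\sigma}$ carries the stated signature (so that its genus is $(n-2j)/2$) and that distinct members of the family multiply to the full group $H_{0}$ across different values of $j$, not merely within a single $j$. Once these structural facts and the combinatorial identity of Lemma~\ref{conteo} are secured, the theorem follows by the routine verification above, with no further estimation needed.
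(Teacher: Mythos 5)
Your proposal is correct and coincides with the paper's own argument: the paper likewise verifies that the subgroups $H_{\sigma_{(i_{1},\ldots,i_{2j-1})}}$ pairwise multiply to $H_{0}$ (so the pairwise quotients have genus zero), computes the genus $(n-2j)/2$ of each $S_{(i_{1},\ldots,i_{2j-1})}$ from the signature of the quotient orbifold, matches the total against $g=1+(n-3)2^{n-2}$ via Lemma~\ref{conteo} with $q=2$, and then invokes Corollary~\ref{coroKR}. No substantive difference.
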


\s

%%%%%%%%%%%%%%%%%
\subsubsection{Case $n \geq 7$ odd}
For each $j=0,1,\ldots,(n-3)/2$, we consider the subgroup
$$K_{j}=\langle a_{1}, a_{2},\ldots, a_{2j}, a_{2j+1}a_{2j+2}, a_{2j+1}a_{2j+3},\ldots, a_{2j+1}a_{n}\rangle \cong {\mathbb Z}_{2}^{n-1}$$

The quotient orbifold $S/K_{j}$ has signature $((n-2j-1)/2;2,\stackrel{4j}{\ldots},2)$ and admits an automorphism $\tau_{j}$ of order two. If $S_{j}$ is the Riemann surface structure of $S/K_{j}$, then $S_{j}/\langle \tau_{j}\rangle$ is a Riemann orbifold of signature $(0;2,\stackrel{n+1-2j}{\ldots},2)$. The cone points are given by the points $b_{2j+1},\ldots,b_{n+1}$, that is, it is the hyperelliptic Riemann surface of genus $(n-2j-1)/2$ described by the equation
$$S_{j}: y^{2}=(x-b_{2j+1})(x-b_{2j+2})\cdots(x-b_{n+1}).$$

Fix $j \in \{0,1,\ldots,(n-3)/2\}$.
For each tuple $(i_{1},\ldots,i_{2j}) \in \{1,2,\ldots,n+1\}$, $i_{1}<i_{2}<\cdots<i_{2j}$ (we denote by $I_{j}$ such a collection of tuples), we may consider the permutation
$$\sigma_{(i_{1},\ldots,i_{2j})}=(1,i_{1})(2,i_{2})\cdots(2j,i_{2j}) \in \Sigma_{n+1}.$$

We have $\binom{n+1}{2j}$ permutations as above. For each such permutation we consider the group
$$H_{\sigma_{(i_{1},\ldots,i_{2j})}}$$
$$||$$
$$\langle a_{i_{1}}, a_{i_{2}},\ldots, a_{i_{2j}}, a_{\sigma_{(i_{1},\ldots,i_{2j})}(2j+1)}a_{\sigma_{(i_{1},\ldots,i_{2j})}(2j+2)}, \ldots, a_{\sigma_{(i_{1},\ldots,i_{2j})}(2j+1)}a_{\sigma_{(i_{1},\ldots,i_{2j})}(n)}\rangle \cong {\mathbb Z}_{2}^{n-1}$$

As was the case for $K_{j}$, the quotient orbifold $S/H_{\sigma_{(i_{1},\ldots,i_{2j})}}$ has signature $((n-2j-1)/2;2,\stackrel{4j}{\ldots},2)$ and admits an automorphism $\tau_{(i_{1},\ldots,i_{2j})}$ of order two. If $S_{(i_{1},\ldots,i_{2j})}$ is the Riemann surface structure of $S/H_{\sigma_{(i_{1},\ldots,i_{2j})}}$, then $S_{(i_{1},\ldots,i_{2j})}/\langle \tau_{(i_{1},\ldots,i_{2j})}\rangle$ is a Riemann orbifold of signature $(0;2,\stackrel{n+1-2j}{\ldots},2)$. The cone points are given by the points in 
$$\{c_{1},\ldots,c_{n+1-2j}\}=\{b_{1},\ldots,b_{n+1}\}-\{b_{i_{1}},b_{i_{2}},\ldots,b_{i_{2j}}\},$$
that is, it is the hyperelliptic Riemann surface of genus $(n-2j-1)/2$ described by the equation
$$S_{(i_{1},\ldots,i_{2j})}: y^{2}=(x-c_{1})(x-c_{2})\cdots(x-c_{n+1-2j}).$$

\s

Any two different subgroups as above, say $H_{\sigma_{(i_{1},\ldots,i_{2j})}}$ and $H_{\sigma_{(i_{1},\ldots,i_{2r})}}$, satisfies that  $$H_{\sigma_{(i_{1},\ldots,i_{2j})}} H_{\sigma_{(i_{1},\ldots,i_{2r})}}=H_{0},$$
and, by Lemma \ref{conteo},
$$\sum_{j=0}^{(n-3)/2}\binom{n+1}{2j}(n-1-2j)/2=\sum_{l=2}^{(n+1)/2}\binom{n+1}{n+1-2l}(l-1)=\sum_{l=2}^{(n+1)/2}\binom{n+1}{2l}(l-1)=1+(n-3)2^{n-2}=g.$$

So, we may apply Corollary \ref{coroKR} to obtain the following.

\s
\noindent
\begin{theorem}
$$JS \cong_{isog.} \prod_{j=0}^{(n-3)/2} \prod_{(i_{1},\ldots,i_{2j}) \in I_{j}} JS_{(i_{1},\ldots,i_{2j})}$$
\end{theorem}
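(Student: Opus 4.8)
\textit{The plan} is to verify the three hypotheses of Corollary \ref{coroKR} for the family of subgroups $\{H_{\sigma_{(i_1,\ldots,i_{2j})}}\}$ indexed by $j \in \{0,1,\ldots,(n-3)/2\}$ and by the tuples of $I_j$, and then read the decomposition off its conclusion. Condition (1) is immediate, since every member of the family is a subgroup of the abelian group $H_0 \cong \mathbb{Z}_2^n$, so they pairwise commute.

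The heart of the argument is a single algebraic fact about each subgroup: working additively in the $\mathbb{F}_2$-vector space $H_0$, subject to the relation $a_1 + \cdots + a_{n+1} = 0$, the coordinate involutions $a_k$, $k \in \{1,\ldots,n+1\}$, lying in $H_{\sigma_{(i_1,\ldots,i_{2j})}}$ are exactly $a_{i_1},\ldots,a_{i_{2j}}$. Indeed, the remaining generators are products $a_{m_1}a_{m_s}$ supported on the complementary indices, whose $\mathbb{F}_2$-span is the even-weight subspace on those indices and hence contains no single involution; using the relation one then sees that the only further candidate, the complementary index absent from the products, would appear only if $n-2j$ were even, which is excluded here because $n$ is odd and $2j$ is even. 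From this two things follow. First, the set $\{a_{i_1},\ldots,a_{i_{2j}}\}$ is recoverable from the subgroup, so distinct tuples (and distinct values of $j$, which fix the cardinality $2j$) give distinct subgroups. Second, since each $H_{\sigma_{(i_1,\ldots,i_{2j})}}$ is an index-two subgroup and two distinct index-two subgroups of an abelian group generate the whole group, any two members satisfy $H_\sigma H_{\sigma'} = H_0$; as $S/H_0 = S/H$ has genus zero, this gives condition (2).

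For condition (3) I would compute the genus of each quotient. By Theorem \ref{cor2} the only elements of $H_0$ with fixed points are the involutions $a_1,\ldots,a_{n+1}$, each having $2^{n-1}$ fixed points with stabilizer $\langle a_k\rangle$; hence in the degree-$2^{n-1}$ cover $S \to S_{(i_1,\ldots,i_{2j})}$ only the $2j$ involutions identified above ramify, and each splits its $2^{n-1}$ fixed points into orbits of size $2^{n-2}$, yielding $2^{n-1}/2^{n-2}=2$ branch points apiece, so $4j$ cone points of order two in all. The Riemann--Hurwitz formula then gives genus $(n-2j-1)/2$, in agreement with the stated signature; reading off the induced involution from $H_0/H_\sigma \cong \mathbb{Z}_2$, whose double quotient is the sphere branched over the complementary points $\{b_k : k \notin \{i_1,\ldots,i_{2j}\}\}$, identifies $S_{(i_1,\ldots,i_{2j})}$ with the displayed hyperelliptic curve. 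Summing these genera over the family is exactly the identity already established via Lemma \ref{conteo}, whose value is $g$; this is condition (3). Corollary \ref{coroKR} then yields the asserted isogeny.

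I expect the main obstacle to be the algebraic fact of the second paragraph, namely pinning down precisely which coordinate involutions lie in each subgroup (and hence that each contributes two, not one, branch points). This is where the oddness of $n$ is genuinely used, since it is the parity of $n-2j$ that forbids a spurious extra involution, and it requires care because $a_{n+1}$ is not one of the free generators and so must be handled through the relation $a_1 + \cdots + a_{n+1} = 0$. Once this fact is secured, the commutativity and the genus-sum identity are formal, and the remaining bookkeeping runs in parallel with the worked cases $(2,4)$ and $(2,5)$.
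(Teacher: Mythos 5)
Your proof is correct and follows essentially the same route as the paper: you verify the three hypotheses of Corollary \ref{coroKR} for the family $\{H_{\sigma_{(i_{1},\ldots,i_{2j})}}\}$ and close the genus count with the identity from Lemma \ref{conteo}. The details you supply that the paper only asserts — that distinct tuples give distinct index-two subgroups (hence pairwise products equal $H_{0}$), and the Riemann--Hurwitz/ramification computation identifying exactly which involutions $a_{k}$ lie in each subgroup — are accurate, including the role of the parity of $n-2j$.
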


\s
\noindent
\begin{example}[A generalized Fermat curve of type $(2,6)$ whose Jacobian variety is isogenous to a product of elliptic curves]
We have seen that the Jacobian variety of a generalized Fermat curve of type $(2,4)$, which has genus five, is isogenous to the product of five elliptic curves. Similarly, we have constructed a one-dimensional family of generalized Fermat curves of type $(2,5)$, which have genus seventeen, whose Jacobian variety is isogenous to the product of seventeen elliptic curves. If we consider a generalized Fermat curve of type $(2,6)$, which has genus $49$, the above decomposition states that its Jacobian variety is isogenous to the product of $35$ elliptic curves and $7$ Jacobian varieties of genus two curves. These seven curves of genus two are of the form $y^{2}=\prod_{j=1}^{6}(x-a_{j})$, where $a_{1},\ldots,a_{6} \in \{\infty,0,1,\lambda_{1},\lambda_{2},\lambda_{3},\lambda_{4}\}$. If we assume the values $\lambda_{1}$, $\lambda_{2}$, $\lambda_{3}$ and $\lambda_{4}$ so that there is a M\"obius transformation $T$ of order seven preserving the set $\{\infty,0,1,\lambda_{1},\lambda_{2},\lambda_{3},\lambda_{4}\}$, then for every point $a \in \{\infty,0,1,\lambda_{1},\lambda_{2},\lambda_{3},\lambda_{4}\}$ there is a M\"obius transformation $S_{a}$ of order two fixing $a$ and permuting the other six points. This asserts that the corresponding genus two surface admits a conformal involution with two fixed points, so its Jacobian variety is isogenous to the product of two elliptic curves. In particular, for such values the corresponding generalized Fermat curve of type $(2,6)$ is isogenous to the product of $49$ elliptic curves.

\end{example}

\s

%%%%%%%%%%%%%%%%%%%%
%%%%%%%%%%%%%%%%%%%%
\section{Jacobian variety of generalized Fermat curves of type $(3,n)$, where $n \in \{2,3,4\}$}
In this section we describe the decomposition of the Jacobian variety of generalized Fermat curves of type $(3,n)$, with $n \in \{2,3,4\}$, indicated in Theorem \ref{(p,n)}.

\subsection{A simple remark}
Let us start with the following fact (which was also obtained in \cite{Riera-Rodriguez}).

\s
\noindent
\begin{lemma}\label{lemag=2}
If $S$ is a closed Riemann surface of genus two admitting a conformal automorphism $\eta$ of order three with exactly four fixed points, then $JS$ is isogenous to the product of two elliptic curves.
\end{lemma}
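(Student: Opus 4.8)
The plan is to reduce $S$ to an explicit hyperelliptic model on which $\eta$ acts diagonally, to exhibit a second (non-hyperelliptic) involution, and then to invoke Corollary~\ref{hiper2}. First I would use that $S$, being of genus two, is hyperelliptic, with hyperelliptic involution $\iota$ lying in the centre of ${\rm Aut}(S)$; hence $\eta$ commutes with $\iota$ and descends to a M\"obius transformation $\overline{\eta}$ of order three on $\mathbb{P}^{1}=S/\langle\iota\rangle$, permuting the six branch points of the hyperelliptic cover. The key computation is a fixed-point count: $\overline{\eta}$ has exactly two fixed points on $\mathbb{P}^{1}$, and over a fixed point that is not a branch point the fibre consists of two points, both fixed by $\eta$ (an order-three map cannot interchange them), while over a fixed point that is a branch point the unique point in the fibre is fixed. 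Comparing with the hypothesis that $\eta$ has exactly four fixed points forces both fixed points of $\overline{\eta}$ to be non-branch points and the six branch points to split into two $\overline{\eta}$-orbits of length three. Normalising the fixed points of $\overline{\eta}$ to $0$ and $\infty$, this yields the model
\[
S:\ w^{2}=(x^{3}-p)(x^{3}-q),\qquad \eta(x,w)=(\omega\, x,\,w),\quad \omega=e^{2\pi i/3},
\]
with $p\neq q$ and $pq\neq 0$.

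Next I would exhibit an extra involution. Choosing $c$ and $d$ with $c^{3}=d^{2}=pq$, I claim that $\nu(x,w)=(c/x,\,dw/x^{3})$ is a conformal involution of $S$: a direct substitution shows it preserves the defining equation (the condition $c^{3}=d^{2}=pq$ is exactly what makes $d^{2}(x^{3}-p)(x^{3}-q)=(c^{3}-px^{3})(c^{3}-qx^{3})$), and one checks $\nu^{2}={\rm id}$. Since $\nu$ moves the $x$-coordinate it is different from $\iota$, and it commutes with $\iota$, so $\langle\iota,\nu\rangle\cong\mathbb{Z}_{2}^{2}$; in particular $\nu$ is an extra involution on $S$ in the sense of Corollary~\ref{hiper2}.

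Finally I would apply Corollary~\ref{hiper2} to the hyperelliptic surface $S$ of genus two equipped with the extra involution $\nu$. It gives $JS\cong_{isog.} JC_{1}\times JC_{2}$, where $C_{1}$ and $C_{2}$ are the underlying Riemann surfaces of $S/\langle\nu\rangle$ and $S/\langle\iota\nu\rangle$. Each of these is of genus one: an involution of a genus-two surface other than the hyperelliptic one has exactly two fixed points (Riemann-Hurwitz, forcing $g'=(6-f)/4\in\mathbb{Z}_{\geq0}$, leaves no other possibility), so each quotient has genus $(6-2)/4=1$. Hence $JS$ is isogenous to the product of the two elliptic curves $C_{1}$ and $C_{2}$, as claimed. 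The main obstacle is the first step, namely turning the abstract fixed-point hypothesis into the concrete normal form $w^{2}=(x^{3}-p)(x^{3}-q)$; once this model is in hand, producing $\nu$ and quoting Corollary~\ref{hiper2} is routine.
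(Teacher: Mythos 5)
Your proof is correct and follows essentially the same route as the paper: reduce to the normal form $w^{2}=(x^{3}-p)(x^{3}-q)$ with $\eta(x,w)=(\omega x,w)$, exhibit the extra involution $(x,w)\mapsto(c/x,\,dw/x^{3})$, and conclude via Corollary~\ref{hiper2}. Your fixed-point analysis on $S/\langle\iota\rangle$ actually justifies the normal form more explicitly than the paper does (the paper passes instead through the signature $(0;3,3,3,3)$ of $S/\langle\eta\rangle$), but the argument is the same in substance.
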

\begin{proof}
By the Riemann-Hurwitz formula, $S/\langle \eta \rangle$ has signature $(0;3,3,3,3)$.  We may assume that the cone points of $S/\langle \eta \rangle$ are $\infty$, $0$, $1$ and $\lambda \in {\mathbb C}-\{0,1\}$. It follows that $S$ is defined by the hyperelliptic curve 
$$y^{2}=(x^{3}-1)\left(x^{3}-\alpha^{3} \right),$$
where
$$\alpha^{3}= \left(\frac{\sqrt{\lambda}+1}{\sqrt{\lambda}-1}\right)^{2}.$$

The automorphism $\eta$ is defined by
$$\eta(x,y)=(\omega_{3}x,y), \quad \omega_{3}=e^{2\pi i/3}.$$

The above hyperelliptic curve admits the involution 
$$(x,y) \mapsto \left(\frac{\alpha}{x}, \frac{\alpha^{3/2} \; y}{x^{3}}\right),$$
with exactly two fixed points; so it follows (see Section \ref{genustwo}) that $JS \cong_{isog.} E_{1} \times E_{2}$, where $E_{1}$ and $E_{2}$ are elliptic curves.

\end{proof}

\s
%%%%%%%%%%%%%%%%
\subsection{Generalized Fermat curves of type $(3,2)$}
There is exactly one generalized Fermat curve of type $(3,2)$, this being given by:
$$x_{1}^{3}+x_{2}^{3}+x_{3}^{3}=0$$
which has genus $g=1$ (it is also described by the curve $y^{2}=x^{3}-1$).

\s
%%%%%%%%%%%%%%%%%%
\subsection{Generalized Fermat curves of type $(3,3)$}
A generalized Fermat curve of type $(3,3)$ is given by 
\begin{equation}
S=\left\{ \begin{array}{ccc}
x_{1}^{3}+x_{2}^{3}+x_{3}^{3}&=&0\\
\lambda x_{1}^{3}+x_{2}^{3}+x_{4}^{3}&=&0
\end{array}
\right\} \subset {\mathbb P}^{3},
\end{equation}
where $\lambda \in {\mathbb C}-\{0,1\}$. 

The Riemann surface $S$ has genus $g=10$ and the generalized Fermat group $H_{0} \cong {\mathbb Z}_{3}^{3}$ is generated by the transformations 
$$a_{1}([x_{1}:x_{2}:x_{3}:x_{4}])=[\omega_{3}x_{1}:x_{2}:x_{3}:x_{4}],$$ 
$$a_{2}([x_{1}:x_{2}:x_{3}:x_{4}])=[x_{1}:\omega_{3}x_{2}:x_{3}:x_{4}],$$ 
$$a_{3}([x_{1}:x_{2}:x_{3}:x_{4}])=[x_{1}:x_{2}:\omega_{3}x_{3}:x_{4}],$$ 
where $\omega_{3}=e^{2 \pi i/3}$.

The transformation $a_{4}^{-1}=a_{1}a_{2}a_{3}$ is
$$a_{4}([x_{1}:x_{2}:x_{3}:x_{4}])=[x_{1}:x_{2}:x_{3}:\omega_{3}x_{4}].$$ 

\s

The list of the different subgroups $H$ of $H_{0}$, isomorphic to ${\mathbb Z}_{3}^{2}$ with $S/H$ of genus at least one is the following:
$$H_{1}=\langle a_{1}, a_{2}a_{3}^{-1} \rangle, \; 
H_{2}=\langle a_{2}, a_{1}a_{3}^{-1} \rangle, \;
H_{3}=\langle a_{3}, a_{2}a_{1}^{-1} \rangle, \;
H_{4}=\langle a_{4}, a_{2}a_{3}^{-1} \rangle,$$
$$H_{5}=\langle a_{1}a_{2}, a_{1}a_{3} \rangle, \;
H_{6}=\langle a_{1}a_{2}, a_{2}a_{3} \rangle, \;
H_{7}=\langle a_{2}a_{3}, a_{1}a_{3} \rangle.$$

If $j=1,2,3,4$, then the quotient orbifold $S/H_{j}$ has signature $(1;3,3,3)$ and the underlying Riemann surface is isomorphic to the elliptic curve
$$y^{2}=x^{3}-1.$$

Note that $S/\langle a_{j} \rangle$ has signature $(1;3,\stackrel{9}{\ldots},3)$ and its 
underlying Riemann surface is isomorphic to the generalized Fermat curve of type $(3,2)$ (which has genus one)
$$C: y_{1}^{3}+y_{2}^{3}+y_{3}^{3}=0.$$

The quotient $S_{j}=S/H_{j}$, for $j=5,6,7$, is a closed Riemann surface of genus two admitting an automorphism $\eta_{j}$ of order three so that  $S_{j}/\langle \eta_{j} \rangle$ has signature $(0;3,3,3,3)$ and whose cone points are $\infty$, $0$, $1$ and $\lambda$. It follows, from Lemma \ref{lemag=2}, that $S_{j}$ is isomorphic to 
$$E: y^{2}=(x^{3}-1)\left(x^{3}-\alpha^{3} \right),$$
where
$$\alpha^{3}= \left(\frac{\sqrt{\lambda}+1}{\sqrt{\lambda}-1}\right)^{2},$$
and that $JE \cong_{isog.} E_{1} \times E_{2}$, where $E_{1}$ and $E_{2}$ are elliptic curves.

We must note that, for $i \neq j$, the group $H_{i}H_{j}$ always contains two elements $a_{u}$ and $a_{v}$, where $u \neq v$. Now, since $S/\langle a_{u},a_{v}\rangle$ has signature $(0;3,\stackrel{6}{\ldots},3)$, for $u \neq v$, one has that $g_{H_{i}H_{j}}=0$ for $i \neq j$. 
We may apply Corollary \ref{coroKR} to obtain the following.

\s
\noindent
\begin{theorem}\label{g=10}
Let $S$ be a generalized Fermat curve of type $(3,3)$. Then $JS$ is isogenous to the product of elliptic curves. \end{theorem}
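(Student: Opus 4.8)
The plan is to invoke the Kani--Rosen consequence, Corollary \ref{coroKR}, applied to the seven subgroups $H_{1},\ldots,H_{7}$ already exhibited, and then to identify each resulting factor $JS_{H_{j}}$ as a product of elliptic curves. Since every $H_{j}$ is a subgroup of the abelian group $H_{0}\cong{\mathbb Z}_{3}^{3}$, condition (1) of Corollary \ref{coroKR} (pairwise commutativity) is automatic.

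Next I would verify hypothesis (2), namely $g_{H_{i}H_{j}}=0$ for $1\leq i<j\leq 7$. The key observation, already recorded above, is that for any two distinct subgroups in the list the product $H_{i}H_{j}$ contains two of the generators $a_{u},a_{v}$ with $u\neq v$; since $S/\langle a_{u},a_{v}\rangle$ has signature $(0;3,\stackrel{6}{\ldots},3)$, its genus is zero, and hence the genus of the (larger) quotient $S/H_{i}H_{j}$ is also zero. Then I would check hypothesis (3), $g=\sum_{j=1}^{7}g_{H_{j}}$: here $g=10$, while $g_{H_{j}}=1$ for $j=1,2,3,4$ and $g_{H_{j}}=2$ for $j=5,6,7$, so that $4\cdot 1+3\cdot 2=10$. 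This last equality is precisely the instance of the counting Lemma \ref{conteo} for $(p,n)=(3,3)$, which is what guarantees that the chosen subgroups account for the full genus.

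With all three hypotheses in hand, Corollary \ref{coroKR} yields
$$JS\cong_{isog.}\prod_{j=1}^{7}JS_{H_{j}}.$$
It then remains to identify the factors. For $j=1,2,3,4$ the quotient $S_{H_{j}}$ is the elliptic curve $y^{2}=x^{3}-1$, so $JS_{H_{j}}$ is already an elliptic curve. For $j=5,6,7$ the quotient $S_{H_{j}}$ is a genus-two surface carrying an order-three automorphism $\eta_{j}$ with exactly four fixed points (the quotient having signature $(0;3,3,3,3)$ with cone points $\infty,0,1,\lambda$), so Lemma \ref{lemag=2} gives $JS_{H_{j}}\cong_{isog.}E_{1,j}\times E_{2,j}$ with each $E_{i,j}$ an elliptic curve. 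Substituting these identifications into the displayed isogeny shows that $JS$ is isogenous to a product of $4+2\cdot 3=10$ elliptic curves, which is the assertion.

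The genuinely nontrivial inputs are the structural description of the seven quotients and Lemma \ref{lemag=2}, both established beforehand; thus the proof itself is essentially bookkeeping. Were the preliminary computations not already in place, the main obstacle would be the explicit verification that each $S_{H_{j}}$ for $j=5,6,7$ carries the required order-three automorphism with four fixed points so that Lemma \ref{lemag=2} applies, together with confirming the exact genus balance $\sum_{j}g_{H_{j}}=g$ needed for Corollary \ref{coroKR}.
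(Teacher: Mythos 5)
Your proposal is correct and follows essentially the same route as the paper: applying Corollary \ref{coroKR} to the seven listed subgroups $H_{1},\ldots,H_{7}$, verifying $g_{H_{i}H_{j}}=0$ via the observation that $H_{i}H_{j}$ contains two distinct generators $a_{u},a_{v}$, checking the genus balance $4\cdot 1+3\cdot 2=10$, and then reducing the genus-two factors to elliptic curves via Lemma \ref{lemag=2}. The only cosmetic difference is that the paper does not spell out the genus-sum verification as an instance of Lemma \ref{conteo}, but the substance is identical.
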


\s
%%%%%%%%%%%%%%%%%%%%%
\subsection{Generalized Fermat curves of type $(3,4)$}
In this case,  
\begin{equation}
S=\left\{ \begin{array}{ccc}
x_{1}^{3}+x_{2}^{3}+x_{3}^{3}&=&0\\
\lambda_{1} x_{1}^{3}+x_{2}^{3}+x_{4}^{3}&=&0\\
\lambda_{2} x_{1}^{3}+x_{2}^{3}+x_{5}^{3}&=&0\\
\end{array}
\right\} \subset {\mathbb P}^{4},
\end{equation}
where $\lambda_{1}, \lambda_{2} \in {\mathbb C}-\{0,1\}$ and $\lambda_{1} \neq \lambda_{2}$,  
has genus $g=55$ and the generalized Fermat group $H_{0} \cong {\mathbb Z}_{3}^{4}$ is generated by the transformations 
$$a_{1}([x_{1}:x_{2}:x_{3}:x_{4}:x_{5}])=[\omega_{3}x_{1}:x_{2}:x_{3}:x_{4}:x_{5}],$$ 
$$a_{2}([x_{1}:x_{2}:x_{3}:x_{4}:x_{5}])=[x_{1}:\omega_{3}x_{2}:x_{3}:x_{4}:x_{5}],$$ 
$$a_{3}([x_{1}:x_{2}:x_{3}:x_{4}:x_{5}])=[x_{1}:x_{2}:\omega_{3}x_{3}:x_{4}:x_{5}],$$ 
$$a_{4}([x_{1}:x_{2}:x_{3}:x_{4}:x_{5}])=[x_{1}:x_{2}:x_{3}:\omega_{3}x_{4}:x_{5}],$$
where $\omega_{3}=e^{2 \pi i/3}$.

The transformation $a_{5}^{-1}=a_{1}a_{2}a_{3}a_{4}$ is 
$$a_{5}([x_{1}:x_{2}:x_{3}:x_{4}:x_{5}])=[x_{1}:x_{2}:x_{3}:x_{4}:\omega_{3}x_{5}].$$

The list of the different subgroups $H$ of $H_{0}$, isomorphic to ${\mathbb Z}_{3}^{3}$ with $S/H$ of genus at least one is the following:

$$H_{1}=\langle a_{1}, a_{2}, a_{3}a_{4}^{-1} \rangle, \; 
H_{2}=\langle a_{1}, a_{3}, a_{2}a_{4}^{-1} \rangle, \; 
H_{3}=\langle a_{1}, a_{4}, a_{2}a_{3}^{-1}\rangle,$$
$$H_{4}=\langle a_{1}, a_{5}, a_{2}a_{3}^{-1}\rangle, \;
H_{5}=\langle a_{2}, a_{3}, a_{1}a_{4}^{-1} \rangle,
H_{6}=\langle a_{2}, a_{4}, a_{1}a_{3}^{-1} \rangle,$$
$$H_{7}=\langle a_{2}, a_{5}, a_{1}a_{3}^{-1}\rangle, \; 
H_{8}=\langle a_{3}, a_{4}, a_{1}a_{2}^{-1}\rangle, \; 
H_{9}=\langle a_{3}, a_{5}, a_{1}a_{2}^{-1}\rangle, \; 
H_{10}=\langle a_{4}, a_{5}, a_{1}a_{2}^{-1}\rangle,$$
$$L_{1}=\langle a_{1}, a_{2}a_{3}, a_{2}a_{4} \rangle, \; L_{2}=\langle a_{1}, a_{2}a_{3}, a_{2}a_{5} \rangle, \; L_{3}=\langle a_{1}, a_{2}a_{4}, a_{2}a_{5} \rangle,$$
$$L_{4}=\langle a_{2}, a_{3}a_{4}, a_{3}a_{5} \rangle,\; L_{5}=\langle a_{2}, a_{3}a_{4}, a_{3}a_{1} \rangle,\; L_{6}=\langle a_{2}, a_{3}a_{5}, a_{3}a_{1} \rangle,$$
$$L_{7}=\langle a_{3}, a_{4}a_{5}, a_{4}a_{1}\rangle,\; L_{8}=\langle a_{3}, a_{4}a_{5}, a_{4}a_{2}\rangle,\; L_{9}=\langle a_{3}, a_{4}a_{1}, a_{4}a_{2}\rangle,$$
$$L_{10}=\langle a_{4}, a_{5}a_{1}, a_{5}a_{2}\rangle,\; L_{11}=\langle a_{4}, a_{5}a_{1}, a_{5}a_{3}\rangle,\; L_{12}=\langle a_{4}, a_{5}a_{2}, a_{5}a_{3}\rangle,$$
$$L_{13}=\langle a_{5}, a_{1}a_{2}, a_{1}a_{3} \rangle,\; L_{14}=\langle a_{5}, a_{1}a_{2}, a_{1}a_{4} \rangle,\; L_{15}=\langle a_{5}, a_{1}a_{3}, a_{1}a_{4} \rangle,$$
$$R_{1}=\langle a_{1}a_{2}, a_{1}a_{3}, a_{1}a_{4} \rangle, \;
R_{2}=\langle a_{1}a_{2}, a_{2}a_{3}, a_{2}a_{4} \rangle,$$
$$R_{3}=\langle a_{3}a_{2}, a_{1}a_{3}, a_{3}a_{4} \rangle, \;
R_{4}=\langle a_{4}a_{2}, a_{4}a_{3}, a_{1}a_{4} \rangle, \;
R_{5}=\langle a_{5}a_{2}, a_{5}a_{3}, a_{5}a_{4} \rangle.$$

\s

\subsubsection{} The quotient orbifold $S/H_{j}$, for each $j=1,\ldots,10$, has signature $(1;3,\stackrel{6}{\ldots},3)$ whose underlying Riemann surface is isomorphic to the elliptic curve
$$y^{2}=x^{3}-1.$$

Since, for every triple $i_{1},i_{2},i_{3} \in \{1,2,3,4,5\}$ of pairwise different numbers one has that $S/\langle a_{i_{1}}, a_{i_{2}}, a_{i_{3}}\rangle$ has signature $(0;3,\stackrel{9}{\ldots},3)$, for $i\neq j$ it holds that $S/H_{i}H_{j}$ has genus zero.

\subsubsection{}
The quotient orbifold $S/L_{j}$ has signature $(2;3,3,3)$ and its underlying Riemann surface $S_{j}$ (of genus two) admits an action of ${\mathbb Z}_{3}$ with quotient orbifold with signature $(0;3,3,3,3)$; so Lemma \ref{lemag=2} states that $JS_{j}$ is isogenous to the product of two elliptic curves. 

It can be checked that $L_{i}L_{j}=H_{0}$ if $i \neq j$, so $S/L_{i}L_{j}$ has genus zero. Also, we may see that $L_{j}H_{i}=H_{0}$, for $j=1,\ldots, 15$ and $i=1,\ldots,10$, so $S/L_{j}H_{i}$ has also genus zero.

\subsubsection{}
The quotient orbihold $S/R_{j}$, in these cases, has signature $(3;-)$. This quotient surface admits a cyclic group of order three, say generated by $\rho_{j}$, as a group of automorphisms with quotient being of signature $(0;3,3,3,3,3)$. In fact, this Riemann surface can be described by the algebraic curve
$$y^{3}=x^{\alpha_{1}}(x-1)^{\alpha_{2}}(x-\lambda_{1})^{\alpha_{3}}(x-\lambda_{2})^{\alpha_{4}},$$
where
$$(\alpha_{1},\alpha_{2},\alpha_{3},\alpha_{4})=\left\{ \begin{array}{cc}
(1,2,2,2), & j=1;\\
(2,1,1,1), & j=2;\\
(2,2,1,2), & j=3;\\
(2,2,2,1), & j=4;\\
(2,2,2,2), & j=5.
\end{array}
\right.
$$
and $\rho_{j}(x,y)=(x,e^{2 \pi i/e}y)$.

For any different two of these groups, say $R_{i}$ and $R_{j}$ one has that $R_{i}R_{j}=H_{0}$, so $S/R_{i}R_{j}$ has genus zero.

Similarly, $S/H_{i}R_{j}$ and $S/L_{i}R_{j}$ have genus zero.

\s
\noindent
\begin{remark}
\begin{enumerate}
\item We explain the difference between the above groups $L_{j}$ above. First, once we have made a choice of one of the points inside $\{\infty,0,1,\lambda_{1},\lambda_{2}\}$, we may consider a closed Riemann surface of genus two admitting an automorphism of order three with quotient being the sphere and whose branch values are the other four fixed points. The choice of a point as above is equivalent to make a choice in the set $\{a_{1},a_{2},a_{3},a_{4},a_{5}\}$. For simplicity, let us assume the chosen point is $\infty$, equivalently, the element $a_{1}$. So the Riemann surface of genus two we look for must be represented by a curve of the form
$$y^{3}=x^{\alpha_{1}}(x-1)^{\alpha_{2}}(x-\lambda_{1})^{\alpha_{3}}(x-\lambda_{2})^{\alpha_{4}},$$
where $\alpha_{1},\alpha_{2},\alpha_{3},\alpha_{4} \in \{1,2\}$ and $\alpha_{1}+\alpha_{2}+\alpha_{3}+\alpha_{4}$ is congruent to $0$ module $3$. For this two happen, two of these values must be equal to $1$ and the other two equal to $2$. This provides $6$ possible ordered tuples $(\alpha_{1},\alpha_{2},\alpha_{3},\alpha_{4})$. But, the tuples $(1,1,2,2)$ and $(2,2,1,1)$ produce isomorphic curves; similarly the pairs of tuples $(1,2,1,2)$ and $(2,1,2,1)$, the pairs of tuples $(1,2,2,1)$ and $(2,1,1,2)$. So, we only have $3$ cases to consider, these provided by the tuples $(1,2,2,1)$, $(1,2,1,2)$ and $(1,1,2,2)$. These three cases correspond, respectively,  to the groups $L_{1}$, $L_{2}$ and $L_{3}$.

\s

\item Let us consider the following subgroups of $H_{0}$, each one isomorphic to ${\mathbb Z}_{3}^{2}$, 
$$U_{1}=\langle a_{1}, a_{2}a_{3} \rangle=\langle a_{1}, a_{4}a_{5} \rangle,\;
U_{2}=\langle a_{1}, a_{2}a_{4} \rangle=\langle a_{1}, a_{3}a_{5} \rangle,\; 
U_{3}=\langle a_{1}, a_{2}a_{5} \rangle=\langle a_{1}, a_{3}a_{4} \rangle,$$
$$U_{4}=\langle a_{2}, a_{1}a_{3}\rangle=\langle a_{2}, a_{4}a_{5}\rangle,\; 
U_{5}=\langle a_{2}, a_{1}a_{4}\rangle=\langle a_{2}, a_{3}a_{5}\rangle,\;
U_{6}=\langle a_{2}, a_{1}a_{5}\rangle=\langle a_{2}, a_{3}a_{4}\rangle, $$
$$U_{7}=\langle a_{3}, a_{1}a_{2} \rangle=\langle a_{3}, a_{4}a_{5} \rangle,\;
U_{8}=\langle a_{3}, a_{1}a_{4} \rangle=\langle a_{3}, a_{2}a_{5} \rangle, \;
U_{9}=\langle a_{3}, a_{1}a_{5} \rangle=\langle a_{3}, a_{2}a_{4} \rangle, $$
$$U_{10}=\langle a_{4}, a_{1}a_{2}\rangle=\langle a_{4}, a_{3}a_{5}\rangle,\;
U_{11}=\langle a_{4}, a_{1}a_{3}\rangle=\langle a_{4}, a_{2}a_{5}\rangle, \;
U_{12}=\langle a_{4}, a_{1}a_{5}\rangle=\langle a_{4}, a_{2}a_{3}\rangle, $$
$$U_{13}=\langle a_{5}, a_{1}a_{2}\rangle=\langle a_{5}, a_{3} a_{4}\rangle, \;
U_{14}=\langle a_{5}, a_{1}a_{3}\rangle=\langle a_{5}, a_{2}a_{4}\rangle,\;
U_{15}=\langle a_{5}, a_{1}a_{4}\rangle=\langle a_{5}, a_{2}a_{3}\rangle.$$

The quotient orbifold $S/U_{j}$, for each $j=1,\ldots,15$, has signature $(4;3,\stackrel{9}{\ldots},3)$. The underlying Riemann surface of genus four, say $S_{j}$, admits a group $K_{j} \cong {\mathbb Z}_{3}^{2}$ of conformal automorphisms with quotient orbifold of signature $(0;3,3,3,3)$. It is well known that $K_{j}$ can be generated by two elements of order three, say $b_{1}$ and $b_{2}$, each acting with fixed points, so that the only non-trivial elements acting with fixed points are $b_{1}$, $b_{1}^{2}$, $b_{2}$ and $b_{2}^{2}$ (for instance, by using Theorem \ref{cor2}). Let $L_{j,1}=\langle b_{1}b_{2} \rangle$ and $L_{j,2}=\langle b_{1}b_{2}^{2}\rangle$. Then, for $i=1,2$, $S_{j}/L_{j,i}$ has signature $(2;-)$ and $L_{j,1}L_{j,2}=K_{j}$. It follows from Corollary \ref{coroKR} that $JS_{j}\cong_{isog.} JS/L_{j,1} \times JS/L_{j,2}$. But $S/L_{j,i}$ is a genus two Riemann surface admitting a conformal automorphism of order three acting with four fixed points, so Lemma \ref{lemag=2} states that $JS/L_{j,i}$ is isogenous to the product of two elliptic curves. As a consequence, $JS_{j}$ is isogenous to the product of four elliptic curves.
If $i_{1} \in \{1,2,3\}$, $i_{2} \in \{4,5,6\}$, $i_{3} \in \{7,8,9\}$, $i_{4} \in \{10,11,12\}$, $i_{5} \in \{13,14,15\}$, then for $u,v \in \{i_{1},i_{2},i_{3},i_{4}\}$, $u \neq v$, one has that $U_{u}U_{v}$ contains at least three of the elements $a_{1}$, $a_{2}$, $a_{3}$, $a_{4}$ and $a_{5}$. So, $S/U_{u}U_{v}$ has genus zero.  
Unfortunately, if $u,v$ belong to the same triple ($\{1,2,3\}$, $\{4,5,6\}$, $\{7,8,9\}$, $\{10,11,12\}$ or $\{13,14,15\}$), then $U_{u}U_{v} \cong {\mathbb Z}_{3}^{3}$ is one of the groups $L_{j}$ above, so the quotient $S/U_{u}U_{v}$ has positive genus.
\end{enumerate}
\end{remark}

\s

As  the total sum for the genera of all orbifolds is
$$\sum_{j=1}^{10}\underbrace{g(S/{H_{j}})}_{1} + \sum_{j=1}^{15}\underbrace{g(S/L_{j})}_{2}+ \sum_{j=1}^{5}\underbrace{g(S/R_{j})}_{3}=55,$$
Corollary \ref{coroKR}  states that

\s
\noindent
\begin{theorem}\label{(3,4)}
If $S$ is a generalized Fermat curve of type $(3,4)$, then 
$$JS \cong_{isog.} \prod_{j=1}^{10} JS_{H_{j}} \prod_{j=1}^{15}JS_{L_{j}} \prod_{j=1}^{5}JS_{R_{j}}.$$

Moreover, each $JS_{H_{j}}$ is an elliptic curve and each $JS_{L_{j}}$ is isogenous to the product of two elliptic curves.

\end{theorem}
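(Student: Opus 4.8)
The plan is to obtain the decomposition as a direct application of the Kani--Rosen Corollary \ref{coroKR} to the thirty subgroups $H_{1},\dots,H_{10},L_{1},\dots,L_{15},R_{1},\dots,R_{5}$ of $H_{0}$. Three hypotheses must be verified: pairwise commutativity of these groups, genus-zero quotients for every pairwise product, and the matching of the total quotient-genus with $g=55$.

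The commutativity hypothesis is immediate, since all thirty groups lie inside the abelian group $H_{0}\cong{\mathbb Z}_{3}^{4}$. For the second hypothesis I would argue uniformly by linear algebra rather than case by case: regarding $H_{0}$ as a four-dimensional vector space over ${\mathbb F}_{3}$, each of the listed groups is a copy of ${\mathbb Z}_{3}^{3}$, hence a hyperplane of codimension one. Any two distinct hyperplanes $K,K'$ of a four-dimensional space satisfy $K+K'=H_{0}$, because $K\neq K'$ forces $\dim(K+K')>3$ while the ambient dimension is $4$. Consequently $K_{i}K_{j}=H_{0}$ for every pair of distinct groups on the list, so $S/K_{i}K_{j}=S/H_{0}$ is the base orbifold of genus zero and $g_{K_{i}K_{j}}=0$. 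This recovers uniformly the separate assertions $H_{i}H_{j}=H_{0}$, $L_{i}L_{j}=H_{0}$, $R_{i}R_{j}=H_{0}$, $L_{j}H_{i}=H_{0}$, $H_{i}R_{j}=H_{0}$ and $L_{i}R_{j}=H_{0}$ recorded above, and reduces the whole hypothesis to the observation that the thirty listed groups are pairwise distinct, which is read off directly from their generators.

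For the third hypothesis I would collect the Riemann--Hurwitz computations already carried out: $g(S/H_{j})=1$ (signature $(1;3,\stackrel{6}{\ldots},3)$) for $j=1,\dots,10$, $g(S/L_{j})=2$ (signature $(2;3,3,3)$) for $j=1,\dots,15$, and $g(S/R_{j})=3$ (signature $(3;-)$) for $j=1,\dots,5$, so that $10\cdot 1+15\cdot 2+5\cdot 3=55=g$. With all three hypotheses in place, Corollary \ref{coroKR} yields the stated isogeny. The \emph{moreover} clause then follows quickly: each $S_{H_{j}}$ has genus one, so $JS_{H_{j}}$ is an elliptic curve; and each $S_{L_{j}}$ has genus two and inherits the order-three automorphism coming from $H_{0}/L_{j}\cong{\mathbb Z}_{3}$, whose quotient orbifold has signature $(0;3,3,3,3)$, that is, an order-three automorphism with exactly four fixed points, so Lemma \ref{lemag=2} gives $JS_{L_{j}}\cong_{isog.}E_{1,j}\times E_{2,j}$ with $E_{1,j},E_{2,j}$ elliptic.

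The main obstacle is not the isogeny argument itself but the preparatory bookkeeping that precedes it: confirming that the three families indeed exhaust, up to the stated coincidences of generators, all the codimension-one subgroups of $H_{0}$ whose quotient has positive genus, and determining each quotient's signature from the fixed-point structure described in Theorem \ref{cor2}. Once these thirty groups and their signatures are pinned down, so that the genus count closes to exactly $55$, the verification of the Kani--Rosen hypotheses is routine, as sketched above.
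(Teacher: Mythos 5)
Your proof is correct and follows essentially the same route as the paper: you apply Corollary \ref{coroKR} to the same thirty index-three subgroups $H_{1},\dots,H_{10},L_{1},\dots,L_{15},R_{1},\dots,R_{5}$, use the same signature data to get $10\cdot 1+15\cdot 2+5\cdot 3=55=g$, and invoke Lemma \ref{lemag=2} for the genus-two factors. Your uniform observation that any two distinct hyperplanes of $H_{0}\cong{\mathbb F}_{3}^{4}$ must generate all of $H_{0}$ is a cleaner way to verify $g_{K_{i}K_{j}}=0$ than the paper's case-by-case checks of the various products $H_{i}H_{j}$, $L_{i}L_{j}$, $R_{i}R_{j}$, etc., but the argument is otherwise identical.
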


\s

\subsection{Elliptic curves decomposition?}
Can we find parameters $\lambda_{1}$ and $\lambda_{2}$ so that each of the $3$-dimensional Jacobians $JS_{R_{j}}$ is isogenous to the product of three elliptic curves?

Let us assume there is a M\"obius transformation $T$ sending  $\{1,\omega_{5}, \omega_{5}^{2}, \omega_{5}^{3}, \omega_{5}^{4}\}$ into $\{\infty,0,1,\lambda_{1}, \lambda_{2}\}$, where $\omega_{5}=e^{2 \pi i/5}$. For instance, if we take $T(x)=(x-\omega_{5}^{4})(1-\omega_{5})/(x-\omega_{5})(1-\omega_{5}^{4})$, then $T(1)=1$, $T(\omega_{5})=\infty$, $T(\omega_{5}^{4})=0$. So, in this case, we may take 
$$\lambda_{1}=T(\omega_{5}^{2})=-\omega_{5}/(1+\omega_{5}^{2})$$
$$\lambda_{2}=T(\omega_{5}^{4})=-\omega_{5}^{2}/(1+\omega_{5}^{2})(1+\omega_{5})^{2}.$$

With the above parameters one may check that the five genus three Riemann surfaces $S_{R_{j}}$ are isomorphic to
$$y^{3}=(x-1)^{2}(x-\omega_{5})(x-\omega_{5}^{2})(x-\omega_{5}^{3})(x-\omega_{5}^{4}),$$
and the oder three automorphism $\rho_{j}$ is given by $\rho(x,y)=\left(x,e^{2 \pi i/e}y\right)$. Now, Theorem \ref{(3,4)} asserts that
$$JS \cong_{isog.} \prod_{j=1}^{10} JS_{H_{j}} \prod_{j=1}^{15}JS_{L_{j}} \times JS_{R_{1}}^{5}.$$

Also, $S$ now admits a conformal involution given by
$$A([x_{1}:x_{2}:x_{3}:x_{4}:x_{5}])=\left[x_{3}: -x_{4}: (1-\lambda_{1})^{1/3} x_{1}:  (\lambda_{1}-1)^{1/3} x_{2}: \left(\frac{1-\lambda_{1}}{1-\lambda_{2}}\right)^{1/3} x_{5} \right].$$

This involution induces a conformal involution $\tau$ in the genus three Riemann surface $X=S/R_{1}$, with quotient $X/\langle \tau \rangle$ of signature $(1;2,2,2,2)$. Also, one of the fixed points of $\tau$ is also a fixed point of the automorphism of order three $\rho_{1}$ obtained previously; so $\rho\tau$ is an automorphism of order six and $X/\langle \rho\tau \rangle$ has signature $(0;2,3,3,6)$. In the above algebraic model, 
$$\tau(x,y)=\left(\frac{1}{x},\frac{y}{x^{2}}\right).$$

In particular, $S/R_{j}$ can also be described by the curve
$$C: y^{6}=x^{3}(x-1)^{2}(x-\mu)^{2}, \quad \mu=\frac{(\omega_{5}^{2}+1)^{2}}{(\omega_{5}+1)^{4}},$$
and the automorphism of order six is given by 
$$\rho\tau(x,y)=\left(x, e^{\pi i/3}y \right).$$

Question: Is $JC$ isogenous to the product of three elliptic curves?

%%%%%%%%%%%%%%%%%%%%%%%%
%%%%%%%%%%%%%%%%%%%%%%%%
\section{A conjectural picture}
Theorem \ref{(p,n)} states a decomposition for the Jacobian variety of a generalized Fermat curve of type $(p,n)$, where $p$ is a prime integer. What is going on with the case $p$ not a prime?

In the following three examples, in the case of classical Fermat curves, we may see that Theorem \ref{(p,n)} does not hold in general if $p$ is not a prime integer. After these examples we propose a conjectural decomposition result.

\s
\noindent
\begin{example}[Fermat curve $F_{4}$] In this example we observe that Theorem \ref{(p,n)} holds for a generalized Fermat curve of type $(4,2)$, that is, when $S=F_{4}=\{x_{1}^{4}+x_{2}^{4}+x_{3}^{4}=0\} \subset {\mathbb P}^{2}$
is the classical Fermat curve of genus $3$.
If $H<H_{0}=\langle a_{1}, a_{2}\rangle$ is so that $S/H$ has positive genus, then $S/H$ has signature $(1;2,2)$ and $H \cong {\mathbb Z}_{4}$. The collection of such subgroups $H$ 
 is the following:
$$H_{1}=\langle a_{1}a_{2}^{2}\rangle, \;
H_{2}=\langle a_{2}a_{1}^{2}\rangle, \;
H_{3}=\langle a_{3}a_{2}^{2}\rangle.$$

The underlying Riemann surface $S_{H_{j}}$ of genus one admits a conformal automorphism of order four, say $\tau_{j}$, with 
$S_{H_{j}}/\langle \tau_{j} \rangle$ with signature $(0;2,4,4)$. In particular, all of them are isomorphic to the elliptic curve
$$E: y^{2}=x^{4}-1.$$

Since $H_{i}H_{j}=H_{0}$, for $i \neq j$, we may apply Corollary \ref{coroKR} to obtain that
$$JF_{4}\cong_{isog.} E^{3}.$$

\end{example}

\s
\noindent
\begin{example}[Fermat curve $F_{6}$] In this example we observe that Theorem \ref{(p,n)} does not hold for a generalized Fermat curve of type $(6,2)$, that is, when $S=F_{6}=\{x_{1}^{6}+x_{2}^{6}+x_{3}^{6}=0\} \subset {\mathbb P}^{2}$
is the classical Fermat curve of genus $10$.
If $H<H_{0}=\langle a_{1}, a_{2}\rangle$ is so that $S/H$ has positive genus, then either:
\begin{enumerate}
\item $S/H$ has signature $(1;2,2,2)$, $H=H_{1}=\langle a_{1}a_{2}^{-1}, a_{1}^{3}\rangle$ and $S_{H}$ is given by the elliptic curve $E: y^{2}=x^{3}-1$;

\item $S/H$ has signature $(1;2,2,3,3,3)$, $S_{H}$ is also given by $E$, and $H \cong {\mathbb Z}_{6}$ is any of the followings:
$$H_{2}=\langle a_{1}^{3},a_{2}^{2}\rangle, \; 
H_{3}=\langle a_{1}^{3},a_{3}^{2}\rangle, \;
H_{4}=\langle a_{1}^{2},a_{2}^{3}\rangle, \; 
H_{5}=\langle a_{1}^{2},a_{3}^{3}\rangle, \; 
H_{6}=\langle a_{2}^{3},a_{3}^{2}\rangle, \; 
H_{7}= \langle a_{2}^{2},a_{3}^{3}\rangle; 
$$

\item $S/H$ has signature $(2;2,2)$, $S_{H}$ is given by $F: y^{2}=x^{6}-1$, and $H \cong {\mathbb Z}_{6}$ is any of the followings:
$$H_{8}=\langle a_{1}a_{2}^{-1}\rangle, \; 
H_{9}=\langle a_{1}a_{3}^{-1}\rangle, \;
H_{10}=\langle a_{2}a_{3}^{-1}\rangle.
$$

\end{enumerate}

It can be checked that in this case Theorem \ref{(p,n)} fails using the subgroups $H_{1}$,..., $H_{10}$. Anyway, in this case we have that $F_{6}$ admits the symmetric group ${\mathfrak S}_{3}=\langle\tau_{8},\tau_{9},\tau_{10}\rangle$ as a group of conformal automorphisms, where $\tau_{8}$ is involution that conjugates $a_{1}$ to $a_{2}$ and fixes $a_{3}$, $\tau_{9}$ is involution that conjugates $a_{1}$ to $a_{3}$ and fixes $a_{2}$ and $\tau_{10}$ is involution that conjugates $a_{2}$ to $a_{3}$ and fixes $a_{1}$. 
The group $H_{0}$ is normalized by ${\mathfrak S}_{3}$ and, in particular, the three subgroups $H_{8}$, $H_{9}$ and $H_{10}$ are permuted by conjugation of elements in ${\mathfrak S}_{3}$. The stabilizer, in ${\mathfrak S}_{3}$, of $H_{j}$ is $\langle \tau_{j} \rangle$, for $j=8,9,10$.
If we set $K_{8}=\langle H_{8}, \tau_{8}\rangle$, $K_{9}=\langle H_{9},\tau_{9}\rangle$ and $K_{10}=\langle H_{10},\tau_{10}\rangle$, then $S/K_{j}$ is a genus one Riemann surface, the three of them isomorphic to $E$. It follows (using the groups $H_{1}$,..., $H_{7}$, $K_{8}$, $K_{9}$ and $K_{10}$ in Kani-Rosen theorem) that $$JF_{6} \cong_{isog.} E^{10}.$$

The above example was also been obtained by Beauville in \cite{Beauville}.

\end{example}

\s
\noindent
\begin{example}[Fermat curve $F_{8}$] Let us consider the generalized Fermat curve of type $(8,2)$, that is, when $S=F_{8}=\{x_{1}^{8}+x_{2}^{8}+x_{3}^{8}=0\} \subset {\mathbb P}^{2}$
is the classical Fermat curve of genus $21$.
If $H<H_{0}=\langle a_{1}, a_{2}\rangle$ is so that $S/H$ has positive genus and $H_{0}/H \cong {\mathbb Z}_{8}$, then \begin{enumerate}
\item $S/H$ has signature $(2;4,4,4,4)$ and $H$ is one of the following groups:
$$H_{1}=\langle a_{1}^{-2}, a_{1}a_{2}^{4}\rangle, \quad H_{2}=\langle a_{2}^{-2},a_{1}^{4}a_{2}^{-1}\rangle;$$

\item $S/H$ has signature $(3;2,2)$  and $H$ is any of the following:
$$H_{3}=\langle a_{1}^{-1}a_{2}\rangle, \; 
H_{4}=\langle a_{1}^{2}a_{2}^{-1}\rangle, \;
H_{5}=\langle a_{1}a_{2}^{-2}\rangle, $$
$$H_{6}=\langle a_{1}^{-3}a_{2}^{-1}\rangle, \; 
H_{7}=\langle a_{1}^{-3}a_{2}\rangle, \; 
H_{8}= \langle a_{1}a_{2}^{2}\rangle, \; 
H_{9}=\langle  a_{1}^{2}a_{2}  \rangle.
$$

\end{enumerate}

If $j \in \{3,\ldots,9\}$, the quotient $F_{8}/H_{j}$ admits the group $H_{0}/H_{j} \cong {\mathbb Z}_{8}$ as group of conformal automorphisms with quotient having signature $(0;4,8,8)$. It is known that such genus three Riemann surface is given by the hyperelliptic curve
$$E: y^{2}=x^{8}-1.$$

The groups $H_{3}$, $H_{4}$, $H_{5}$, $H_{6}$, $H_{7}$, $H_{8}$ and $H_{9}$ satisfy the conditions of Corollary \ref{coroKR}; so $$JF_{8} \cong_{isog.} JE^{7}.$$

In \cite{Paulhus1} (case $\alpha=0$ in the proof of Theorem 5 for case $D_{8} \times C_{2}$)  it was proved that $JE \cong_{isog.} E_{1}^{2} \times E_{2}$, where 
$$E_{1}: y^{2}=x^{4}+2, \quad E_{2}: v^{2}=u^{4}+1.$$  

Since $E_{1} \cong E_{2}$, by $(x,y)=(\sqrt[4]{2} u, \sqrt{2} v)$, 
$$JF_{8} \cong_{isog.} E_{2}^{21}.$$

\end{example}

\s

The following is a conjectural picture for the general case (see Lemma \ref{conteo}).

\s
\noindent
\begin{conjecture}
Let $k,n \geq 2$ be integers and set $r_{2}=4$ and $r_{k}=3$, for $k \geq 3$. We also assume that $r_{k} \leq n+1$. For each $r\in\{r_{k},\ldots,n+1\}$ we consider the following sets.
$${\mathcal U}_{r,k}=\{(\alpha_{1},\ldots,\alpha_{r}): \alpha_{j} \in {\mathbb Z}_{k}-\{0\}, \quad \alpha_{1}+\cdots+\alpha_{r} \equiv 0 \mod(k)\}/{\mathbb Z}_{k}^{*},$$
where ${\mathbb Z}_{k}^{*}$ denotes the units of ${\mathbb Z}_{k}$ and the action of $u \in {\mathbb Z}_{k}^{*}$ is given by
$$u \cdot (\alpha_{1},\ldots,\alpha_{r})= (u\alpha_{1},\ldots,u\alpha_{r}),$$
and $A_{r}$ is a maximal set of $r$-tuples $(a_{1},\ldots,a_{r}) \in \{\infty,0,1,\lambda_{1},\ldots,\lambda_{n-2}\}^{r}$, $a_{i} \neq a_{j}$, with the property that if $(a_{1},\ldots,a_{r})$ and $(b_{1},\ldots,b_{r})$ are different elements of $A_{r}$, then there is no a M\"obius transformation $T \in {\rm PSL}_{2}({\mathbb C})$ keeping invariant the set 
$\{\infty,0,1,\lambda_{1},\ldots,\lambda_{n-2}\}$ sending the set $\{a_{1},\ldots,a_{r}\}$ onto $\{b_{1},\ldots,b_{r}\}$.

For each $a=(a_{1},\ldots,a_{r}) \in A_{r}$ and each $\alpha=[(\alpha_{1},\ldots,\alpha_{r})] \in {\mathcal U}_{r,k}$ we consider an irreducible component $C_{(a,\alpha)}$ of the (possible reduced) algebraic curve $$E_{(a,\alpha)}: y^{k}=\prod_{j=1}^{r}(x-a_{j})^{\alpha_{j}}$$
and let $A_{(a,\alpha)}=JC_{(a,\alpha)}$.

Then
$$JC^{k}(\lambda_{1},\ldots,\lambda_{n-2}) \cong_{isog.} \prod_{a \in A_{r}, \alpha \in {\mathcal U}_{r,k}, r_{k} \leq r \leq n+1} A_{(a,\alpha)}.$$

\end{conjecture}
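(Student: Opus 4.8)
The plan is to apply Kani--Rosen's Corollary \ref{coroKR} to the family $\{H_R\}$ consisting of all subgroups $H_R<H_0$ with $H_R\cong{\mathbb Z}_p^{\,n-1}$ (equivalently, of index $p$) for which $S/H_R$ has genus at least one, and then to verify the three hypotheses of that corollary. The commutativity hypothesis is immediate, since $H_0\cong{\mathbb Z}_p^{\,n}$ is abelian and so any two of its subgroups commute. For the condition $g_{H_iH_j}=0$, I would observe that each such $H_R$ is a hyperplane of the ${\mathbb F}_p$-vector space $H_0$; hence if $H_i\neq H_j$ are two of them, then $H_iH_j$ strictly contains $H_i$ and therefore $H_iH_j=H_0$, whose quotient orbifold $S/H_0$ is by construction the genus-zero base. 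Thus $g_{H_iH_j}=0$ for $i\neq j$.

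The crux is the genus identity $g=\sum_{H_R} g_{H_R}$, and here I would use the correspondence already prepared in the paragraphs preceding the statement. A hyperplane $H_R$ is the kernel of a surjection $\varphi\colon H_0\to{\mathbb Z}_p$, well defined up to the ${\mathbb Z}_p^{*}$-scaling; since $a_{n+1}=(a_1\cdots a_n)^{-1}$ one has $\sum_{i=1}^{n+1}\varphi(a_i)=0$ in ${\mathbb Z}_p$. The support $\{i:\varphi(a_i)\neq 0\}$, of size $r$, records exactly which of the cone points $\infty,0,1,\lambda_1,\ldots,\lambda_{n-2}$ remain branched in the ${\mathbb Z}_p=H_0/H_R$-cover $S/H_R\to S/H_0$, the complementary $a_i$ lying in $H_R$. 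Using Theorem \ref{cor2} to control which powers of the $a_i$ act with fixed points, Riemann--Hurwitz gives that $S/H_R$ is a cyclic $p$-gonal curve of genus $(r-2)(p-1)/2$, and $r\geq r_p$ (with $r$ even when $p=2$) is precisely the condition making this genus positive.

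For each fixed support of size $r$, the admissible normalized weight tuples $(\alpha_1,\ldots,\alpha_r)=(\varphi(a_i))_{i\in\mathrm{supp}}$ with $\alpha_1=1$ and $\alpha_1+\cdots+\alpha_r\equiv 0\pmod p$ number exactly $\psi_p(r)$ by Lemma \ref{formulita1}, and there are $\binom{n+1}{r}$ supports. Summing over all these subgroups,
$$\sum_{H_R} g_{H_R}=\sum_{r=r_p}^{n+1}\binom{n+1}{r}\,\psi_p(r)\,\frac{(r-2)(p-1)}{2}=1+\frac{\phi(p,n)}{2}=g,$$
where the middle equality is exactly Lemma \ref{conteo}. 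With all three hypotheses verified, Corollary \ref{coroKR} yields $JS\cong_{isog.}\prod_{H_R}JS_{H_R}$.

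It then remains to read off the explicit equation of each $S_{H_R}$: composing the degree-$p^n$ map $\pi$ of the algebraic model with the quotient by $H_R$ realizes $S/H_R$ as $y^{p}=\prod_{j=1}^{r}(x-\mu_j)^{\alpha_j}$, the $\mu_j$ being the chosen branch values and the $\alpha_j$ the monodromy weights $\varphi(a_{i_j})$; the normalizations (i) and (ii) are simply the two cases according to whether $\infty$ occurs among the $\mu_j$. I expect the main obstacle to be the precise verification that these quotients are exactly the claimed cyclic $p$-gonal curves with the stated exponents --- that is, matching the group-theoretic monodromy data of $\varphi$ to the analytic branching of $\pi$ --- together with the bookkeeping that makes the combinatorial genus count over supports and weight tuples line up with the closed form supplied by Lemma \ref{conteo}.
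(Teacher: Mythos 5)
What you have written is, in essence, the paper's proof of Theorem \ref{(p,n)}, which treats the case where the degree is a \emph{prime} $p$. The statement you were asked to prove, however, is the closing Conjecture, which concerns an arbitrary integer $k\geq 2$ and is left open by the authors precisely because the prime-case argument does not extend. Two steps of your proposal break down for composite $k$. First, the identification of the relevant subgroups: for composite $k$ the subgroups $H<H_{0}\cong{\mathbb Z}_{k}^{n}$ with $S/H$ of positive genus are not only the kernels of surjections $H_{0}\to{\mathbb Z}_{k}$ (your ``hyperplanes''); moreover your branching analysis leans on part (v) of Theorem \ref{cor2}, which is stated only for prime $p$ --- for composite $k$ a fixed-point-free element of $H_{0}$ can have a non-trivial power with fixed points, so the signatures of the quotients $S/H$ are not what your Riemann--Hurwitz computation assumes. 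Relatedly, the normalization $\alpha_{1}=1$ and the count $\psi_{k}(r)$ from Lemma \ref{formulita1} no longer enumerate the ${\mathbb Z}_{k}^{*}$-classes of weight tuples when $k$ is composite, since not every non-zero residue is a unit and tuples with $\gcd(\alpha_{1},\ldots,\alpha_{r},k)>1$ give reducible curves.

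Second, and decisively, the genus identity $g=\sum_{H}g_{H}$ required by Corollary \ref{coroKR} simply fails for composite $k$: the paper's own example of the classical Fermat curve $F_{6}$ (genus $10$) exhibits ten subgroups with positive-genus quotients whose genera sum to $1+6\cdot 1+3\cdot 2=13>10$, and the authors note explicitly that the decomposition of Theorem \ref{(p,n)} fails there. To salvage a decomposition for $F_{6}$ they must enlarge three of these subgroups by extra automorphisms coming from the symmetric group ${\mathfrak S}_{3}$ permuting the coordinates; this is exactly why the Conjecture's statement quotients the index set $A_{r}$ by M\"obius transformations preserving the branch set and passes to irreducible components of the possibly reducible curves $y^{k}=\prod_{j}(x-a_{j})^{\alpha_{j}}$. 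Your proposal addresses neither of these features, so it establishes only the prime case already proved as Theorem \ref{(p,n)} and does not prove (nor could it, by the $F_{6}$ obstruction, be routinely adapted to prove) the Conjecture.
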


\s

%%%%%%%%%%%%%%
%%%%%%%%%%%%%%%
%\section{Acknowledgments}
%We would like to thank the referee for her/his valuable comments which improved the presentation of the paper and saved us from several  mistakes.

%%%%%%%%%%%%

\end{document}